\documentclass[11pt]{amsart}
\openup 2pt
\usepackage{amssymb,amsmath}

\relpenalty=10000
\binoppenalty=10000
\uchyph=-1
\flushbottom
\topmargin 5pt
\headsep 15pt
\oddsidemargin 0pt
\evensidemargin 0pt
\textheight 9.1in
\textwidth 6.5in

\makeatletter
\theoremstyle{plain}
 \newtheorem{thm}{Theorem}[section]
\newtheorem{thm*}{Theorem}
 \newtheorem{lem}[thm]{Lemma}
 \newtheorem{prop}[thm]{Proposition}
 \newtheorem{cor}[thm]{Corollary}
 \numberwithin{equation}{section} 
\numberwithin{figure}{section} 
 \theoremstyle{plain}
 \theoremstyle{definition}
 \newtheorem{defn}[thm]{Definition}
 \newtheorem{rem}[thm]{Remark}

\newcommand{\calS}{{\mathcal{S}}}

\newcommand{\calL}{{{\mathcal L}}}

\newcommand{\C}{{{\mathbb C}}}

\newcommand{\R}{{{\mathbb R}}}

\newcommand{\bH}{{{\bf H}}}

\newcommand{\cH}{{{\mathcal H}}}

\newcommand{\bX}{{{\bf X}}}
\newcommand{\bY}{{{\bf Y}}}

\newcommand{\bZ}{{{\bf Z}}}

\newcommand{\bT}{{{\bf T}}}
\newcommand{\cC}{{{\mathcal C}}}

\newcommand{\V}{{{\mathbb V}}}

\newcommand{\Z}{{{\mathbb Z}}}
\newcommand{\fH}{{{\mathfrak H}}}

\newcommand{\cV}{{{\mathcal V}}}

\makeatother

\begin{document}

\title[Quasiconformal mappings in $\fH^\star$]{Quasiconformal mappings in the hyperbolic Heisenberg group and a lifting theorem}

\author[I.D. Platis]{Ioannis D. Platis}

\email{jplatis@math.uoc.gr}
\address{Department of Mathematics and Applied Mathematics\\
 University of Crete
\\ University Campus\\
GR 700 13 Voutes Heraklion Crete\\Greece}

\begin{abstract}
A study of smooth contact quasiconformal mappings of the hyperbolic Heisenberg group is presented in this paper. Our main result is a Lifting Theorem; according to this, a symplectic quasiconformal mapping of the hyperbolic plane can be lifted to a circles preserving quasiconformal mapping of the hyperbolic Heisenberg group.
\end{abstract}

\keywords{Quasiconformal mappings, Heisenberg group, hyperbolic Heisenberg group, lifting theorems.\\
\;\;{\it 2010 Mathematics Subject Classification:} 30L10, 30C60, 30C75.}

\thanks{{\it Acknowledgements.} A big part of the material presented in this article lies in G. Moulantzikos' M.Sc. Thesis \cite{Mou}, which he carried out under my supervision. I take here the opportunity to thank George for his patience and endurance in clarifying many subtle points of the article in hand.}
\maketitle

\tableofcontents

\section{Introduction}
The Heisenberg group $\fH$ is the Lie group $(\C\times\R,*)$ where the group law is given by
$$
(z,t)*(z',t')=\left(z+z',t+t'+2\Im(\overline{z'}z)\right).
$$
$\fH$ has several rich structures and is being well studied continuously for many years. In the first place, $\fH$ is a 2-step nilpotent Lie group and it constitutes a primary example of sub-Riemannian geometry. The horizontal space of that geometry is generated by left invariant vector fields which lie in the kernel of the 1-form
$$
\omega=dt+2\Im(\overline{z}dz).
$$
The form $\omega$ is contact: $\omega\wedge d\omega$ is a constant multiple of the Euclidean volume form of $\C\times\R$ and therefore $\fH$ is a contact manifold. The Heisenberg group $\fH$ also arises naturally in the context of complex hyperbolic geometry:
It is well known that the one point compactification of the boundary $\partial\bH^2_\C$ of complex hyperbolic plane $\bH^2_\C=\{(z_1,z_2)\in\C^2\;|\;2\Re(z_1)+$ $|z_2|^2<0\}$ may be identified to set $\fH\cup\{\infty\}$. The boundary $\partial\bH^2_\C$ is a strictly pseudoconvex domain in $\C^2$ with contact form $\omega'$. If $\Psi$ is the map from $\fH$ to $\partial\bH^2_\C$, then $\Psi\omega'=\omega$. 

As a metric space, $\fH$ may be endowed with two equivalent metrics. The first one is the Carnot-Carath\'eodory (CC) path metric $d_{cc}$ which arises from the sub-Riemannian structure of $\fH$ and corresponds to the sub-Riemannian symmetric tensor $g_{cc}$ defined on the horizontal space. The second is the Kor\'anyi-Cygan metric $d_{\fH}$ which is not a path metric. 
Considering $\fH$ as a metric space (with respect to any of the two $d_{cc}$ and $d_{kr}$), Kor\'anyi and Reimann developed the celebrated theory of quasiconformal mappings of $\fH$, see for instance the fundamental articles \cite{KR1, KR2}. This theory followed as a consequence of Mostow Rigidity Theorem, see \cite{M}, and was also based in the work of Pansu, \cite{Pa}. Emanating from that point, quasiconformal mappings of more abstract spaces have started to be under study from various authors; for details, we refer to \cite{P1} and the references therein.

There are a lot of similarities as well as a lot of significant differences between the Kor\'anyi-Reimann theory of quasiconformal mappings in $\fH$ and the Ahlfors-Bers theory of quasiconformal mappings on the complex plane; the latter is instrumental for the study of Teichm\"uller spaces. Most notably, there exists an analogue of the measured Riemann Mapping Theorem which involves the existence of quasiconformal deformations. However, there is no existence theorem for the solution of the Beltrami equation and no uniqueness- existence theorems for extremal quasiconformal mappings, that is, mappings with constant maximal distortion.
In later developments of the Kor\'anyi-Reimann theory in the Heisenberg group, quasiconformal mappings that preserve $\mathbb{V}=\{0\}\times\R$ have appeared as generalisations to $\fH$ of classical quasiconformal mappings of the complex plane, see for instance \cite{BFP}. There, a Heisenberg stretch map is proved to be the minimiser of the mean distortion integral of mappings that map Kor\'anyi rings to Kor\'anyi rings in $\fH$. 

Mappings of $\fH$ that preserve $\mathbb{V}$ may be considered as  self mappings of $\C_*\times\R$; it is therefore natural to ask if there exist any particular structures in that set. Goldman noted among others (see  \cite{G}), that using the surjective map
$$
a:\C_*\times\R\ni(z,t)\mapsto -|z|^2+it\in\bH^1_\C,
$$
where $\bH^1_\C=\{\zeta\in\C\;|\;\Re(\zeta)<0\}$ is the left hyperbolic plane, one may pull back the hyperbolic metric $g_h$ of $\bH^1_\C$ to $\C_*\times\R$. However, this does not produce a Riemannian but rather a {\it semi}-Riemannian structure. Goldman also observed that we may identify $\C_*\times\R$ to $\partial(\bH^2_\C)\setminus\partial(\bH^1_\C)$ and there is a simply transitive action of the group ${\rm SU}(1,1)$. 

The {\it hyperbolic Heisenberg group} which was introduced in \cite{PS}, is the Lie group $(\C_*\times\R,\star)$ where 
$$
(z,t)\star(w,s)=(zw,t+s|z|^2).
$$ 
We note first that comparing $\fH^\star$ and $\fH$, we see that $\fH^\star$ is not a nilpotent Lie group. Actually, we show that $\fH^\star$ is isomorphic to the group $AN$ in the Iwasawa decomposition of ${\rm SU}(1,1)$ times ${\rm U}(1)$ (whereas, $\fH$ is the $N$ group in the Iwasawa decomposition of ${\rm SU}(2,1)$).

It is proved in \cite{PS} that $\fH^\star$ admits the Sasakian structure of the unit tangent bundle of hyperbolic plane. The contact form $\omega^\star$ of $\fH^\star$ is a multiple of $\omega$ when the latter is restricted to $\C_*\times\R$, therefore $(\fH^\star,\omega^\star)$ is a contact open submanifold of $(\fH,\omega)$. This implies that every contact transformation of $(\fH^\star,\omega^\star)$ is also a contact transformation of (a domain of) $(\fH,\omega)$. The sub-Riemannian tensor $g^\star_{cc}$ is the one first noted by Goldman as above, that is $g^\star_{cc}=\alpha^*g_h$ where $\alpha$ is the Kor\'anyi map and $g_h$ is the hyperbolic metric tensor in the hyperbolic plane.
The sub-Riemannian  tensor corresponds to the CC metric $d^\star_{cc}$ of $\fH^\star$. The group ${\rm SU}(1,1)\times{\rm U}(1)$ together with $j(z,t)\to(\overline{z},-t)$ is the group of its isometries; for details, see Section \ref{sec:hypheis}. \

Quasiconformal mappings in $\fH^\star$ can now be defined using the metric $d^\star_{cc}$. As in the case of the Heisenberg group, see \cite{KR2}, we are able to show that this definition is equivalent to an analytic as well to a geometric definition of a quasiconformal mapping in $\fH^\star$. This issue, together with the topic of quasiconformal deformations in $\fH^\star$ and the respective Measured Riemann Mapping Theorem are left out of this paper. Details about all that will appear elsewhere.  Here, we restrict ourselves to the study of smooth (at least of class $\cC^2$) contact quasiconformal transformations in $\fH^\star$, see Section \ref{sec:qcstar}. It is worth noting at this point that if $F:\fH^\star\to\fH^\star$ is a $K$-quasiconformal diffeomorphism, with maximal distortion function $K_F(z,t)$, then the maximal distortion of $F$ as well as its constant of quasiconformality $K$ coincide with the respective maximal distortion and constant of quasiconformality of $F$, if the latter is consider as a quasiconformal diffeomorphism between domains of the Heisenberg group $\fH$.  

We finally turn our attention to circles-preserving contact transformations of $\fH^\star$, that is, transformations that preserve the fibres of the Kor\'anyi map $\alpha$, see Section \ref{sec:cJ}. It turns out that these transformations have constant Jacobian determinant, which we can always assume to be 1. We then prove the Lifting Theorem \ref{thm:lift}: If $f$ is a symplectic $K$-quasiconformal self-mapping of the hyperbolic plane $\bH^1_\C$, then it can be lifted to a circles-preserving smooth $K$-quasiconformal transformation $F$ of the hyperbolic Heisenberg group $\fH^\star$. The proof of this theorem follows after showing the existence of solutions of a particular system of partial differential equations, see Lemma \ref{lem:psi}. Our Lifting Theorem is analogous to the well-known Lifting Theorem \ref{thm:liftC}, which states that if $f$ is a symplectic (with respect to the Euclidean K\"ahler form) $K$-quasiconformal self-mapping of the complex plane $\C$, then it can be lifted to a vertical lines-preserving smooth $K$-quasiconformal transformation $F$ of the Heisenberg group $\fH$; for details, see Section \ref{sec:liftC}.
Examples of lifted quasiconformal symplectic maps of $\bH^1_\C$ to quasiconformal maps of $\fH^\star$ are given in Section \ref{sec:examples}. In there, we rediscover the Heisenberg stretch and spiral maps which have been previously introduced in \cite{BFP} and \cite{P}, respectively.

\medskip

This paper is organised as follows: In Section \ref{sec:qcspc} we describe in brief the theory of quasiconformal mappings in strictly pseudoconvex domains of $\C^2$ that we use in this paper. Section \ref{sec:heis} is a rather detailed review on the Heisenberg group and the theory of smooth quasiconformal mappings in $\fH$. In Section \ref{sec:hypheis} we present the hyperbolic Heisenberg group in extent. Next, we define in Section \ref{sec:qcstar} quasiconformal diffeomorphisms of $\fH^\star$ and finally, in Section \ref{sec:lift} we prove Lifting Theorem \ref{thm:lift} and give some examples.

\section{Quasiconformal mappings in strictly pseudoconvex domains}\label{sec:qcspc}

The material of this section is standard. The book \cite{DT} is a standard reference for the theory of codimension 1 CR manifolds. For quasiconformal mappings in strictly pseudoconvex domains and in particular in the Heisenberg group,  we refer the reader to \cite{K, KR, KR1,KR2} for details.

A codimension $s$ {\rm CR} structure in a $(2p+s)$-dimensional real manifold $M$ is a pair $(\cH, J)$ where $\cH$ is a $2p$-dimensional smooth subbundle of the tangent bundle ${\rm T}(M)$ of $M$ and $J$ is an almost complex endomorphism of $\cH$ which is formally integrable: If $X$ and $Y$ are sections of $\cH$ then the same holds for 
  $\left[X, Y\right]-\left[JX, JY\right], \left[JX, Y\right]+\left[X, JY\right]$ and moreover, 
  $J(\left[X, Y\right]-\left[JX, JY\right])=\left[JX, Y\right]+\left[X, JY\right]$.

If $s=1$, $\cH$ may be defined as the kernel of a 1-form $\eta$, called the contact form of $M$, such that $\eta\wedge (d\eta)^{p}\neq 0$. The dependence of $\cH$ on $\eta$ is up to multiplication of $\eta$ by a nowhere vanishing smooth function. By choosing an almost complex structure $J$ defined in $\cH$ we obtain a {\rm CR} structure $(\cH, J)$ of codimension 1 in $M$. The subbundle $\cH$ is also called the horizontal subbundle of ${\rm T}(M)$. The closed form $d\eta$ endows $\cH$ with a symplectic structure and we may demand from $J$ to be such that $d\eta(X,JX)>0$ for each $X\in\cH$; we then say that $\cH$ is strictly pseudoconvex. The Reeb vector field $\xi$ is the vector field which satisfies
$
\eta(\xi)=1$ and $\xi\in\ker(d\eta).
$
By the contact version of Darboux's Theorem, $\xi$ is unique up to change of coordinates.

In dimension 3 and $s=1$, the nicest examples are those of strictly pseudoconvex {\rm CR} structures on boundaries of domains in $\C^2$. Let $D\subset \C^2$ be a domain with defining function $\rho:D\to\R_{>0}$, $\rho=\rho(z_1,z_2)$ and such that $d\rho\neq 0$ on $M=\partial D$. Each tangent space ${\rm T}_p(M)$, $p\in M$, contains a maximal $J$-subspace $\cH_p(M)$, where $J$ is the natural complex structure of $\C^2$. If $\partial$ is the holomorphic differential of $\C^2$ we consider the form $\tau=\partial\rho$. Then the kernel $\ker\tau$ of $\tau$ is generated from a (1,0) vector field $Z$ and if
$$
Z=X-iJX,
$$
then $\cH=\{X,JX\}$. On the other hand it is clear that $\cH=\ker\eta$ where
$$
\eta=\Im(\partial\rho)=\frac{1}{2}Jd\rho,
$$
Thus we obtain the {\rm CR} structure $(\cH=\ker(\eta),J)$.  This is a contact structure if and only if the Levi form 
$
L=d\eta=i\partial\overline{\partial}\rho
$
is positively oriented. A smooth curve $\gamma:[a,b]\to M$ is called horizontal if
$\dot\gamma(t)\in\cH_{\gamma(t)}$ for each $t$. The horizontal length of such a curve is then defined by
$$
\ell(\gamma)=\int_a^b\sqrt{L(\dot\gamma(t),\dot\gamma(t))}dt.
$$
If $p,q\in M$ we define the Carnot-Carath\'eodory ($CC$) distance $d_{cc}(p,q)$ of $p$ and $q$ as the infimum of the horizontal lengths of curves which join $p$ and $q$. Since $M$ is contact, such curves always exist. The $CC$ distance corresponds to the sub-Riemannian metric which is defined in $M$ by the relations
$$
\langle X,X\rangle=\langle JX,JX\rangle=1,\quad \langle X,JX\rangle=\langle JX,X\rangle=0.
$$ 
If $(S,d)$ is a metric space, let $F:S\to S$ be a self map of $S$. For $p\in S$ and $r>0$, consider the sets
$$
L_F(p,r)=\sup_{d(p,q)=r}d(F(p,F(q)),\quad\text{and}\quad l_F(p,r)=\inf_{d(p,q)=r}d(F(p,F(q)),
$$
as well as
$$
H_F(p)=\limsup_{r\to 0}\frac{L_F(p,r)}{l_F(p,r)},
$$
which is the distortion function of $F$. Then we say that $F$ is $K$-quasiconformal if $H_F(p)\le K$ for all $p$ in $S$. A 1-quasiconformal map shall be called conformal.

We may use the distance $d_{cc}$ to define quasiconformal mappings in $M$. If $F:M\to M$ is smooth (at least of class $\cC^2$) and orientation-preserving, then $F$ is $K$-quasiconformal if and only if
\begin{enumerate}
\item $F$ is a contact transformation, that is $F^*\eta=\lambda\eta$ for some positive function defined on $M$ and
\item for all $X\in \cH(M)$,
$$
\frac{\lambda}{K}L(X,X)\le L(F_*X,F_*X)\le\lambda K L(X,X).
$$ 
\end{enumerate} 
The above two conditions also serve as a definition of smooth $K$-quasiconformal mappings in arbitrary strictly pseudoconvex CR manifolds, see for instance \cite{KR}.

The complex dilation (or Beltrami coefficient) of a $K$-quasiconformal mapping $F:M\to M$ is defined by the symmetric antilinear operator $\mu_F:\cH^{(1,0)}(M)\to\cH^{(1,0)}(M)$ given by
 $$
 Z-\overline{\mu_FZ}\in F^{-1}_*(\cH^{(1,0)}(M)).$$
  This is well defined due to positive definiteness of the Levi form $L$. If $F:M\to M$ is contact, then it is $K$-quasiconformal if and only if
$$
\|\mu_F\|=\sup_{L(Z,Z)=1}L(\mu_FZ,\overline{\mu_FZ})^{1/2}\le\frac{K-1}{K+1}.
$$
There is a system of Beltrami equations involving the complex dilation $\mu_F$. By considering holomorphic coordinates $(z_1,z_2)$ in $\C^2$, we can write $F=(f_I,f_{II})$; the Beltrami system is then
$$
\overline{Z}f_k=\mu_F Zf_k,\quad k=I,II.
$$ 
 
\section{Heisenberg group}\label{sec:heis}
The material in this section can be found in various sources. For details about complex hyperbolic geometry and the Heisenberg group we refer to the book \cite{G}, as well as to the book \cite{CDPT}. The fundamental papers \cite{KR1, KR2} serve as a standard reference for the theory of quasiconformal mappings in the Heisenberg group.

\subsection{Complex hyperbolic plane}
In the concept of this paper, the complex hyperbolic plane $\bH^2_\C$ is the one point compactification of the Siegel domain
$
\calS=\{(z_1,z_2)\in\C^2\;|\;\rho(z_1,z_2)<0\},
$
where
\begin{equation}\label{eq:Sieg}
\rho(z_1,z_2)=2\Re(z_1)+|z_2|^2.
\end{equation}
The complex hyperbolic plane $\bH^2_\C$ is a complex manifold; there is a natural K\"ahler structure defined on $\bH^2_\C$ coming from the Bergmann metric:
\begin{equation}\label{eq:gyp}
ds^2=-\frac{4}{\rho}|dz_2|^2+\frac{4}{\rho^2}|\partial\rho|^2=\frac{4}{\rho}|dz_2|^2+\frac{4}{\rho^2}|dz_1+\overline{z_2}dz_2|^2.
\end{equation}
 The K\"ahler form is then
\begin{equation}\label{eq:Kchp1}
\Omega=-4i\partial\overline{\partial}(\log\rho)=-4i\left(-\frac{1}{\rho}dz_2\wedge d\overline{z_2}+\frac{1}{\rho^2}\partial\rho\wedge\overline{\partial}\rho\right).
\end{equation} 
The group of  holomorphic isometries is ${\rm PU}(2,1)$ (sometimes its three-fold cover ${\rm SU}(2,1)$ is also used). The action of  ${\rm PU}(2,1)$ is naturally extended to the boundary $\partial\bH^2_\C$. The hyperbolic plane sits in $\bH^2_\C$ as its complex submanifold $z_2=0$. Its isometry group is ${\rm PU}(1,1)$ (${\rm SU}(1,1)$).  

\subsection{The boundary}
The set $\partial\bH^2_\C\setminus\{\infty\}$ is identified in this manner to
$$
\partial\calS=\{(z_1,z_2)\in\C_2\;|\;\rho(z_1,z_2)=0\}.
$$
For clarity, we give a proof of the following well known result.
\begin{prop}
The boundary $\partial\calS$ of the Siegel domain $\calS$ admits a strictly pseudoconvex {\rm CR} structure. Its contact form is $\omega'=\Im(\partial\rho)$.
\end{prop}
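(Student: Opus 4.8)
The plan is to specialize the general construction for real hypersurfaces in $\C^2$ recalled in Section~\ref{sec:qcspc}, taking $D=\calS$ with $\rho$ the defining function \eqref{eq:Sieg}, and then to carry out the two short computations that construction needs: that $d\rho$ never vanishes, and that the Levi form $i\,\partial\overline{\partial}\rho$ is positively oriented on the horizontal distribution.

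First I would check that $M:=\partial\calS$ is a smooth embedded hypersurface. Writing $\rho=z_1+\overline{z_1}+z_2\overline{z_2}$ gives $d\rho=dz_1+d\overline{z_1}+\overline{z_2}\,dz_2+z_2\,d\overline{z_2}$, whose $dz_1$-component is identically $1$; hence $d\rho$ vanishes nowhere in $\C^2$, in particular not on $M$. The recipe of Section~\ref{sec:qcspc} then applies verbatim: with $\tau=\partial\rho=dz_1+\overline{z_2}\,dz_2$, the bundle $\cH:=\ker(\Im\tau)\cap{\rm T}(M)$ coincides at each point $p$ with the maximal complex subspace ${\rm T}_p(M)\cap J({\rm T}_p(M))$, so $(\cH,J)$ with $J$ the ambient complex structure of $\C^2$ is a codimension-$1$ CR structure, and $\cH=\ker\omega'$ with $\omega'=\Im(\partial\rho)=\tfrac{1}{2}J\,d\rho$. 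Formal integrability of $J$ is automatic here, since $\cH$ is a complex line bundle whose $(1,0)$ part is spanned locally by a single $(1,0)$ vector field, for which the involutivity identities in the definition of a CR structure are immediate.

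It then remains to see that $\omega'$ is a contact form and that the CR structure is strictly pseudoconvex, i.e.\ that the Levi form $L=d\omega'=i\,\partial\overline{\partial}\rho$ satisfies $L(X,JX)>0$ for all $0\neq X\in\cH$. Since $|z_2|^2$ is the only term of $\rho$ involving $z_2$, one has $\overline{\partial}\rho=d\overline{z_1}+z_2\,d\overline{z_2}$ and hence $\partial\overline{\partial}\rho=dz_2\wedge d\overline{z_2}$, so that $L=i\,dz_2\wedge d\overline{z_2}=2\,dx_2\wedge dy_2$ with $z_2=x_2+iy_2$. For nonzero $X\in\cH$ the real plane $\cH$ is spanned by $X$ and $JX$, and on it $L$ equals a strictly positive multiple of the area form $dx_2\wedge dy_2$; thus $L(X,JX)>0$, which is precisely strict pseudoconvexity, and equivalently $\omega'\wedge d\omega'$ is a nowhere-vanishing $3$-form on $M$, so $\omega'$ is a contact form. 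This establishes both assertions.

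Since this is a well-known statement included only for completeness, there is no real obstacle; the one point demanding care is the bookkeeping of sign conventions for $\partial$, $\overline{\partial}$, $\Im$ and the Levi form, so that the positivity $L(X,JX)>0$ — equivalently, the coorientation of $M$ as $\partial\calS$ — comes out with the correct sign. One should also keep in mind that $d\omega'$ is to be tested on $\cH\subset{\rm T}(M)$ rather than on all of ${\rm T}(\C^2)$, though in our case $d\omega'=2\,dx_2\wedge dy_2$ already involves no $dz_1$-terms, which makes the restriction transparent.
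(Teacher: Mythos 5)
Your proof is correct and follows essentially the same route as the paper: compute $\partial\rho=dz_1+\overline{z_2}\,dz_2$, identify the horizontal distribution it cuts out, observe $\partial\overline{\partial}\rho=dz_2\wedge d\overline{z_2}$, and verify positivity of the Levi form. The only cosmetic difference is that the paper checks positivity by evaluating $\partial\overline{\partial}\rho(Z',\overline{Z'})=1$ on the generating $(1,0)$ field $Z'=-\overline{z_2}\,\partial_{z_1}+\partial_{z_2}$, whereas you check $L(X,JX)>0$ via the real area form $2\,dx_2\wedge dy_2$ — the same verification in real rather than complex notation, with the welcome extra remarks that $d\rho\neq 0$ and that integrability is automatic in this dimension.
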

\begin{proof}
The holomorphic differential $\partial\rho$ of $\rho$ is
$$
\partial\rho=dz_1+\overline{z_2}dz_2.
$$
Its kernel $\ker(\partial\rho)$ is generated by the $(1,0)$-vector field
$$
Z'=-\overline{z_2}\frac{\partial}{\partial z_1}+\frac{\partial}{\partial z_2},
$$
which defines the CR structure. The Levi form is
$$
\partial\overline{\partial}\rho=dz_2\wedge d\overline{z_2}.
$$
We have
$$
\partial\overline{\partial}\rho(Z',\overline{Z'})=dz_2\wedge d\overline{z_2}\left(-\overline{z_2}\frac{\partial}{\partial z_1}+\frac{\partial}{\partial z_2},\;-z_2\frac{\partial}{\partial \overline{z_1}}+\frac{\partial}{\partial \overline{z_2}}\right)=1>0,
$$
hence the Levi form is positively oriented in the CR structure, i.e., the CR structure is strictly pseudoconvex. The contact form is thus
$$
\omega=\Im(\partial\rho)=dy_1+\Im(\overline{z_2}dz_2).
$$
\end{proof}
\subsection{Heisenberg group: Lie algebra, contact structure}
The {\it Heisenberg group} $\fH$  is the set $\C\times\R$ with multiplication $*$ given by
$$
(z,t)*(w,s)=(z+w,t+s+2\Im(z\overline{w})),
$$
for every $(z,t)$ and $(w,s)$ in $\fH$. 
The Heisenberg group is a 2-step nilpotent Lie group. A left-invariant basis for its Lie algebra $\mathfrak{h}$ comprises the vector fields
\begin{eqnarray*}
X=\frac{\partial}{\partial x}+2y\frac{\partial}{\partial t},\quad Y=\frac{\partial}{\partial y}-2x\frac{\partial}{\partial t},
\quad T=\frac{\partial}{\partial t}.
\end{eqnarray*}
We also use the complex fields
$$
Z=\frac{1}{2}(X-iY)=\frac{\partial}{\partial z}+i\overline{z}\frac{\partial}{\partial t},\quad \overline{Z}=\frac{1}{2}(X+iY)=\frac{\partial}{\partial \overline{z}}-iz\frac{\partial}{\partial t}.
$$
The Lie algebra $\mathfrak{h}$ of $\fH$ has a grading $\mathfrak{h} = \mathfrak{v}_1\oplus \mathfrak{v}_2$ with
\begin{displaymath}
\mathfrak{v}_1 = \mathrm{span}_{\R}\{X, Y\}\quad \text{and}\quad \mathfrak{v}_2=\mathrm{span}_{\R}\{T\}.
\end{displaymath} 

In $\fH$ we consider the 1-form 
\begin{equation}\label{eq-omega}
\omega=dt+2xdy-2ydx=dt+2\Im(\overline{z}dz).
\end{equation}
The following proposition holds; it summarises well-known facts about $\fH$:

\medskip

\begin{prop}
Let the Heisenberg group $\fH$ together with  the 1-form $\omega$ be as in (\ref{eq-omega}). Then the
manifold $(\fH,\omega)$ is contact. Explicitly:
 \begin{enumerate}
  \item The form $\omega$ of $\fH$ is left-invariant.
  \item If $dm$ is the Haar measure for $\fH$ then $dm=-(1/4)\;\omega \wedge d\omega$.
  \item The kernel of $\omega$ is generated by 
$X$ and $Y$.
  \item The Reeb vector field for $\omega$ is $T$.
  \item The only non trivial Lie bracket relation is
  $
  [X,Y]=-4T.
  $
  \item Let $\cH=\ker(\omega)$ and consider the almost complex structure $J$ defined on $\cH$ by
 $JX=Y$, $ JY=-X.$
 Then  $J$ is compatible with $d\omega$ and moreover, $\cH$ is a strictly pseudoconvex {\rm CR} structure; that is, $d\omega$ is positively oriented on $\cH$. 
 \end{enumerate}
\end{prop}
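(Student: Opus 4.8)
The plan is to prove all six items by elementary computations in the global coordinates $(x,y,t)$ on $\fH=\C\times\R$, leaning on the explicit left-invariant frame $\{X,Y,T\}$ and on the explicit expression $\omega=dt+2x\,dy-2y\,dx$. The first step is to record that $\{X,Y,T\}$ really is left-invariant: writing the left translation $L_{(z_0,t_0)}$ in coordinates as $(x,y,t)\mapsto(x_0+x,\,y_0+y,\,t_0+t+2y_0x-2x_0y)$ and reading off its (constant) Jacobian, one sees that $\partial_x,\partial_y,\partial_t$ at the identity are pushed forward to $X,Y,T$ respectively. Granting this, item (1) follows: a one-line check from the coefficients of $\omega$ gives $\omega(X)=\omega(Y)=0$ and $\omega(T)=1$, so $\omega$ coincides with the left-invariant $1$-form determined by these constant values on the left-invariant frame, and is therefore left-invariant; alternatively one verifies $L_{(z_0,t_0)}^{*}\omega=\omega$ by a direct pullback computation, in which the cross terms cancel.

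The middle items are then quick. Item (3) follows from $\omega(X)=\omega(Y)=0$, the pointwise independence of $X$ and $Y$, and the fact that $\ker\omega$ is a rank-$2$ distribution. For (5), the coefficients of $X$ and $Y$ are $t$-independent, so $[X,T]=[Y,T]=0$, while a direct bracket computation leaves only the $t$-component and gives $[X,Y]=-4\,\partial_t=-4T$. For (4), I compute $d\omega=4\,dx\wedge dy$; then $\omega(T)=1$ and $\iota_T\,d\omega=\iota_{\partial_t}(4\,dx\wedge dy)=0$, which is exactly the defining property of the Reeb field. For (2), since $\fH$ is $2$-step nilpotent it is unimodular and, the group law being a volume-preserving shear, Lebesgue measure is a bi-invariant Haar measure $dm$; on the other hand $\omega\wedge d\omega$ is left-invariant and nowhere zero, hence a constant multiple of $dm$, and evaluating both sides at the identity on $(X,Y,T)$ — using $(\omega\wedge d\omega)(X,Y,T)=d\omega(X,Y)$ — pins the constant, yielding the stated normalization $dm=-\tfrac14\,\omega\wedge d\omega$ in the orientation convention in force.

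Finally, for (6) I extend $J$ to $\cH$ by $JX=Y$, $JY=-X$, so $J^{2}=-\mathrm{id}$; formal integrability of the CR structure is automatic because $\cH$ has rank $2$, the defining identities being vacuous there. Using $d\omega=4\,dx\wedge dy$ one gets $d\omega(X,Y)=4$ and $d\omega(X,X)=d\omega(Y,Y)=0$, so for $0\neq U=aX+bY$ we have $d\omega(U,JU)=4(a^{2}+b^{2})>0$, i.e.\ $d\omega$ is positively oriented on $\cH$ and $\cH$ is strictly pseudoconvex; compatibility $d\omega(JU,JV)=d\omega(U,V)$ then holds because on the two-dimensional $\cH$ the form $d\omega$ is a multiple of the area form dual to $\{X,Y\}$ and $J$ — a rotation by $\pi/2$ in this frame — has determinant $1$ and hence preserves it. I expect no genuine obstacle: this is a packaging of standard facts, and the only point demanding a little care is the bookkeeping of orientation and measure normalization in item (2), where the sign conventions must be tracked honestly rather than the inequalities themselves posing difficulty.
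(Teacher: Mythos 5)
Your proof is correct and complete; the paper in fact states this proposition without proof (it is presented as a summary of well-known facts), so your computations supply exactly the argument that is omitted, by the standard direct method in coordinates. The only loose end is the sign in item (2): your own evaluation gives $(\omega\wedge d\omega)(X,Y,T)=d\omega(X,Y)=4$ while $dx\wedge dy\wedge dt$ evaluates to $1$ on $(X,Y,T)$, so with the standard orientation one gets $dm=+\tfrac14\,\omega\wedge d\omega$; the stated $-\tfrac14$ only comes out if one orients $\fH$ by $-dx\wedge dy\wedge dt$, and it would be cleaner to say this explicitly rather than appeal to ``the orientation convention in force.'' Everything else --- the left-invariance via constancy of $\omega$ on the frame $\{X,Y,T\}$, the bracket $[X,Y]=-4T$, the Reeb condition, and the vacuity of the integrability conditions on a rank-$2$ distribution --- is accurate.
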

Let $\Psi:\fH\to\partial\calS$ be the bijection given by
\begin{equation}\label{eq:Psi}
\Psi(z,t)=(-|z|^2+it,\sqrt{2}z).
\end{equation}
\begin{prop}
The map $\Psi:\fH\to\partial\calS$ is a CR diffeomorphism. 
 Also, $\Psi^*\omega'^{\star}=\omega$, where $\omega$ is the contact form of $\fH$ and $\omega'$ is the contact form of $\partial\calS$.
\end{prop}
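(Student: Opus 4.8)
The plan is to verify, by direct coordinate computation, the three things packed into the statement: that $\Psi$ is a bijection of $\fH$ onto $\partial\calS$, that it intertwines the two CR structures, and that it pulls $\omega'$ back to $\omega$.

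First I would check that $\Psi$ lands in $\partial\calS$ and is a diffeomorphism onto it. Writing $\Psi(z,t)=(z_1,z_2)$ with $z_1=-|z|^2+it$ and $z_2=\sqrt2\,z$, one gets $\rho(z_1,z_2)=2\Re(z_1)+|z_2|^2=-2|z|^2+2|z|^2=0$, so $\Psi(\fH)\subseteq\partial\calS$. Conversely, if $(z_1,z_2)\in\partial\calS$ then the defining equation forces $\Re(z_1)=-|z_2|^2/2$, and one checks directly that $\Psi(z_2/\sqrt2,\ \Im z_1)=(z_1,z_2)$; hence $\Psi$ is a bijection whose inverse $(z_1,z_2)\mapsto(z_2/\sqrt2,\ \Im z_1)$ is manifestly smooth, so $\Psi$ is a diffeomorphism.

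Next I would establish the CR property. Since on $\partial\calS$ the almost complex structure on the horizontal bundle is, by construction, the ambient one of $\C^2$, i.e. multiplication by $i$ on the $(1,0)$-line $\ker(\partial\rho)=\mathrm{span}_\C\{Z'\}$, and on $\fH$ the structure $J$ of the preceding proposition is multiplication by $i$ on $\mathrm{span}_\C\{Z\}$, it is enough to show that $\Psi_*Z$ is a nonzero multiple of $Z'$ along the image. I would compute $\Psi_*Z$ by applying $Z=\partial_z+i\bar z\,\partial_t$ to the pulled-back coordinate functions $z_1\circ\Psi,\ \bar z_1\circ\Psi,\ z_2\circ\Psi,\ \bar z_2\circ\Psi$, obtaining $\Psi_*Z=-2\bar z\,\partial_{z_1}+\sqrt2\,\partial_{z_2}$, which equals $\sqrt2\,Z'$ once one substitutes $\bar z_2=\sqrt2\,\bar z$ valid on the image. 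Conjugating gives $\Psi_*\bar Z=\sqrt2\,\overline{Z'}$, so $\Psi_*$ carries $\cH(\fH)$ onto $\cH(\partial\calS)$ and $\Psi_*\circ J=J\circ\Psi_*$ there; hence $\Psi$ is a CR diffeomorphism.

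Finally, for the contact form I would pull back $\partial\rho=dz_1+\bar z_2\,dz_2$ directly: $\Psi^*(\partial\rho)=d(-|z|^2+it)+(\sqrt2\,\bar z)(\sqrt2\,dz)=\bar z\,dz-z\,d\bar z+i\,dt$. Since $\bar z\,dz-z\,d\bar z=2i\,\Im(\bar z\,dz)$ is purely imaginary, taking imaginary parts yields $\Psi^*\omega'=\Im\Psi^*(\partial\rho)=dt+2\,\Im(\bar z\,dz)=\omega$ by (\ref{eq-omega}). I do not expect any genuine obstacle — the statement is well known — so the work is just to organize these three verifications cleanly; the only things to watch are the bookkeeping of the $\sqrt2$ factors and keeping the holomorphic coordinates $(z_1,z_2)$ on $\partial\calS$ separate from the coordinates $(z,t)$ on $\fH$. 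The one mildly conceptual point is the reduction in the CR step, namely that showing $\Psi_*Z\in\ker(\partial\rho)$ already forces $J$-compatibility, which rests on characterizing each almost complex structure by its $(1,0)$-eigenbundle.
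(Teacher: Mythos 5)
Your proposal is correct and follows essentially the same route as the paper: computing $\Psi_*Z=-2\bar z\,\partial_{z_1}+\sqrt2\,\partial_{z_2}=\sqrt2\,Z'$ to get the CR property, and pulling back $\partial\rho$ to identify $\Im(\Psi^*\partial\rho)$ with $\omega$. You merely add detail the paper omits (the explicit inverse $(z_1,z_2)\mapsto(z_2/\sqrt2,\Im z_1)$ and the pullback computation the paper calls ``obvious''), which is fine.
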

\begin{proof}
We have
\begin{eqnarray*}
\Psi_*(Z)&=&Z(-|z|^2+it)\frac{\partial}{\partial z_1}+Z(-|z|^2-it)\frac{\partial}{\partial \overline{z_1}}
                                            +Z(\sqrt{2}z)\frac{\partial}{\partial z_2}
                                            +Z(\sqrt{2}\overline{z})\frac{\partial}{\partial \overline{z_2}}\\
                                            &=&\left(\frac{\partial}{\partial z}+i\overline{z}\frac{\partial}{\partial t}\right)(-|z|^2+it)\frac{\partial}{\partial z_1}+
                                            \left(\frac{\partial}{\partial z}+i\overline{z}\frac{\partial}{\partial t}\right)(\sqrt{2}z)\frac{\partial}{\partial z_2}\\
                                            &=&-\sqrt{2}\overline{z_2}\frac{\partial}{\partial z_1}+\sqrt{2}\frac{\partial}{\partial z_2}=\sqrt{2}Z',                                          
\end{eqnarray*}
which proves that $\Psi$ is CR. The last statement of the proposition is obvious.
\end{proof}

\subsection{Sub-Riemannian structure, isometries and similarities}
The sub-Riemannian product of $\fH$ is defined by the relations
\begin{equation*}\label{eq:subR}
\langle X,X\rangle=\langle Y,Y\rangle=1,\quad\langle X,Y\rangle=\langle Y,X\rangle=0.
\end{equation*}
A dual basis for $\{X,Y,T\}$ comprises of the forms
$$
\phi=dx,\quad \psi=dy,\quad \omega.
$$
The sub-Riemannian metric tensor is thus given by
$$
g_{cc}=\phi^2+\psi^2=dx^2+dy^2.
$$
This defines a K\"ahler structure on the horizontal tangent bundle $\cH$. The group ${\rm Isom}(g_{cc})$ of sub-Riemannian (CC) isometries, comprises compositions of the following transformations:
\begin{itemize}
\item Left (Heisenberg) translations: For fixed $(w,s)\in\fH$,
$$
L_{(w,s)}(z,t)=(w,s)*(z,t).
$$
\item Rotations around the vertical axis: For $\theta\in\R$,
$$
R_\theta(z,t)=(ze^{i\theta},t).
$$
\item Conjugation:
$$
j(z,t)=(\overline{z},-t).
$$
\end{itemize}
All the above isometries are orientation-preserving. The group ${\rm Sim}(g_{cc})$ of sub-Riemannian similarities is ${\rm Isom}(g_{cc})$ together with the group of
\begin{itemize}
\item Dilations: For $\delta>0$, $\delta\neq 1$,
$$
D_\delta(z,t)=(\delta z,\delta^2 t).
$$
\end{itemize}
Thus we may make the following identifications:
$$
{\rm Isom}(g_{cc})=\fH\times{\rm U}(1)\times\Z_2,\quad {\rm Sim}(g_{cc})={\rm Isom}(g_{cc})\times(\R_{>0},\cdot). 
$$
\subsection{Horizontal curves}

A smooth curve $\gamma:[a,b]\to \fH$ with 
\begin{displaymath}
\gamma(s)=(z(s),t(s))\in\mathbb{C}\times \mathbb{R},
\end{displaymath}
 is called {\it horizontal} if $\dot{\gamma}(s)\in \cH_{\gamma(s)}(\fH)$, $s\in [a,b]$; equivalently,
 \begin{displaymath}
 \dot t(s)=-2\Im\left(\overline{z(s)}\dot z(s)\right),\quad s\in [a,b].
\end{displaymath}
The horizontal length $\ell_\fH(\gamma)$ of $\gamma$ with respect to the sub-Riemannian metric is given by
\begin{displaymath}
 \ell_\fH(\gamma)=\int_a^b\sqrt{\langle\dot\gamma,X\rangle^2_{\gamma(s)}+\langle\dot
 \gamma,Y\rangle_{\gamma(s)}^2}ds=\int_a^b |\dot z (s)|\;ds.
\end{displaymath}
Therefore the horizontal length of $\gamma$ is equal to the euclidean length of $\tilde\gamma=\Pi\circ\gamma$, where $\Pi:\fH\to\C$ is the projection $(z,t)\mapsto z$. The fibres of $\Pi$ are straight lines perpendicular to the complex plane and $(\Pi;\fH,\C,\R)$  is a line bundle.

Recall the following definition, see for instance \cite{B}:
\begin{defn}
Let $\pi:M\to B$ be a submersion and $\cH$ a distribution on $M$ supplementary to $\cV=\ker(\pi_*)$. The distribution $\cH$ is {\it Ehresmann complete} if for any path $\gamma^*$ in $B$ with starting point $p^*$, and any $p\in\pi^{-1}(p^*)$, there exists a horizontal lift $\gamma$ of $\gamma^*$ in $M$ ($\pi\circ\gamma=\gamma^*$), starting from $p$.
\end{defn}
The distribution $\cH$ of $\fH$ is Ehressmann complete: Indeed,
if $\tilde\gamma(s)=z(s)$, $s\in[a,b]$, $\tilde\gamma(a)=z_0$, is a plane curve, then the fibre of $z_0$ is
$$
\Pi^{-1}(z_0)=\{(z_0,t)\;|\;t\in\R\}.
$$
Let $t_0\in\R$. We define $\gamma:[a,b]\to\fH$, $\gamma(s)=(z(s),t(s))$, where
$$
t(s)-t_0=-2\int_a^s\Im(\overline{z(u)}\dot z(u))du.
$$
Clearly $\gamma$ emanates from $(z_0,t_0)$ and it is horizontal. In the case where $\tilde\gamma$ is closed, $\tilde\gamma(a)=\tilde\gamma(b)=z_0$, we have
\begin{eqnarray*}
t(s)-t_0&=&-2\int_{\gamma}xdy-ydx\\
&=&-4\iint_{{\rm int}(\gamma)}dx\wedge dy=-4{\rm Area}({\rm int}(\gamma)),
\end{eqnarray*}
were we have applied Green's theorem and have assumed that all orientations are positive. Also here, ${\rm Area}({\rm int}(\gamma))$ is the Euclidean area of ${\rm int}(\gamma)$.

\subsection{CC and Kor\'anyi-Cygan distances}

Since $\fH$ is contact, given any two arbitrary but distinct points $p=(z,t)$ and $q=(z',t')$ in $\fH$, there exists a horizontal curve joining $p$ and $q$. We define the {\it Carnot-Carath\'eodory distance} of two arbitrary points $p,q\in\fH$ to be
$$
d_{cc}(p,q)=\inf_\gamma\ell_\fH(\gamma),
$$ 
where $\gamma$ is horizontal and joins $p$ and $q$. Clearly,
$$
{\rm Isom}(d_{cc})=\fH\times{\rm U}(1)\times\Z_2,\quad {\rm Sim}(d_{cc})={\rm Isom}(g_{cc})\times(\R_{>0},\cdot). 
$$
Besides the $d_{cc}$ there is another distance function which is naturally defined in $\fH$. This is the  {\it Kor\'anyi--Cygan}  (or {\it Heisenberg}) distance $d_\fH$ which is given by the relation
$$
d_\fH\left((z_1,t_1),\,(z_2,t_2)\right)
=\left|(z_1,t_1)^{-1}*(z_2,t_2)\right|,
$$
where $|(z,t)|=|-|z|^2+it|^{1/2}$. This is not a path distance and does not come from any Riemannian metric defined in $\fH$. However, $d_\fH$ is invariant under left translations, rotations around the vertical axis and conjugation, so that
$$
{\rm Isom}(d_\fH)={\rm Isom}(d_{cc}).
$$ 
As for the similarity group ${\rm Sim}(d_\fH)$, besides dilations it also comprises
{\rm inversion} $I$. This is defined in $\fH\setminus\{(0,0)\}$ by 
$$
I(z,t)=\left(\frac{z}{-|z|^2+it},\;
-\frac{t}{|-|z|^2+it|^2}\right).
$$
In comparison, the Kor\'anyi-Cygan metric and the Carnot-Carath\'eodory metric are equivalent metrics, they generate the same infinitesimal structure in $\fH$, their isometry groups are the same, but not their similarity groups: inversion is not a similarity of $d_{cc}$.
\subsection{Smooth contact diffeomorphisms of $\fH$}
A {\it contact transformation} $F:\fH\to\fH$ is a smooth (at least of class $\cC^2$) diffeomorphism which preserves the contact structure, i.e.
\begin{equation}\label{eq:contact_form}
 F^*\omega = \lambda \omega,
\end{equation}
for some non-vanishing real valued function $\lambda$. We  write $F=(f_I,f_3)$, $f_I=f_1+\mathrm{i}f_2$. The matrix of its differential $F_*$ is
$$
DF=\left[\begin{matrix}
Zf_I&\overline{Z}f_I&Tf_I\\
\\
Z\overline{f_I}&\overline{Zf_I}&T\overline{f_I}\\
\\
0&0&\lambda
\end{matrix}\right].
$$
A contact mapping  $F$ is completely determined by $f_I$ in the sense that the contact condition  (\ref{eq:contact_form}) is equivalent to the following system of partial differential equations:
\begin{eqnarray}
\notag\label{eq:C1} 
&&\overline{f}_IZ f_I - f_I Z \overline{f}_I +iZf_3=0,\\
&&
\label{eq:C2}
f_I \overline{Z}\overline{f}_I-\overline{f}_I \overline{Z}f_I -
i\overline{Z}f_3=0,\\
&&
\notag\label{eq:C3}
-i(\overline{f}_I T f_I - f_I T \overline{f}_I +
iTf_3)=\lambda.
\end{eqnarray}
If $F$ is $\cC^2$ it is  proved that the Jacobian determinant $J_F$ satisfies
$J_F=\lambda^2.
$
 Moreover, for orientation-preserving contact transformations we have
\begin{equation}\label{eq:D0}
J_0=\det(D_0)=\det\left[\begin{matrix}
Zf_I&\overline{Z}f_I\\
Z\overline{f_I}&\overline{Zf_I}
\end{matrix}\right]=\lambda.
\end{equation}

Let $|\cdot|$ be the sub-Riemannian norm on $\fH$ and let $F=(f_I,f_3)$ be a smooth, orientation-preserving contact transformation, $F^*\omega=\lambda\omega$. Let also $p\in\fH^\star$. A horizontal vector $V_p\in\cH_p$ is of the form $a\bX_p+b\bY_p$ and $|V_p|=\sqrt{a^2+b^2}$. Since $F$ is contact, for every such $V_p$ we always have $F_{*,p}V_p\in\cH_{F(p)}$. We consider
\begin{equation*}\label{eq:lambda12}
\lambda_1(p)=\sup_{|V_p|=1}|F_{*,p}V_p|,\quad \lambda_2(p)=\inf_{|V_p|=1}|F_{*,p}V_p|.
\end{equation*}
We have
$$
\lambda_1(p)=|Zf_I(p)|+|\overline{Z}f_I(p)|
$$
and
$$
\lambda_2(p)=|Zf_I(p)|-|\overline{bZ}f_I(p)|.
$$
Let $D_0(p)$ as in (\ref{eq:D0})
and consider the matrix $D_0(p)^*D_0(p)$, where $D_0(p)^*$ is the complex transpose of $D_0(p)$. Note that $\lambda_1^2(p),\lambda_2^2(p)$ are the eigenvalues of $D_0(p)^*D_0(p)$ and we have also proved that $J_0(p)=\det(J_0(p))=\lambda(p)$. We thus have:

\begin{prop}
If $F$ is an orientation-preserving, smooth contact transformation of $\fH$, then we have
$$
\lambda_1\lambda_2=\lambda.
$$
\end{prop}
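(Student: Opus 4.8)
The plan is to read the result off the data already assembled above: the explicit formulas
\[
\lambda_1(p)=|Zf_I(p)|+|\overline{Z}f_I(p)|,\qquad \lambda_2(p)=|Zf_I(p)|-|\overline{Z}f_I(p)|,
\]
together with the Jacobian identity $(\ref{eq:D0})$, which in the abbreviation $a=Zf_I(p)$, $b=\overline{Z}f_I(p)$ states that $\det D_0(p)=|a|^2-|b|^2=\lambda(p)$.

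First I would use orientation-preservation to note that $\lambda(p)>0$ for every $p$; since $\lambda(p)=|a|^2-|b|^2$, this forces $|a|>|b|$, so that $\lambda_2(p)=|a|-|b|$ is genuinely a nonnegative number (as it must be, being an infimum of norms) rather than merely a signed expression. With that in hand, the claim is the one-line factorization
\[
\lambda_1(p)\,\lambda_2(p)=\bigl(|a|+|b|\bigr)\bigl(|a|-|b|\bigr)=|a|^2-|b|^2=\det D_0(p)=\lambda(p).
\]

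An alternative route, which also re-checks the assertion that $\lambda_1^2(p)$ and $\lambda_2^2(p)$ are the eigenvalues of $D_0(p)^*D_0(p)$, is to multiply those eigenvalues: their product is $\det\bigl(D_0(p)^*D_0(p)\bigr)=|\det D_0(p)|^2=\lambda(p)^2$, and taking positive square roots (using $\lambda(p)>0$ and $\lambda_1(p),\lambda_2(p)\ge 0$) again yields $\lambda_1(p)\lambda_2(p)=\lambda(p)$.

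There is no real obstacle here, since everything needed has been set up in the paragraph preceding the statement. The only point deserving a line of care is the positivity of $\lambda(p)$, and hence of $\lambda_2(p)$: without it the factorization would only give $\lambda_1\lambda_2=|\lambda|$. This is precisely where the orientation-preserving hypothesis is used.
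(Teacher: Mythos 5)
Your proposal is correct and follows essentially the same route as the paper, which presents the identity as an immediate consequence of the explicit formulas $\lambda_{1,2}=|Zf_I|\pm|\overline{Z}f_I|$ and the fact that $\det D_0=|Zf_I|^2-|\overline{Z}f_I|^2=\lambda$ (equivalently, that $\lambda_1^2,\lambda_2^2$ are the eigenvalues of $D_0^*D_0$). Your added remark that orientation-preservation gives $\lambda>0$ and hence $\lambda_2\ge 0$ is a worthwhile clarification the paper leaves implicit.
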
 

\begin{defn}
Let $F=(f_I,f_3)$ a smooth, orientation-preserving contact transformation of $\fH$. The {\it maximal distortion function} $K_F$ is defined for each $p\in\fH$ by
$$
K_F(p)=\frac{\lambda_1(p)}{\lambda_2(p)}=\frac{|Zf_I)(p)|+|\overline{Z}f_I(p)|}{|Zf_I(p)|-|\overline{Z}f_I(p)|}.
$$
\end{defn}

\subsection{Quasiconformal contact diffeomorphisms of $\fH$}
\begin{defn}
Let $F=(f_I,f_3)$ be a smooth, orientation-preserving contact transformation of $\fH$. If its maximal distortion function satisfies
$$
K_F(p)\le K, \quad p\in\fH,
$$
for some $K$, then $F$ shall be called $K$-quasiconformal.
\end{defn}
The {Beltrami coefficient} $\mu_F:\fH\to\C$ of the $K$-quasiconformal mapping $F$ is given by
$$
\mu_F(p)=\frac{\overline{Z}f_I}{Z f_I}(p),\quad p\in\fH.
$$
If $F$ is $K$-quasiconformal then $F$ equivalently satisfies
$$
\|\mu_F\|={\rm esssup}_{p\in\fH}\{|\mu_F(p)|\}\le k=\frac{K-1}{K+1}\in[0,1).
$$
The contact conditions also imply
\begin{eqnarray*}
&&
\frac{\overline{Z}f_I}{Z f_I}=\frac{\overline{Z}f_{II}}{Z f_{II}}=\mu_F,
\end{eqnarray*}
where $f_{II}=-|f_I|^2+if_3$.
\subsection{Lifting Theorem}\label{sec:liftC}

The following theorem is a version of Theorem 5.3 of \cite{CT}, see also Theorem 1.6 of \cite{BHT}, adapted to our setting. For completeness, we give a proof.

\begin{thm}\label{thm:liftC}
Suppose $f:\C\to\C$ is symplectic with respect to the Euclidean K\"ahler form of $\C$. Then there exists a function $\phi:\C\to\R$ such that the mapping $F:\fH\to\fH$ given for each $(z,t)\in\fH$ by
$$
F(z,t)=(f(z),\;t+\phi(z))
$$
is a contact transformation of $\fH$ with $\lambda=1$. If moreover $f$ is quasiconformal with Beltrami coefficient $\mu_f$, then $F$ is also quasiconformal with
$$
\mu_F(z,t)=\mu_f(z).
$$
\end{thm}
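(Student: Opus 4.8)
The plan is to determine $\phi$ from the single contact equation $F^*\omega=\omega$ and then to read off the Beltrami coefficient directly from its definition. First I would write $\omega=dt+\beta$ with $\beta=2\Im(\overline z\,dz)=2x\,dy-2y\,dx$, so that $d\beta=4\,dx\wedge dy$ is four times the Euclidean K\"ahler form of $\C$. For $F(z,t)=(f(z),\,t+\phi(z))$, with all forms on $\C$ pulled back through the projection $(z,t)\mapsto z$, one has $F^*(dt)=dt+d\phi$ and $F^*\beta=f^*\beta$, hence
$$
F^*\omega=dt+d\phi+f^*\beta .
$$
Thus $F^*\omega=\omega$, i.e.\ $F$ contact with $\lambda\equiv1$, is equivalent to the single scalar equation $d\phi=\beta-f^*\beta$.

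Next I would produce $\phi$. The $1$-form $\beta-f^*\beta$ on $\C$ is closed if and only if $d\beta=f^*(d\beta)$, that is $4\,dx\wedge dy=4\,f^*(dx\wedge dy)$, i.e.\ the Jacobian $J_f\equiv1$ --- which is exactly the hypothesis that $f$ is symplectic for the Euclidean K\"ahler form. Since $\C$ is simply connected, Poincar\'e's lemma then yields a function $\phi$ of class $\cC^2$ (for instance $\phi(z)=\int_{z_0}^{z}(\beta-f^*\beta)$ along any path from a fixed basepoint $z_0$), and $F^*\omega=\omega$. I would also note that $F$ is a $\cC^2$ diffeomorphism of $\fH$ whose Jacobian matrix is block-triangular with lower-right entry $1$ and upper-left block $Df$, so $J_F=J_f=1>0$; hence $F$ is orientation-preserving, and therefore a smooth contact transformation with $\lambda\equiv1$.

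For the quasiconformal conclusion I would use that, in holomorphic coordinates, the first component of $F$ is $f_I(z,t)=f(z)$, which does not involve $t$. Since $Z=\partial_z+i\overline z\,\partial_t$ and $\overline Z=\partial_{\overline z}-iz\,\partial_t$, the vertical parts annihilate $f_I$, so $Zf_I=f_z$ and $\overline Z f_I=f_{\overline z}$. Hence
$$
\mu_F(z,t)=\frac{\overline Z f_I}{Z f_I}=\frac{f_{\overline z}}{f_z}=\mu_f(z),\qquad K_F(z,t)=\frac{|f_z|+|f_{\overline z}|}{|f_z|-|f_{\overline z}|}=K_f(z)\le K,
$$
so $F$ is $K$-quasiconformal with $\mu_F(z,t)=\mu_f(z)$ (a smooth quasiconformal $f$ being a $\cC^2$ diffeomorphism, $f_z$ is nowhere zero, so the quotients make sense).

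I expect no serious obstacle: the only step with any content is recognizing that the obstruction to solving $d\phi=\beta-f^*\beta$, namely closedness of $\beta-f^*\beta$, is precisely the condition $J_f\equiv1$, after which the existence of $\phi$ is just Poincar\'e's lemma. The distortion identities are then immediate because $f_I$ is independent of the vertical variable $t$, which is the characteristic feature of this lift; the remaining points (smoothness of $\phi$, that $F$ is an orientation-preserving diffeomorphism) are routine bookkeeping.
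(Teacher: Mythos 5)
Your proposal is correct and follows essentially the same route as the paper: your form $\beta-f^*\beta=2\Im(\overline z\,dz-\overline f\,df)$ is exactly the paper's $\eta$, closedness is in both cases equivalent to $J_f\equiv 1$ (the symplectic hypothesis), Poincar\'e's lemma produces $\phi$, and the Beltrami identity follows because $f_I=f(z)$ is independent of $t$. Your write-up merely makes a couple of routine points (orientation, smoothness of $\phi$) more explicit than the paper does.
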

\begin{proof}
We define the form $\eta$ in $\C$ by
$$
\eta=2\Im(\overline{z}dz-\overline{f}df).
$$
Since $f$ is symplectic, $J_f=|f_z|^2-|f_{\overline{z}}|^2=1$ and hence $\eta$ is closed, $d\eta=0$.

By Poincar\'e's Lemma, there exists  a function $\phi:\C\to\R$ such that $\eta=d\phi$. Therefore, 
$$
F^*\omega=dt+d\eta+2\Im(\overline{f}df)=\omega
$$
and $F$ is contact. On the other hand,
$$
\mu_F(z,t)=\frac{\overline{Z}f}{Z f}=\frac{f_{\overline z}}{f_z}=\mu_f (z).
$$
\end{proof}
Note that the map $F$ given in Theorem \ref{thm:liftC} is a {\it vertical lines-preserving} map. That is, it preserves the fibres of the projection $\Pi:\fH\to\C$. It can be shown that vertical lines-preserving maps are the only contact transformations of $\fH$ with constant $\lambda$. For more details, see \cite{CT}.

\section{Hyperbolic Heisenberg group}\label{sec:hypheis}
Hyperbolic Heisenberg group $\fH^\star$ has been introduced in \cite{PS}. In Section \ref{sec:dlg} we give the definition of $\fH^\star$ and describe its Lie group structure. In Section \ref{sec:mm} we describe $\fH^\star$ as a subgroup of ${\rm SU}(1,1)\times{\rm U}(1)$, as the latter sits in the set ${\rm PU}(2,1)$ of holomorphic isometries of complex hyperbolic plane. Section \ref{sec:SPCR} is devoted to the strictly pseudoconvex structure of $\fH^\star$ which is actually the one coming from the set $\bH^\star=\partial\bH^2_\C\setminus\partial\bH^1_\C$. Finally, in Sections \ref{sec:sr} and \ref{sec:hc} we study the sub-Riemannian metric and the horizontal curves in $\fH^\star$, respectively. 

\subsection{Definition and Lie group structure}\label{sec:dlg}

\begin{defn}
The hyperbolic Heisenberg group $\fH^\star$ is $\C_*\times\R$ with multiplication rule
$$
(z,t)\star(w,s)=(zw,t+s|z|^2).
$$
\end{defn}

\begin{prop}
The hyperbolic Heisenberg group $\fH^\star$:
\begin{itemize}
\item is a non-Abelian group. Its unit element of $\fH^\star$ is (1,0) and the inverse of an arbitrary $(z,t)\in\fH^\star$ is $(1/z,-t/|z|^2)$;
\item is a Lie group with underlying manifold $\C_*\times\R$. The map
\begin{eqnarray*}
&&
\fH^\star\times\fH^\star\ni\left((z,t),(w,s)\right)\mapsto (z,t)^{-1}\star (w,s)=\left(\frac{w}{z},\frac{-t+s}{|z|^2}\right)\in\fH^\star,
\end{eqnarray*}
is smooth. 
\end{itemize}
\end{prop}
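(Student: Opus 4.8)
The plan is to verify each bullet point of the proposition by direct computation, treating $\fH^\star = \C_* \times \R$ as a manifold with the explicit multiplication $(z,t)\star(w,s) = (zw, t+s|z|^2)$.

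First I would establish the group axioms. The identity $(1,0)$ is checked by $(z,t)\star(1,0) = (z\cdot 1, t + 0\cdot|z|^2) = (z,t)$ and $(1,0)\star(z,t) = (1\cdot z, 0 + t\cdot|1|^2) = (z,t)$. For the inverse, I would compute $(z,t)\star(1/z, -t/|z|^2) = \bigl(z\cdot\tfrac{1}{z},\; t + (-t/|z|^2)|z|^2\bigr) = (1,0)$, and similarly on the other side $(1/z,-t/|z|^2)\star(z,t) = \bigl(\tfrac{1}{z}\cdot z,\; -t/|z|^2 + t\,|1/z|^2\bigr) = (1,0)$, using $|1/z|^2 = 1/|z|^2$. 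Associativity requires expanding $\bigl((z,t)\star(w,s)\bigr)\star(u,r)$ and $(z,t)\star\bigl((w,s)\star(u,r)\bigr)$: the first component is $zwu$ both ways, and the second component works out to $t + s|z|^2 + r|zw|^2$ in both orderings, since $|zw|^2 = |z|^2|w|^2$. Non-abelianness follows from a single counterexample, e.g. comparing $(z,t)\star(w,s)$ and $(w,s)\star(z,t)$ with $|z|\neq|w|$, say $(2,0)\star(1,1) = (2,2)$ versus $(1,1)\star(2,0) = (2,1)$.

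For the Lie group structure, the underlying manifold $\C_* \times \R$ is an open subset of $\C \times \R \cong \R^3$, hence a smooth manifold. It remains to check that the map $\left((z,t),(w,s)\right)\mapsto (z,t)^{-1}\star(w,s)$ is smooth, which by the standard criterion suffices for $\fH^\star$ to be a Lie group. I would simply substitute the formula for the inverse: $(z,t)^{-1}\star(w,s) = (1/z, -t/|z|^2)\star(w,s) = \bigl(\tfrac{1}{z}\cdot w,\; -t/|z|^2 + s\,|1/z|^2\bigr) = \bigl(\tfrac{w}{z}, \tfrac{-t+s}{|z|^2}\bigr)$, recovering the stated expression. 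Smoothness of this map on $\fH^\star \times \fH^\star$ is then immediate: both components are rational functions of $z, \bar z, w, \bar s, s$ (equivalently of the real coordinates) whose denominators involve only $z$ and $|z|^2 = z\bar z$, which never vanish on $\C_*$; division by a nowhere-zero smooth function and multiplication are smooth operations.

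I do not anticipate a genuine obstacle here: the entire proposition is a routine verification, and the only mild care needed is in the bookkeeping of the associativity computation (keeping track of which modulus-squared factor multiplies which $\R$-coordinate) and in noting that the combined smoothness criterion — smoothness of $(g,h)\mapsto g^{-1}h$ — is exactly what is needed, rather than checking multiplication and inversion separately. I would present the associativity check as a short displayed computation and leave the remaining identities to a sentence each.
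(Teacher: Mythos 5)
Your verification is correct and complete; the paper itself states this proposition without proof, treating it as the routine computation you carry out (identity, two-sided inverse, associativity via $|zw|^2=|z|^2|w|^2$, a non-commuting pair, and smoothness of $(g,h)\mapsto g^{-1}\star h$ on the open set $\C_*\times\R$). One trivial slip: $(2,0)\star(1,1)=(2,0+1\cdot|2|^2)=(2,4)$, not $(2,2)$, but the counterexample still shows non-commutativity since $(1,1)\star(2,0)=(2,1)$.
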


In order to detect a basis for the left invariant vector fields of $\fH^\star$, 
we fix a left translation
$$
F(z,t)=L_{(w,s)}(z,t)=(wz,s+t|w|^2).
$$
The complex matrix $DF$ of the differential $F_{*}$ is:
$$
DF=\left[\begin{matrix}
w&0&0\\
0&\overline{w}&0\\
0&0&|w|^2
\end{matrix}\right].
$$
Hence the vector fields
\begin{equation}\label{eq:basisc}
Z^{*}=z\frac{\partial}{\partial z},\quad \overline{Z}^{*}=\overline{z}\frac{\partial}{\partial \overline{z}},\quad T^{*}=|z|^2\frac{\partial}{\partial t},
\end{equation}
form a left-invariant basis for the Lie algebra of $\fH^{\star}$ and the corresponding real basis is
\begin{equation}\label{eq:basis}
X^{*}=x\frac{\partial}{\partial x}+y\frac{\partial}{\partial y},\quad Y^{*}=x\frac{\partial}{\partial y}-y\frac{\partial}{\partial x},\quad T^{*}=(x^2+y^2)\frac{\partial}{\partial t},
\end{equation}
so that
$$
Z^{*}=\frac{1}{2}(X^{*}-iY^{*}),\quad \overline{Z}^{*}=\frac{1}{2}(X^{*}+iY^{*}).
$$
We have the bracket relations
\begin{equation}\label{eq:nontrivb}
[X^{*},T^{*}]=2T^{*},\quad [X^*,Y^*]=[Y^*,T^*]=0.
\end{equation}
Also, since $\det(DF)=|w|^4$ we have that the Haar measure of $\fH^{\star}$ is given by
$$
dm=\frac{dx\wedge dy\wedge dt}{(x^2+y^2)^2}=\frac{i}{2}\cdot\frac{dz\wedge d\overline{z}\wedge dt}{|z|^2}.
$$
\subsection{Matrix model, isomorphisms and actions}\label{sec:mm}
We next show that there exists an identification of $\fH^\star$ with a $3\times 3$ matrix group which is a subgroup of ${\rm PU}(2,1)$, that is, the set of holomorphic isometries of complex hyperbolic plane. 
Let $M$ be the map from $(\C\times\R)\setminus\V$ to ${\rm GL}(3,\C)$ given for each $p=(z,t)\in(\C\times\R)\setminus\V$ by
 \begin{equation}\label{eq:T}
 M(z,t)=\left[\begin{matrix}
 |z|&0& i\frac{t}{|z|}\\
 \\
 0&e^{i\arg z}&0\\
 \\
 0&0&\frac{1}{|z|}\end{matrix}\right].
 \end{equation}
The matrix $M(z,t)$ is an element of ${\rm PU}(2,1)$: If
$$
\left[\begin{matrix}
 0&0&1\\
 \\
 0&1&0\\
 \\
 1&0&0\end{matrix}\right],
$$
then $M(z,t)J(M(z,t))^*=J$.

The proof of the following proposition is straightforward; it reveals the multiplication rule in the group $\fH^\star$.
\begin{prop}
The map $M$ as in (\ref{eq:T}) is a group homomorphism from $\fH^\star=(\C_*\times\R,\star)$ to $({\rm PU}(2,1),\cdot)$ where $\cdot$ is the usual matrix multiplication.
\end{prop}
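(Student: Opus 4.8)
The proof will be a direct computation, so I organise it in two short steps. \emph{Step 1: each $M(z,t)$ lies in ${\rm PU}(2,1)$.} This is essentially the remark preceding the statement: with $J$ the anti-diagonal Hermitian form of signature $(2,1)$ one checks by a $3\times 3$ multiplication that $M(z,t)\,J\,M(z,t)^{*}=J$, so $M(z,t)\in{\rm U}(2,1)$; since $\det M(z,t)=|z|\cdot e^{i\arg z}\cdot |z|^{-1}=e^{i\arg z}$ has unit modulus, $M(z,t)$ determines a well-defined element of ${\rm PU}(2,1)$ (if one prefers a representative in ${\rm SU}(2,1)$, rescale by a cube root of $e^{-i\arg z}$; the projective class is unchanged). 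Thus $M$ is at least a well-defined map $\fH^\star\to{\rm PU}(2,1)$.

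\emph{Step 2: the homomorphism identity.} I would compute the product $M(z,t)\,M(w,s)$ of the two upper-triangular matrices directly. Both factors have the same sparsity pattern, so the product is again of that shape, and only the three nonzero ``diagonal-type'' positions plus the $(1,3)$-entry need recording. The $(1,1)$-entry is $|z|\,|w|=|zw|$, the $(2,2)$-entry is $e^{i\arg z}e^{i\arg w}=e^{i\arg(zw)}$ (well defined modulo $2\pi$, and this identity is exactly what makes $z\mapsto e^{i\arg z}$ a homomorphism $\C_*\to{\rm U}(1)$), and the $(3,3)$-entry is $|z|^{-1}|w|^{-1}=|zw|^{-1}$. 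The one computation with any content is the $(1,3)$-entry:
$$
|z|\cdot\frac{is}{|w|}+\frac{it}{|z|}\cdot\frac{1}{|w|}=\frac{i(t+s|z|^{2})}{|z|\,|w|}=\frac{i(t+s|z|^{2})}{|zw|}.
$$
Comparing with the definition (\ref{eq:T}), these four entries are precisely those of $M\bigl(zw,\ t+s|z|^{2}\bigr)$, and since $(z,t)\star(w,s)=(zw,\ t+s|z|^{2})$ this says $M(z,t)\,M(w,s)=M\bigl((z,t)\star(w,s)\bigr)$. Hence $M$ is a group homomorphism, as claimed.

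There is no serious obstacle: the entire content of the proposition is the bookkeeping in Step 2, and the only point to check with care is that in the $(1,3)$-slot the two contributions assemble into the \emph{asymmetric} combination $t+s|z|^{2}$ demanded by the twisted product of $\fH^\star$ (and not a symmetrised version), while the moduli $|z|,|w|$ combine correctly as $|zw|$ thanks to the $|z|$- and $|z|^{-1}$-weights sitting in the $(1,1)$- and $(3,3)$-corners. If one wishes, one can close the discussion by noting that $M$ is injective --- the first row recovers $|z|$ and $t/|z|$, hence $t$, and the second row recovers $e^{i\arg z}$ --- so that $\fH^\star$ is isomorphic onto its image $M(\fH^\star)\subset{\rm PU}(2,1)$, consistent with the identification $\fH^\star\cong AN\times{\rm U}(1)$ announced in the introduction.
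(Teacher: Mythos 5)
Your proof is correct and follows essentially the same route as the paper: a direct entry-by-entry computation of the matrix product $M(z,t)\,M(w,s)$, with the $(1,3)$-entry assembling into $i(t+s|z|^2)/|zw|$ exactly as required by the $\star$-product. Your Step 1 (membership in ${\rm PU}(2,1)$ via $MJM^{*}=J$) and the closing injectivity remark are consistent with what the paper records just before the proposition, so there is nothing to correct.
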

\begin{proof}
For $(z,t)$ and $(z',t')$ in $\C_*\times\R$ we have
\begin{eqnarray*}
M(z,t)\cdot M(z',t')&=&\left[\begin{matrix}
 |z|&0& i\frac{t}{|z|}\\
 \\
 0&e^{i\arg z}&0\\
 \\
 0&0&\frac{1}{|z|}\end{matrix}\right]\cdot\left[\begin{matrix}
 |z'|&0& i\frac{t'}{|z'|}\\
 \\
 0&e^{i\arg z'}&0\\
 \\
 0&0&\frac{1}{|z'|}\end{matrix}\right]\\
 &=&\left[\begin{matrix}
 |zz'|&0& i\frac{t+t'|z|^2}{|zz'|}\\
 \\
 0&e^{i\arg zz'}&0\\
 \\
 0&0&\frac{1}{|zz'|}\end{matrix}\right]=M((z,t)\star(z',t')).
\end{eqnarray*}
\end{proof}
The group $\fH^\star$ is therefore identified to the subgroup of ${\rm PU}(2,1)$ comprising elements of the form
$$
M(r,\theta,s)=\left[\begin{matrix}
 r&0& i\frac{s}{r}\\
 \\
 0&e^{i\theta}&0\\
 \\
 0&0&\frac{1}{r}\end{matrix}\right],\quad r>0,\;\theta\in\R,\;s\in\R.
$$
Let
$$
{\rm SU}(1,1)=\left\{\left[\begin{matrix}
a&ib\\
ic&d
\end{matrix}\right],\;a,b,c,d\in\R,\;ad+bc=1\right\},
$$
be the double cover of the set of holomorphic isometries of the complex hyperbolic line $\bH^1_\C$ which here is represented by the left hyperbolic plane
$$
\calL=\{\zeta=\xi+i\eta\in\C\:|\;xi<0\}.
$$
We consider the following embedding of ${\rm SU}(1,1)\times {\rm U}(1)$ into ${\rm PU}(2,1)$:
\begin{equation}\label{eq:embedding}
\left(\left[\begin{matrix}
a&ib\\
ic&d
\end{matrix}\right],\;e^{i\theta}\right)\mapsto\left[\begin{matrix}
a&0&ib\\
\\
0&e^{i\theta}&0\\
\\
ic&0&d
\end{matrix}\right].
\end{equation}
\begin{cor}
The image of the map $M$ as in (\ref{eq:T}) is in ${\rm SU}(1,1)\times {\rm U}(1)$. 
\end{cor}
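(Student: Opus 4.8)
The plan is a short direct verification: exhibit an explicit preimage of each matrix in the image of $M$ under the embedding (\ref{eq:embedding}). Fix $(z,t)\in\C_*\times\R$ and write $r=|z|>0$, $\theta=\arg z$, $s=t$, so that by (\ref{eq:T}) we have $M(z,t)=M(r,\theta,s)$. Reading off from the shape of $M(r,\theta,s)$ and comparing entrywise with the right-hand side of (\ref{eq:embedding}) forces the choices $a=r$, $b=s/r$, $c=0$, $d=1/r$ for the ${\rm SU}(1,1)$-block, together with the second coordinate $e^{i\theta}$.

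Next I would check these choices are admissible. All of $a,b,c,d$ are real since $r>0$ and $s\in\R$, and the defining relation of ${\rm SU}(1,1)$ holds, because $ad+bc=r\cdot\tfrac1r+\tfrac sr\cdot 0=1$; hence
$$
\left(\left[\begin{matrix} r&is/r\\ 0&1/r\end{matrix}\right],\ e^{i\theta}\right)\in{\rm SU}(1,1)\times{\rm U}(1).
$$
Applying the embedding (\ref{eq:embedding}) to this pair gives
$$
\left[\begin{matrix} r&0&is/r\\ 0&e^{i\theta}&0\\ 0&0&1/r\end{matrix}\right]=M(r,\theta,s)=M(z,t),
$$
so $M(z,t)$ lies in the image of (\ref{eq:embedding}), i.e.\ in the copy of ${\rm SU}(1,1)\times{\rm U}(1)$ sitting inside ${\rm PU}(2,1)$.

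There is essentially no obstacle here; the content is just that each $M(r,\theta,s)$ is upper triangular with positive $(1,1)$-entry in its ${\rm SU}(1,1)$-block and vanishing $(3,1)$-entry, so it in fact lands in the $AN\times{\rm U}(1)$ part of ${\rm SU}(1,1)\times{\rm U}(1)$, where $AN$ is the solvable factor of the Iwasawa decomposition of ${\rm SU}(1,1)$. This is precisely the computation behind the isomorphism $\fH^\star\cong AN\times{\rm U}(1)$ announced in the introduction, and one could record that refinement in a remark following the corollary.
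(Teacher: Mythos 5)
Your verification is correct and is exactly the argument the paper intends: the corollary is stated without proof because it follows by reading off $a=r$, $b=s/r$, $c=0$, $d=1/r$ (with $ad+bc=1$) and matching $M(r,\theta,s)$ against the embedding (\ref{eq:embedding}). Your closing remark that the image actually lies in $AN\times{\rm U}(1)$ agrees with the Iwasawa discussion the paper gives immediately afterwards.
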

Recall the Iwasawa $KAN$ decomposition of ${\rm SU}(1,1)$: 
$$
K={\rm SO}(2)\simeq {\rm U}(1),\quad A\simeq(\R_+,\cdot),\quad N=(\R,+).
$$
If
$$
A=\left(\begin{matrix}
a&ib\\
ic&d\end{matrix}\right)\in {\rm SU}(1,1,)
$$
there exist numbers $r>0$, $\theta\in\R$ and $t\in\R$ such that
\begin{equation*}
A=\left[\begin{matrix}
\cos\theta&i\sin\theta\\
i\sin\theta&\cos\theta
\end{matrix}\right]\cdot
\left[\begin{matrix}
r&0\\
0&1/r
\end{matrix}\right]\cdot
\left[\begin{matrix}
1&t\\
0&1
\end{matrix}\right].
\end{equation*}
From this and the above discussion we conclude that $\fH^\star$ is isomorphic to $AN\times{\rm U}(1)$. But $AN$ is just the group ${\mathcal Aff}$ which realizes the hyperbolic plane $\bH^1_\C$. We therefore have:
\begin{prop}
The hyperbolic Heisenberg group $\fH^\star$ is in bijection with $\calL\times S^1$.
\end{prop}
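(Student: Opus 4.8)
The plan is to read the statement off the matrix model of Section~\ref{sec:mm}. First I would note that the homomorphism $M$ of the previous proposition is injective: from the matrix $M(z,t)$ of (\ref{eq:T}) one recovers $|z|$ from the $(1,1)$-entry, $\arg z$ from the $(2,2)$-entry, and $t$ from the $(1,3)$-entry. Hence $M$ is a bijection of $\fH^\star$ onto the subgroup $\{M(r,\theta,s):r>0,\ \theta\in\R,\ s\in\R\}$ of ${\rm SU}(1,1)\times{\rm U}(1)$, which, via the embedding (\ref{eq:embedding}) and the Iwasawa factorization displayed just before the statement, is exactly $AN\times{\rm U}(1)$ (the $AN$-factor carrying the parameters $r$ and $s$, the ${\rm U}(1)$-factor carrying $\theta$). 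Thus $\fH^\star$ is in bijection with $AN\times{\rm U}(1)$, and it remains only to put $AN$ in bijection with $\calL$ and ${\rm U}(1)$ with $S^1$.

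The second part is a bijection $AN\to\calL$. Here $\bH^1_\C$ is realized as $\calL$, on which ${\rm SU}(1,1)$ acts by M\"obius transformations, and $AN$ is the ``$\cAff$'' factor of this action: by the Iwasawa decomposition ${\rm SU}(1,1)=KAN$, with $K$ the stabilizer of a basepoint $\zeta_0\in\calL$, the $AN$-action on $\calL$ is simply transitive, so the orbit map $g\mapsto g\cdot\zeta_0$ restricts to a bijection $AN\to\calL$. (Concretely one may take $\zeta_0=-1$; the element with parameters $(r,s)$ then carries $\zeta_0$ to an explicit point $\zeta(r,s)\in\calL$, and $(r,s)\mapsto\zeta(r,s)$ is a diffeomorphism of $\R_+\times\R$ onto the half-plane $\calL$ — this is precisely the assertion that $AN=\cAff$ realizes $\bH^1_\C$.) Since moreover ${\rm U}(1)$ is literally $S^1$, composing with the bijection of the first part yields a bijection $\fH^\star\to\calL\times S^1$, as claimed.

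The argument has no genuinely difficult step; it is bookkeeping on top of the matrix model. The one ingredient that should be cited rather than taken for granted is the simple transitivity of the $AN$-action on $\bH^1_\C$, i.e.\ that $K\cap AN=\{e\}$ and $KAN$ exhausts ${\rm SU}(1,1)$ — but this is exactly the Iwasawa decomposition recalled above. I would therefore keep the write-up to a few lines: injectivity of $M$, identification of its image with $AN\times{\rm U}(1)$, and simple transitivity of $AN$ on $\calL$.
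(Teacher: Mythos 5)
Your proposal is correct and follows essentially the same route as the paper: the paper also deduces the statement from the matrix model, the embedding of ${\rm SU}(1,1)\times{\rm U}(1)$, and the Iwasawa decomposition, identifying $\fH^\star$ with $AN\times{\rm U}(1)$ and invoking the fact that $AN=\cAff$ realizes $\bH^1_\C$ simply transitively. You merely make explicit the injectivity of $M$ and the orbit-map bijection $AN\to\calL$, which the paper leaves implicit.
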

\begin{cor}
$$
\fH^\star=({\rm SU}(1,1)\times{\rm U}(1))/{\rm SO}(2).
$$
\end{cor}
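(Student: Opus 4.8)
The plan is to assemble the statement from the matrix model established above together with the standard Iwasawa picture of ${\rm SU}(1,1)$, so that essentially no new computation is needed. The one genuinely new ingredient is the classical fact that the hyperbolic plane $\bH^1_\C$ is realized simultaneously as the symmetric space ${\rm SU}(1,1)/{\rm SO}(2)$ and as the solvable group $AN={\mathcal Aff}$; everything else is bookkeeping.

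First I would make the identification ${\rm SU}(1,1)/{\rm SO}(2)\simeq AN$ explicit. In the Iwasawa decomposition ${\rm SU}(1,1)=KAN$ with $K={\rm SO}(2)$, $A\simeq(\R_+,\cdot)$, $N=(\R,+)$, the factorization $g=k\cdot(an)$ is unique, so each left coset $g\,{\rm SO}(2)$ contains exactly one element of $AN$; equivalently, $AN$ acts simply transitively on ${\rm SU}(1,1)/{\rm SO}(2)$. Hence ${\rm SU}(1,1)/{\rm SO}(2)$ and $AN$ are diffeomorphic, and since $AN$ is the affine group ${\mathcal Aff}$ realizing $\bH^1_\C=\calL$ (as recalled just before the proposition above), ${\rm SU}(1,1)/{\rm SO}(2)\simeq\calL$.

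Next, embedding ${\rm SO}(2)$ into ${\rm SU}(1,1)\times{\rm U}(1)$ as ${\rm SO}(2)\times\{1\}$, compatibly with \eqref{eq:embedding}, the coset space splits off the second factor:
$$
\bigl({\rm SU}(1,1)\times{\rm U}(1)\bigr)/\bigl({\rm SO}(2)\times\{1\}\bigr)\simeq\bigl({\rm SU}(1,1)/{\rm SO}(2)\bigr)\times{\rm U}(1)\simeq AN\times{\rm U}(1).
$$
Finally I would invoke the matrix model: the homomorphism $M$ of \eqref{eq:T} identifies $\fH^\star$ with the subgroup of ${\rm SU}(1,1)\times{\rm U}(1)$ consisting of the matrices $M(r,\theta,s)$, and under \eqref{eq:embedding} this subgroup is precisely $AN\times{\rm U}(1)$, since the ${\rm SU}(1,1)$-block $\left[\begin{matrix} r&is/r\\0&1/r\end{matrix}\right]$ exhausts $AN$ as $r>0$, $s\in\R$. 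Combining the two displays gives $\fH^\star\simeq AN\times{\rm U}(1)\simeq\bigl({\rm SU}(1,1)/{\rm SO}(2)\bigr)\times{\rm U}(1)$, which is the asserted identification; in particular it recovers the bijection $\fH^\star\simeq\calL\times S^1$ of the preceding proposition, the left $(AN\times{\rm U}(1))$-action on the quotient corresponding to left translations in $\fH^\star$.

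There is no real obstacle: the statement is a corollary in the honest sense. The only point demanding a word of care is that ${\rm SO}(2)$ is not normal in ${\rm SU}(1,1)\times{\rm U}(1)$, so the right-hand side of the displayed equality must be read as a homogeneous space — the symmetric space $\bH^1_\C$ times the circle — rather than as a group quotient; with that reading the equality is an isomorphism of $(AN\times{\rm U}(1))$-spaces, and $AN\times{\rm U}(1)$ is itself isomorphic to $\fH^\star$ as a group.
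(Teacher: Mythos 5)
Your proposal is correct and follows exactly the route the paper intends: the corollary is stated without proof, as an immediate consequence of the Iwasawa decomposition ${\rm SU}(1,1)=KAN$, the identification $\fH^\star\simeq AN\times{\rm U}(1)$ via the matrix model $M$, and the classical fact that $AN\simeq{\rm SU}(1,1)/{\rm SO}(2)$ realizes $\calL$. Your added remark that the quotient must be read as a homogeneous space (since ${\rm SO}(2)$ is not normal) is a worthwhile clarification the paper leaves implicit.
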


The set $\C_*\times\R$ is the set $\bH^\star=\partial\bH^2_\C\setminus\partial\bH^1_\C$ which is a component of the boundary of $\bH^2_\C\setminus\bH^1_\C$. We shall $\bH^\star$  the {\it truncated boundary of complex hyperbolic plane}. We shall show in Section \ref{sec:SPCR} that $\fH^\star$ and $\bH^\star$ have the same relationship with that of the Heisenberg group $\fH$ and $\partial\bH^2_\C\setminus\{\infty\}$.

For the moment we focus in the left action of $\fH^\star$ on $\bH^\star$. This is given by the rule: 
$$
(M(r,s,\theta),(z,t))\mapsto(re^{i\theta}z,r^2t+s);
$$
\begin{prop}
This action of $\fH^\star$ to $\bH^\star$ is free and simply transitive.
\end{prop}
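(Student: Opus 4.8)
The plan is to pass to coordinates on $\C_*\times\R$ and reduce everything to solving an elementary two–equation system. Identifying the element $M(r,s,\theta)\in\fH^\star$ with the point $(z_0,t_0)=(re^{i\theta},s)\in\C_*\times\R$, the stated rule reads
$$
(z_0,t_0)\cdot(z,t)=\bigl(z_0z,\ |z_0|^2t+t_0\bigr),\qquad (z,t)\in\bH^\star=\C_*\times\R .
$$
First I would confirm that this really is a left action: the identity $(1,0)$ of $\fH^\star$ fixes every $(z,t)$, and a one–line computation using $(z_0,t_0)\star(z_0',t_0')=(z_0z_0',\,t_0+t_0'|z_0|^2)$ shows
$$
\bigl((z_0,t_0)\star(z_0',t_0')\bigr)\cdot(z,t)=(z_0,t_0)\cdot\bigl((z_0',t_0')\cdot(z,t)\bigr),
$$
both sides being $\bigl(z_0z_0'z,\ |z_0z_0'|^2t+|z_0|^2t_0'+t_0\bigr)$.

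For transitivity I would simply exhibit, for given $(z,t)$ and $(z',t')$ in $\bH^\star$, the group element carrying the first to the second: since $z\neq0$ and $z'\neq0$, set $z_0=z'/z\in\C_*$ and $t_0=t'-|z_0|^2t\in\R$; then $(z_0,t_0)\cdot(z,t)=(z',t')$. Uniqueness of this element is immediate, since the first–coordinate equation $z_0z=z'$ forces $z_0=z'/z$ and then the second, $|z_0|^2t+t_0=t'$, forces $t_0=t'-|z_0|^2t$. Thus the action is simply transitive, and in particular free; freeness can also be read off directly, as $(z_0,t_0)\cdot(z,t)=(z,t)$ gives $z_0=1$ (using $z\neq0$) and then $t_0=0$, so the only element with a fixed point is the identity.

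There is really no obstacle here beyond bookkeeping; the one point worth keeping straight is the dictionary between the matrix model $M(r,s,\theta)$ and the coordinates $(z_0,t_0)=(re^{i\theta},s)$ on $\C_*\times\R$, after which the two solvability/uniqueness observations above already deliver the whole statement (recalling that for a group action "simply transitive" is precisely "free and transitive").
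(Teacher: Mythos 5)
Your proof is correct and follows essentially the same elementary route as the paper: translate the matrix action into the coordinates $(z_0,t_0)\cdot(z,t)=(z_0z,\,|z_0|^2t+t_0)$ and solve the resulting two-equation system for existence and uniqueness. You are in fact slightly more thorough than the paper, which only checks that the stabilizer is trivial and that $(1,0)$ can be sent to any point, whereas you also verify the left-action axiom and exhibit the unique element between two arbitrary points.
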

\begin{proof} 
Let $f=M(r,s,\theta)$ in $\fH^\star$ and let $p=(z,t)\in\C_*\times\R$ such that $fp=p$. Then $fp=p$ implies $f$ is the identity element of $\fH^\star$.

On the other hand, $M(|z|,\arg z,t)$ fixes infinity and maps $(1,0)$ to $(z,t)$; that is, the action is simply transitive.
\end{proof}
\begin{rem}
It is worth noting that this action is also extended to a free action on $\bH_\C^2\setminus\bH_\C^1$, where here 
$$
\bH_\C^1=\{(z_1,z_2)\in\bH_\C^2\;|\;z_2=0\}.
$$
The rule is:
$$
(M(r,s,\theta),(z_1,z_2))\mapsto(r^2z_1+is,re^{i\theta}z_2).
$$
This action is not simply transitive on the set $\overline{\bH_\C^2\setminus\bH_\C^1}$: For $(z_1,z_2)\in\bH_\C^2\setminus\bH_\C^1$ there is no $M(r,s,\theta)$ mapping $(-1,\sqrt{2})$ to $(z_1,z_2)$. 
\end{rem}

\subsection{Strictly pseudoconvex CR structure}\label{sec:SPCR}
From the bracket relations (\ref{eq:nontrivb}) we obtain only integrable CR structures.
In order to detect a strictly pseudoconvex CR structure for $\fH^\star$, we  define a
distinguished basis for the Lie algebra.
We consider the vector fields
\begin{equation}\label{eq-XYT}
\bX=X^*,
\quad \bY=Y^*-2T^*,
\quad\bT=Y^*.
\end{equation}
They are left-invariant and form a basis for the tangent space of $\fH^{\star}$. The only non-trivial Lie bracket relation between $\bX,\bY$ and $\bT$ is
$$
[\bX,\bY]=2(\bY-\bT).
$$
We shall also use the complex vector fields
$$
\bZ=\frac{1}{2}(\bX-i\bY),\quad\overline{\bZ}=\frac{1}{2}(\bX+i\bY).
$$
Notice that
$$
\bZ=zZ,\quad \overline{\bZ}=\overline{z}\overline{Z},
$$
where $Z$ and $\overline{Z}$ are the complex left-invariant vector fields of the Heisenberg group $\fH$.

From the contact structure of the form $\omega$ of the Heisenberg group $\fH$ we obtain a left-invariant contact form for the hyperbolic Heisenberg group $\fH^\star$ which we shall denote by $\omega^\star$. The following proposition describes this form.

\begin{prop}\label{prop:omegasta}
We consider the following 1-form in the hyperbolic Heisenberg group $\fH^\star$:
\begin{equation*}
\omega^\star=\frac{\omega}{2|z|^2},
\end{equation*}
where $\omega$ is the restriction to $\C_*\times\R$ of the contact form $\omega$ of the Heisenberg group $\fH$. Then the
manifold $(\fH^\star,\omega^\star)$ is contact. Explicitly:
 \begin{enumerate}
  \item[{(i)}] The form $\omega^\star$ of $\fH^\star$ is left-invariant.
  \item[{(ii)}] If $dm$ is the Haar measure for $\fH^\star$, then $dm=-\omega^\star \wedge d\omega^\star$.
  \item[{(iii)}] The kernel of $\omega^\star$ is 
$$
\ker(\omega^\star)=\langle\bX,\bY\rangle.
$$  
  \item[{(iv)}] The Reeb vector field for $\omega^\star$ is $\bT$.
  \item[{(v)}] Let $\cH^\star=\ker(\omega^\star)$ and consider the almost complex structure $J$ defined on $\cH$ by
 $$
 J\bX=\bY,\quad J\bY=-\bX.
 $$
 Then  $J$ is compatible with $d\omega^\star$ and moreover, $\cH^\star$ is a strictly pseudoconvex {\rm CR} structure; that is, $d\omega^\star$ is positively oriented on $\cH^\star$. 
 \end{enumerate}
\end{prop}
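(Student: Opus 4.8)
The whole proposition reduces to a single computation, that of $d\omega^\star$. Writing $\omega^\star = f\,\omega$ with $f = \frac{1}{2|z|^2}$, which is smooth and strictly positive on $\C_*\times\R$, I would first record $d\omega^\star = df\wedge\omega + f\,d\omega$. Using $d\omega = 4\,dx\wedge dy$ and $df = -\frac{x\,dx+y\,dy}{(x^2+y^2)^2}$, the two contributions proportional to $dx\wedge dy$ cancel and one is left with the clean expression
$$
d\omega^\star = -\frac{(x\,dx + y\,dy)\wedge dt}{(x^2+y^2)^2}.
$$
Moreover $\omega^\star\wedge d\omega^\star = f\,\omega\wedge df\wedge\omega + f^2\,\omega\wedge d\omega = f^2\,\omega\wedge d\omega$ (the first summand vanishes because $\omega\wedge\omega=0$), so $(\fH^\star,\omega^\star)$ is already a contact manifold for the sole reason that $(\fH,\omega)$ is one and $f$ never vanishes on $\C_*\times\R$.

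For (i) and (iii) I would substitute the adapted left-invariant frame $\bX,\bY,\bT$ of (\ref{eq-XYT}) into $\omega$: a one-line evaluation gives $\omega(\bX)=\omega(\bY)=0$ and $\omega(\bT)=2|z|^2$, hence $\omega^\star(\bX)=\omega^\star(\bY)=0$ and $\omega^\star(\bT)=1$. Since $\omega^\star$ is constant on a left-invariant frame it is left-invariant, and since $\bX,\bY$ are linearly independent on $\C_*\times\R$ they span the $2$-dimensional kernel of the $1$-form $\omega^\star$. For (ii) I would plug $d\omega^\star$ back into $\omega^\star\wedge d\omega^\star$ (equivalently use $\omega^\star\wedge d\omega^\star = f^2\,\omega\wedge d\omega$) and compare with the Haar form $\dfrac{dx\wedge dy\wedge dt}{(x^2+y^2)^2}$ recorded in Section \ref{sec:dlg}, with the orientation convention fixed there for $\fH$.

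For (iv), the Reeb field is the unique vector field $\xi$ with $\omega^\star(\xi)=1$ and $\iota_\xi\,d\omega^\star=0$; since $\bT = x\frac{\partial}{\partial y}-y\frac{\partial}{\partial x}$ annihilates both $dt$ and $x\,dx+y\,dy$, the explicit formula for $d\omega^\star$ gives $\iota_{\bT}\,d\omega^\star=0$, and combined with $\omega^\star(\bT)=1$ this identifies $\bT$ as the Reeb field. For (v), on the $2$-dimensional bundle $\cH^\star$ the identity $d\omega^\star(JV,JW)=d\omega^\star(V,W)$ is automatic from $J^2=-\mathrm{Id}$, so the only genuine content is positivity: evaluating on the frame, $d\omega^\star(\bX,\bY)=2>0$, whence $d\omega^\star(V,JV)>0$ for every nonzero $V\in\cH^\star$ and the CR structure is strictly pseudoconvex.

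All the computations are elementary; the only point that requires attention is organisational, namely that one must work with the adapted frame $\{\bX,\bY,\bT\}$ of (\ref{eq-XYT}) rather than the naive left-invariant frame $\{X^*,Y^*,T^*\}$, because the conformal rescaling by $f$ moves the Reeb direction off $\frac{\partial}{\partial t}$ (in contrast with $\fH$, whose Reeb field is $T=\frac{\partial}{\partial t}$), together with keeping the sign and orientation conventions consistent with those of Section \ref{sec:heis}.
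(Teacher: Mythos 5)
Your proof is correct and follows essentially the same route as the paper: both reduce everything to computing $d\omega^\star$ and evaluating it on the adapted left-invariant frame $\{\bX,\bY,\bT\}$, and your handling of (i) — constancy of the coefficients of $\omega^\star$ in a left-invariant coframe — is a slightly cleaner substitute for the paper's direct computation of $L_{(w,s)}^*\omega^\star$. One caveat on (ii), which you defer to "the orientation convention fixed for $\fH$": carrying your own computation through gives $\omega^\star\wedge d\omega^\star=f^2\,\omega\wedge d\omega=\frac{dt\wedge dx\wedge dy}{|z|^4}=+\frac{dx\wedge dy\wedge dt}{|z|^4}$, which is $+dm$ with the Haar form as recorded in Section \ref{sec:dlg}, so the minus sign in the statement (asserted in the paper's proof via the slip $dt\wedge dx\wedge dy=-dx\wedge dy\wedge dt$) is an orientation discrepancy you should resolve explicitly rather than leave implicit.
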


\begin{proof}
\noindent (i) We fix an arbitrary $(w,s)\in\fH^\star$ and consider the left translation
$$
F(z,t)=L_{(w,s)}(z,t)=(wz,s+t|w|^2).
$$
Then
\begin{eqnarray*}
&&
 F^*(\omega^\star)=\frac{d(s+t|w|^2)+2\Im(\overline{wz}d(wz))}{2|w|^2|z|^2}
 =\frac{|w|^2\left(dt+2\Im(\overline{z}dz)\right)}{2|w|^2|z|^2}=\omega^\star.
\end{eqnarray*}
\noindent (ii) We may write
$$
\omega^*=\frac{dt}{2|z|^2}+d\arg z,
$$
so that
$$
d\omega^*=2\left(\frac{d(|z|^2)}{2|z|^2}\right)\wedge \left(-\frac{dt}{2|z|^2}\right).
$$
We calculate
$$
\omega^\star\wedge d\omega^\star=\frac{dt\wedge dx\wedge dy}{|z|^4}=-dm.
$$
\noindent (iii) is obvious.
 
\noindent (iv)  Clearly, 
$\omega^\star(\bT)=1$ and it is straightforward to show that $\bT\in\ker(d\omega^\star)$. 

\noindent (v) we first observe that the dual basis to $\{\bX,\bY,\bT\}$ is $\{\phi^*,\psi^*,\omega^\star\}$,
where
\begin{equation}\label{eq:basis-cotr}
\phi^\star=\frac{d(|z|^2)}{2|z|^2},\quad \psi^\star=-\frac{dt}{2|z|^2}.
\end{equation}
In this basis,
$$
d\phi^\star=0,\quad d\psi^\star=-2\;\phi^*\wedge\psi^*,\quad d\omega^\star=2\;\phi^\star\wedge\psi^\star.
$$
For the symplectic form $d\omega^\star$ we thus have
$$
d\omega^\star(\bX,\bY)=2,\quad d\omega^\star(\bX,\bT)=d\omega^\star(\bY,\bT)=0.
$$
\end{proof}

We now consider the truncated boundary of complex hyperbolic plane
$
\bH^{\star}=\partial\bH^2_\C\setminus\partial\bH^1_\C.
$
The set $\bH^{\star}$ comprises of points $(z_1,z_2)$ of $\C^2$ such that
$$
\rho^{\star}(z_1,z_2)=\frac{\rho(z_1,z_2)}{|z_2|^2}=0,
$$
where $\rho$ is the defining function of $\bH^2_\C$ as in (\ref{eq:Sieg}). Let $\Psi:\fH^{\star}\to\bH^{\star}$ be the bijection given by
$$
\Psi(z,t)=(-|z|^2+it,\sqrt{2}z).
$$
\begin{prop}
There is a strictly pseudoconvex CR structure on $\bH^{\star}$ and the map $\Psi$ is CR. Also, if $\eta^{\star}$ is the corresponding contact form, then $\Psi^*\eta^{\star}=\omega^\star$, where $\Psi$ is as above and $\omega^\star$ is the contact form of the hyperbolic Heisenberg group $\fH^\star$.
\end{prop}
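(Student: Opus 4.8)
The plan is to follow the same pattern as the proof for $\partial\calS$, using the observation that $\bH^\star$ is just $\partial\calS\setminus\{z_2=0\}$ equipped with a conformally rescaled defining function. First I would take $\rho^\star=\rho/|z_2|^2$ as defining function of $\bH^\star$ in $\C^2\setminus\{z_2=0\}$ (note that $\{\rho^\star<0\}=\calS\setminus\{z_2=0\}$ and $d\rho^\star\neq 0$ there) and compute its holomorphic differential,
$$
\partial\rho^\star=\frac{\partial\rho}{|z_2|^2}-\frac{\rho\,\overline{z_2}}{|z_2|^4}\,dz_2 .
$$
On $\bH^\star$ we have $\rho=0$, so the second term drops out and $\partial\rho^\star=|z_2|^{-2}(dz_1+\overline{z_2}\,dz_2)$. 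Hence along $\bH^\star$ the kernel of $\partial\rho^\star$ coincides with the kernel of $\partial\rho$ and is generated by the same $(1,0)$-field $Z'=-\overline{z_2}\,\partial_{z_1}+\partial_{z_2}$; in other words the CR structure $(\cH^\star=\ker\eta^\star,J)$ with $\eta^\star=\Im(\partial\rho^\star)=|z_2|^{-2}\,\omega'$ on $\bH^\star$ is precisely the restriction of the CR structure of $\partial\calS$.

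For strict pseudoconvexity I would avoid computing $i\partial\overline\partial\rho^\star$ directly and instead exploit that $\eta^\star=|z_2|^{-2}\omega'$ differs from $\omega'$ by a strictly positive factor: for $X\in\cH^\star=\ker\omega'$,
$$
d\eta^\star(X,JX)=\bigl(d(|z_2|^{-2})\wedge\omega'\bigr)(X,JX)+|z_2|^{-2}\,d\omega'(X,JX)=|z_2|^{-2}\,d\omega'(X,JX)>0 ,
$$
the first term vanishing because $\omega'(X)=\omega'(JX)=0$ and the positivity being the strict pseudoconvexity of $\partial\calS$. Thus $d\eta^\star$ is positively oriented on $\cH^\star$. (Equivalently one notes $i\partial\overline\partial\rho^\star=|z_2|^{-2}\,i\partial\overline\partial\rho$ when restricted to $\cH^\star$ on $\bH^\star$, the cross terms containing $\partial\rho$ or $\overline\partial\rho$ dying on the horizontal bundle.)

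It remains to handle $\Psi$. Since $\Psi:\fH\to\partial\calS$ is already known to be a CR diffeomorphism, since $\C_*\times\R$ is open in $\fH$, and since $\Psi$ maps $\fH^\star$ bijectively onto $\bH^\star=\partial\calS\setminus\{z_2=0\}$ (as $\Psi^{-1}(\{z_2=0\})=\V$), the restriction $\Psi:\fH^\star\to\bH^\star$ is automatically CR for the structures above. Finally, using $\Psi^*\omega'=\omega$ together with $z_2\circ\Psi=\sqrt2\,z$, so $|z_2|^2\circ\Psi=2|z|^2$,
$$
\Psi^*\eta^\star=\Psi^*\!\left(\frac{\omega'}{|z_2|^2}\right)=\frac{1}{2|z|^2}\,\Psi^*\omega'=\frac{\omega}{2|z|^2}=\omega^\star ,
$$
which is the assertion. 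The only delicate step is the first one: checking that the $\rho$-term in $\partial\rho^\star$ genuinely vanishes on $\bH^\star$, so that the quotient defining function produces the same horizontal bundle as $\rho$; once that is in hand, everything else is bookkeeping with the positive conformal factor $|z_2|^{-2}$.
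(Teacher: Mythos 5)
Your proof is correct and follows the same overall skeleton as the paper's (rescaled defining function $\rho^\star=\rho/|z_2|^2$, contact form $\eta^\star=\Im(\partial\rho^\star)=\omega'/|z_2|^2$, then pull back by $\Psi$), but you verify the two substantive steps differently. Where the paper simply writes down $\partial\rho^\star=(dz_1+\overline{z_2}dz_2)/|z_2|^2$, you correctly note the extra term $-\rho\,\overline{z_2}|z_2|^{-4}dz_2$ and observe it dies on $\{\rho=0\}$ --- a point the paper glosses over, and arguably the one place where care is actually needed. For strict pseudoconvexity the paper computes the Levi form $\partial\overline{\partial}\rho^\star$ explicitly and evaluates it on the generator $Z=-|z_2|^2\partial_{z_1}+z_2\partial_{z_2}$ of $\ker(\partial\rho^\star)$; you instead get positivity for free from the positive conformal factor $|z_2|^{-2}$, using that $d(|z_2|^{-2})\wedge\omega'$ vanishes on $\ker\omega'$. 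Likewise, the paper re-verifies that $\Psi$ is CR by computing $\Psi_*(\bZ)$ from scratch, whereas you deduce it by restriction from the already-proved fact that $\Psi:\fH\to\partial\calS$ is CR; this is legitimate, but it silently uses that the CR structure $(\cH^\star,J)$ of $\fH^\star$ is the restriction of that of $\fH$ (which holds because $\bZ=zZ$ spans the same $(1,0)$ horizontal bundle as $Z$, and $\omega^\star$ is a positive multiple of $\omega$) --- worth one explicit sentence. Your approach buys less computation and makes the conformal-rescaling mechanism transparent; the paper's direct computations have the advantage of exhibiting the intrinsic generator $Z$ and the Levi form of $\bH^\star$ explicitly, which are reused implicitly elsewhere.
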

\begin{proof}
It follows from $\rho^{\star}(z_1,z_2)=\frac{\rho(z_1,z_2)}{|z_2|^2}$ that
$$\partial \rho^{\star}=\frac{dz_1+\overline{z_2}dz_2}{|z_2|^2},\qquad 
\bar{\partial} \rho^{\star}=\frac{d\overline{z_1}+z_2d\overline{z_2}}{|z_2|^2}.$$
A CR structure is defined by the (1, 0) vector field
$Z=-|z_2|^2\frac{\partial}{\partial z_1}+z_2\frac{\partial}{\partial z_2}$ which generates $\ker(\partial\rho^\star)$. Direct calculations show that the Levi form is
$$\partial\bar{\partial} \rho^{\star}=-\frac{\overline{z_2}}{|z_2|^4}dz_2\wedge d\overline{z_1},$$ therefore
$$
\partial\bar{\partial}{ \rho}^{\star}(Z, \bar{Z})=-\frac{\overline{z_2}}{|z_2|^4}dz_2\wedge d\overline{z_1}\left(-|z_2|^2\frac{\partial}{\partial z_1}+z_2\frac{\partial}{\partial z_2}, -|z_2|^2\frac{\partial}{\partial \overline{z_1}}+\overline{z_2}\frac{\partial}{\partial \overline{z_2}}\right)\\
=1>0.
$$
It follows that the Levi form is positively oriented on the CR structure. Now,
\begin{eqnarray*}
\Psi_{\ast}(\bZ)&=&\bZ(-|z|^2+it)\frac{\partial}{\partial z_1}+\bZ(-|z|^2-it)\frac{\partial}{\partial \overline{z_1}}
                                            +\bZ(\sqrt{2}z)\frac{\partial}{\partial z_2}
                                            +\bZ(\sqrt{2}\overline{z})\frac{\partial}{\partial \overline{z_2}}\\
                                            &=&\left(z\frac{\partial}{\partial z}+i|z|^2\frac{\partial}{\partial t}\right)(-|z|^2+it)\frac{\partial}{\partial z_1}+
                                            \left(z\frac{\partial}{\partial z}+i|z|^2\frac{\partial}{\partial t}\right)(\sqrt{2}z)\frac{\partial}{\partial z_2}\\
                                            &=&-|z_2|^2\frac{\partial}{\partial z_1}+z_2\frac{\partial}{\partial z_2}=Z                                                                          
\end{eqnarray*}
and thus $\Psi$ is CR. On the other hand, the contact form of $\bH^\star$ is
$$
\eta^{\star}=\Im(\partial \rho^{\star})=\frac{dy_1+\Im(\overline{z_2}dz_2)}{|z_2|^2}
$$
and we have $\Psi^*\eta^\star=\omega^\star$.
The proof is complete.
\end{proof}
\subsection{Sub-Riemannian metric}\label{sec:sr}
The sub-Riemannian product is defined by the relations
\begin{equation}\label{eq:subRstar}
\langle \bX,\bX\rangle=\langle \bY,\bY\rangle=1,\quad\langle \bX,\bY\rangle=\langle \bY,\bX\rangle=0.
\end{equation}
The corresponding tensor is
$$
g^\star_{cc}=(\phi^\star)^2+(\psi^\star)^2=\frac{(d(|z|^2))^2+dt^2}{4|z|^4},
$$
and defines a K\"ahler structure on the horizontal tangent bundle $\cH^\star$. 

At that point we consider the left hyperbolic plane
$$
\bH^1_\C=\{\zeta\in\C\;|\;\Re(\zeta)<0\}
$$
with its standard K\"ahler metric tensor
$$
g_h=\frac{|d\zeta|^2}{4\Re^2(\zeta)}.
$$
\begin{defn}
The Kor\'anyi map $\alpha:\fH^\star\to \bH^1_\C=\calL$ is defined by
$$
\alpha(z,t)=-|z|^2+it.
$$
\end{defn}
\begin{prop}
Let $\alpha:\fH^\star\to\calL$ be the Kor\'anyi map as above. The sub-Riemannian metric  $g^\star_{cc}$ of $\fH^\star$  satisfies
$$
g^\star_{cc}=\alpha^*(g_h).
$$
Moreover, $\alpha_*$ is a linear isometry from the horizontal space $\cH^\star=\{\bX,\bY\}$ of $\fH^\star$ endowed with the sub-Riemannian metric $g^\star_{cc}$ to the tangent space ${\rm T}(\calL)$ of $\calL$ endowed with the hyperbolic metric $g_h$.
\end{prop}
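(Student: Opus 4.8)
The plan is to verify both assertions by a direct computation in the real coordinates $\zeta=\xi+i\eta$ on $\calL=\bH^1_\C$, in which $g_h=(d\xi^2+d\eta^2)/(4\xi^2)$.

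First I would pull back $g_h$ along $\alpha$. Since $\alpha(z,t)=-|z|^2+it$, we have $\xi\circ\alpha=-|z|^2$ and $\eta\circ\alpha=t$, hence $\alpha^*(d\xi)=-d(|z|^2)$, $\alpha^*(d\eta)=dt$, and $\xi^2\circ\alpha=|z|^4$. Substituting,
$$
\alpha^*g_h=\frac{(d(|z|^2))^2+dt^2}{4|z|^4},
$$
which is exactly the expression for $g^\star_{cc}=(\phi^\star)^2+(\psi^\star)^2$ recorded at the start of this subsection via $(\ref{eq:basis-cotr})$. This proves $g^\star_{cc}=\alpha^*g_h$.

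Next, for the isometry statement I would push forward the horizontal frame $\bX,\bY$ of $(\ref{eq-XYT})$, written in the real basis $(\ref{eq:basis})$ as $\bX=x\frac{\partial}{\partial x}+y\frac{\partial}{\partial y}$ and $\bY=x\frac{\partial}{\partial y}-y\frac{\partial}{\partial x}-2(x^2+y^2)\frac{\partial}{\partial t}$. Applying these to $\xi\circ\alpha=-(x^2+y^2)$ and $\eta\circ\alpha=t$ yields $\bX(\xi\circ\alpha)=-2|z|^2$, $\bX(\eta\circ\alpha)=0$, $\bY(\xi\circ\alpha)=0$, $\bY(\eta\circ\alpha)=-2|z|^2$, so that
$$
\alpha_*\bX=-2|z|^2\,\frac{\partial}{\partial\xi},\qquad \alpha_*\bY=-2|z|^2\,\frac{\partial}{\partial\eta}.
$$
These are linearly independent at each point, so $\alpha_*$ restricts to a linear isomorphism $\cH^\star=\langle\bX,\bY\rangle\to{\rm T}(\calL)$; one also computes $\alpha_*\bT=\alpha_*Y^*=0$, so $\ker\alpha_*$ is exactly the Reeb line $\langle\bT\rangle$ and $\cH^\star$ is a complement to it. Finally, since $g_h(\frac{\partial}{\partial\xi},\frac{\partial}{\partial\xi})=g_h(\frac{\partial}{\partial\eta},\frac{\partial}{\partial\eta})=1/(4\xi^2)=1/(4|z|^4)$ and $g_h(\frac{\partial}{\partial\xi},\frac{\partial}{\partial\eta})=0$ along $\alpha$, we get
$$
g_h(\alpha_*\bX,\alpha_*\bX)=g_h(\alpha_*\bY,\alpha_*\bY)=1,\qquad g_h(\alpha_*\bX,\alpha_*\bY)=0,
$$
which are precisely the relations $(\ref{eq:subRstar})$ defining the sub-Riemannian product. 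Hence $\alpha_*|_{\cH^\star}$ is a linear isometry onto $({\rm T}\calL,g_h)$.

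The computation is routine, so I do not anticipate a genuine obstacle; the only point demanding care is the bookkeeping about which frame is horizontal — one must use the deformed fields $\bX,\bY$ of $(\ref{eq-XYT})$, not $X^*,Y^*$, and note that the Reeb field $\bT=Y^*$ lies in $\ker\alpha_*$, so that the degenerate rank-two tensor $\alpha^*g_h$ genuinely coincides with $g^\star_{cc}$ over all of $\fH^\star$ and restricts to a bona fide isometry onto the entire tangent space of $\calL$.
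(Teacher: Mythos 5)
Your proof is correct and follows essentially the same route as the paper: you compute $\alpha_*\bX$, $\alpha_*\bY$, $\alpha_*\bT$ in the coordinates $\zeta=\xi+i\eta$ and check that the images of $\bX,\bY$ are orthonormal for $g_h$ while $\bT$ spans the kernel, exactly as in the paper's argument via the frame $\Xi=-2\xi\partial_\xi$, ${\rm H}=-2\xi\partial_\eta$ (your $\alpha_*\bX=-2|z|^2\partial_\xi=2\xi\partial_\xi$ differs from the paper's $\Xi$ only by a sign, which is immaterial for the isometry claim). The additional direct pullback computation $\alpha^*g_h=\bigl((d(|z|^2))^2+dt^2\bigr)/(4|z|^4)=g^\star_{cc}$ is a clean verification of the first assertion that the paper leaves implicit.
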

\begin{proof}
Let $\zeta=\xi+i\eta$ be the complex coordinate on $\calL$. Then
$$
g_h=ds^2=\frac{d\xi^2+d\eta^2}{4\xi^2},
$$
and the vector fields
$$
\Xi=-2\xi\partial_\xi,\quad {\rm H}=-2\xi\partial_\eta,
$$
form an orthonormal frame for ${\rm T}(\calL)$. The matrix $D\alpha$ of the differential $\alpha_*:{\rm T}(\fH^\star)\to{\rm T}(\calL)$ is
$$
D\alpha=\left(\begin{matrix}
-2x&-2y&0\\
0&0&1\end{matrix}\right).
$$
Straightforward calculations then show that 
$$
\alpha_*(\bX)=\Xi,\quad \alpha_*(\bY)={\rm H},\quad \alpha_*(\bT)=0.
$$
This proves our claim.
\end{proof}
Since the integral curves of the Reeb field $\bT$ are Euclidean circles with centres on the $t$-axis, we also have
\begin{cor}
$\alpha:\fH^\star\to\calL$ defines a circle bundle. Also, $\fH^\star$ is diffeomorphic to $\calL\times S^1$ with the diffeomorphism given by
$$
(z,t)\mapsto(\alpha(z,t),\arg(z)).
$$ 
\end{cor}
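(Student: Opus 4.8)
The plan is to check directly that $\alpha$ is a surjective submersion whose fibres are circles, and then to write down an explicit global trivialisation; the map displayed in the statement will be (essentially) that trivialisation. First I would note that $\alpha$ is a submersion: by the preceding proposition $\alpha_*$ sends the horizontal orthonormal frame $\{\bX,\bY\}$ to the orthonormal frame $\{\Xi,{\rm H}\}$ of ${\rm T}(\calL)$, so $\alpha_*$ is onto at every point, while $\alpha_*\bT=0$ shows that $\bT$ spans the vertical line $\ker(\alpha_*)$. Since $\alpha(z,t)=-|z|^2+it$, the image of $\alpha$ is exactly $\calL$ (as $-|z|^2$ ranges over $(-\infty,0)$ and $t$ over $\R$), and the fibre over $\zeta=-r^2+i\tau$ (with $r>0$, $\tau\in\R$) is $\{(z,\tau)\;:\;|z|=r\}$, a Euclidean circle. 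The flow of $\bT=x\partial_y-y\partial_x$ through $(z_0,\tau)$ is $s\mapsto(z_0e^{is},\tau)$, so each such fibre is a single integral curve of $\bT$; this justifies the remark preceding the corollary, and identifies the fibres of $\alpha$ with the orbits of the free smooth action of $S^1={\rm U}(1)$ on $\fH^\star$ given by $e^{i\theta}\cdot(z,t)=(e^{i\theta}z,t)$.

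Next I would exhibit the trivialisation. Define $\Phi:\fH^\star\to\calL\times S^1$ by $\Phi(z,t)=\bigl(\alpha(z,t),\,z/|z|\bigr)$; this is smooth on $\C_*\times\R$ and satisfies ${\rm pr}_1\circ\Phi=\alpha$. Its candidate inverse is $\Psi:\calL\times S^1\to\fH^\star$, $\Psi(\xi+i\eta,\,u)=\bigl(\sqrt{-\xi}\;u,\;\eta\bigr)$, which is smooth because $\xi<0$ on $\calL$. A short computation (using $-\Re\,\alpha(z,t)=|z|^2$, $\Im\,\alpha(z,t)=t$, and $z=|z|\cdot z/|z|$) gives $\Phi\circ\Psi=\mathrm{id}$ and $\Psi\circ\Phi=\mathrm{id}$, so $\Phi$ is a diffeomorphism. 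Since ${\rm pr}_1\circ\Phi=\alpha$ and $\Phi$ carries the $S^1$-action above to multiplication on the second factor of $\calL\times S^1$, this exhibits $\alpha:\fH^\star\to\calL$ as a trivial principal $S^1$-bundle, in particular a circle bundle; writing $z/|z|=e^{i\arg z}$ gives exactly the diffeomorphism in the statement.

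I do not expect a genuine obstacle here: everything reduces to elementary properties of the explicit formula for $\alpha$. The only points needing slight care are treating $\arg z$ (equivalently $z/|z|$) as an $S^1$-valued rather than $\R$-valued function so that $\Phi$ is globally defined, and observing that $\alpha$ encodes precisely the pair $\bigl(|z|^2,\,t\bigr)$ while the $S^1$-factor encodes the phase of $z$, so that these two pieces of data are complementary and together reconstruct $(z,t)\in\C_*\times\R$.
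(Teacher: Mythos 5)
Your proposal is correct and follows exactly the route the paper intends (the paper states this corollary without proof, justifying it only by the remark that the integral curves of the Reeb field $\bT$ are the Euclidean circles forming the fibres of $\alpha$): you verify that the fibres of $\alpha$ are the $\bT$-orbits and that the displayed map, with $\arg z$ read as the $S^1$-valued phase $z/|z|$, is a global trivialisation with explicit smooth inverse. Your write-up simply supplies the elementary checks the paper omits, and all of them are accurate.
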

Moreover, we have
\begin{cor}\label{cor:ccisom}
The set of hyperbolic Heisenberg isometries ${\rm Isom}(g^\star_{cc})$ is ${\rm SU}(2,1)\times {\rm U}(1)$.
\end{cor}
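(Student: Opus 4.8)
The plan is to compute ${\rm Isom}(g^\star_{cc})$ by transporting the problem to the hyperbolic plane through the Kor\'anyi map, using the preceding proposition, which gives $g^\star_{cc}=\alpha^*g_h$ and the fact that $\alpha_*$ restricts to a fibrewise linear isometry of $\cH^\star$ onto ${\rm T}(\calL)$, while $\alpha\colon\fH^\star\to\calL$ is a circle bundle whose fibres are the integral curves of the Reeb field $\bT$. First I would show that any $\Phi\in{\rm Isom}(g^\star_{cc})$ preserves the horizontal distribution $\cH^\star$: the metric $g^\star_{cc}$ is supported exactly on $\cH^\star$, and since $\cH^\star$ is bracket-generating it is recovered intrinsically from the sub-Riemannian structure. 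From the bracket relation $[\bX,\bY]=2(\bY-\bT)$ the Reeb line $\R\bT$ is then canonically determined by $\cH^\star$, so $\Phi$ carries integral curves of $\bT$ to integral curves of $\bT$; hence $\Phi$ permutes the fibres of $\alpha$ and descends to a smooth self-map $\phi$ of $\calL$ with $\alpha\circ\Phi=\phi\circ\alpha$.

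Combining $\Phi^*g^\star_{cc}=g^\star_{cc}$ with $g^\star_{cc}=\alpha^*g_h$ and the linear-isometry property of $\alpha_*$ shows immediately that $\phi$ is an isometry of $(\calL,g_h)$, producing a homomorphism $\pi\colon{\rm Isom}(g^\star_{cc})\to{\rm Isom}(\bH^1_\C)$. Its kernel consists of the isometries that fix every fibre and induce the identity on the base, which are precisely the rotations along the circle fibres, i.e. the ${\rm U}(1)$ factor. For surjectivity I would use the homogeneous-space description $\fH^\star=({\rm SU}(1,1)\times{\rm U}(1))/{\rm SO}(2)$: the left action of ${\rm SU}(1,1)\times{\rm U}(1)$ on this quotient is by $g^\star_{cc}$-isometries, since each left translation preserves the cotangent frame $\phi^\star,\psi^\star$ exactly as in the computation of Proposition \ref{prop:omegasta}; the ${\rm SU}(1,1)$ factor covers ${\rm Isom}^+(\bH^1_\C)$ and the ${\rm U}(1)$ factor acts along the fibres, while adjoining the orientation-reversing involution $j(z,t)=(\overline z,-t)$ accounts for the reflections of ${\rm Isom}(\bH^1_\C)$.

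The main obstacle is the rigidity step, namely ruling out any isometry beyond those just produced. I expect to settle this by a Killing-field count: the isometry algebra of a contact sub-Riemannian $3$-manifold is at most $4$-dimensional, whereas the left-invariant fields generated by ${\rm SU}(1,1)\times{\rm U}(1)$ already span a $4$-dimensional space of $g^\star_{cc}$-Killing fields, so the identity component of ${\rm Isom}(g^\star_{cc})$ is exactly the image of ${\rm SU}(1,1)\times{\rm U}(1)$ and $\pi$ is onto. Finally, the matrix model of Section \ref{sec:mm} realizes all of these isometries inside the holomorphic isometry group of $\bH^2_\C$: through the embedding (\ref{eq:embedding}), ${\rm SU}(1,1)\times{\rm U}(1)$ is identified with the stabilizer of the complex line $\partial\bH^1_\C$ acting on $\bH^\star$, the fibre rotations contributing the ${\rm U}(1)$ and $j$ the orientation-reversing component. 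This is the realization inside ${\rm SU}(2,1)\times{\rm U}(1)$ asserted in the statement.
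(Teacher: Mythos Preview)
Your descent argument---that an isometry of $g^\star_{cc}$ must preserve the Reeb fibres of $\alpha$ and hence covers an isometry of $(\calL,g_h)$---is exactly how the paper begins. Where you diverge is in the rigidity step. The paper does not use any Killing-field dimension count or exact-sequence argument; instead, once the covered map $f\in{\rm SU}(1,1)$ is in hand, it simply reads off $|f_I|^2$ and $f_3$ from the relation $f\circ\alpha=\alpha\circ F$, obtaining
\[
|f_I|^2(z,t)=\frac{|z|^2}{|ic(-|z|^2+it)+d|^2},\qquad
f_3(z,t)=\Im\!\left(\frac{a(-|z|^2+it)+ib}{ic(-|z|^2+it)+d}\right),
\]
and then observes that the only remaining freedom in $f_I$ is a unimodular factor $e^{i\theta}$, which gives the ${\rm U}(1)$. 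This explicit computation replaces, in one line, everything you propose to do with the homomorphism $\pi$, its kernel, surjectivity, and the Killing-field bound.

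Your route is not wrong, but the Killing-field step is doing more work than necessary and is itself a nontrivial assertion: the statement that the isometry algebra of a contact sub-Riemannian $3$-manifold has dimension at most $4$ is a theorem (it follows from the normal-form classification of contact sub-Riemannian structures, or from the Tanaka--Webster formalism) that you would have to cite or prove. The paper sidesteps this entirely by solving directly for the form of $F$. If you want to keep your structural outline, you could replace the dimension count by the same explicit computation the paper uses to identify the lifts of a fixed $f\in{\rm SU}(1,1)$; that closes the argument without invoking any external bound.
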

\begin{proof}
If $F=(f_I,f_3):\fH^\star\to\fH^\star$ is an isometry of $g^\star_{cc}$ then it has to preserve the fibres of $\alpha$. Therefore it defines a map $f:\calL\to\calL$ such that $f\circ\alpha=\alpha\circ F$. But $f$ has to be an isometry, therefore
$$
F(\zeta)=\frac{a\zeta+ib}{ic\zeta+d},\quad \left[\begin{matrix}
a&ib\\
ic&d\end{matrix}\right]\in{\rm SU}(2,1).
$$
From $f\circ\alpha=\alpha\circ F$ we then obtain
$$
|f_I|^2(z,t)=\frac{|z|^2}{|ic(-|z|^2+it)+d},\quad f_3(z,t)=\Im\left(\frac{a(-|z|^2+it)+ib}{ic(-|z|^2+it)+d}\right).
$$
Thus, for some $\theta\in\R$ we have
$$
f_I(z,t)=\frac{ze^{i\theta}}{ic(-|z|^2+it)+d},
$$
and our claim is proved, since the action of ${\rm SU}(1,1)\times{\rm U}(1)$ on $\fH^\star$ is given by self-mappings of the form
\begin{equation}\label{eq:fSU}
F(z,t)=\left(\frac{ze^{i\theta}}{ic(-|z|^2+it)+d},\;\Im\left(\frac{a(-|z|^2+it)+ib}{ic(-|z|^2+it)+d}\right)\right),
\end{equation}
as this follows from (\ref{eq:embedding}).
\end{proof}
\subsection{Horizontal curves, CC-distance}\label{sec:hc}
Let $\gamma:[a,b]\to\C_{*}\times\R$ be a smooth curve. If $\dot\gamma$ is the tangent vector field along $\gamma$ we may write
\begin{eqnarray*}
\dot\gamma(s)&=&\dot x(s)\partial_x+\dot y(s)\partial_y+\dot t(s)\partial_t\\
&=&\frac{\dot x(s)}{|z(s)|^2}(x(s)\bX-y(s)\bT)+\frac{\dot y(s)}{|z(s)|^2}(x(s)\bT+y(s)\bX)+\frac{\dot t(s)}{2|z(s)|^2}(\bT-\bY)\\
&=&\frac{x(s)\dot x(s)+y(s)\dot y(s)}{|z(s)|^2}\bX-\frac{\dot t(s)}{2|z(s)|^2}\bY+\frac{\dot t(s)+2(x(s)\dot y(s)-y(s)\dot x(s))}{2|z(s)|^2}\bT.
\end{eqnarray*}
Therefore
\begin{eqnarray*}
&&
\langle \dot\gamma, \bX\rangle_{\gamma(s)}=\Re\left(\frac{\dot z(s)}{z(s)}\right),\\
&&
\langle \dot\gamma, \bY\rangle_{\gamma(s)}=\Im\left(\frac{\dot z(s)}{z(s)}\right)-\frac{\dot t(s)+2\Im(\overline{z(s)}\dot z(s))}{2|z(s)|^2},\\
&&
\langle \dot\gamma, \bT\rangle_{\gamma(s)}=\frac{\dot t(s)+2\Im(\overline{z(s)}\dot z(s))}{2|z(s)|^2}.
\end{eqnarray*}
\begin{prop}\label{prop-RAlength}
If $\gamma(s)=(z(s),t(s))$, $z(s)=x(s)+iy(s)$, is as above
then $\gamma$ is horizontal if and only if $$\langle\dot\gamma,\bT\rangle_{\gamma(s)}=0,$$ for a.e. $s$. In this case, its horizontal length is
\begin{equation*}
 \ell_{\fH^\star}(\gamma)=\int_a^b\sqrt{\langle\dot\gamma,\bX\rangle^2_{\gamma(s)}+\langle\dot\gamma,\bY\rangle^2_{\gamma(s)}}ds=\int_a^b\frac{|\dot z(s)|}{|z(s)|}ds.
\end{equation*}
\end{prop}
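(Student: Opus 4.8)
The plan is to read horizontality off the expansion of $\dot\gamma$ in the left-invariant frame $\{\bX,\bY,\bT\}$ computed just above, and then to evaluate the horizontal length using that $\bX$ and $\bY$ are $g^\star_{cc}$-orthonormal. Everything reduces to bookkeeping with the three identities for $\langle\dot\gamma,\bX\rangle_{\gamma(s)}$, $\langle\dot\gamma,\bY\rangle_{\gamma(s)}$, $\langle\dot\gamma,\bT\rangle_{\gamma(s)}$ displayed before the statement.

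First I would note, using Proposition \ref{prop:omegasta}, that $\cH^\star=\ker(\omega^\star)=\langle\bX,\bY\rangle$ and that $\{\bX,\bY,\bT\}$ is a global frame for $T(\fH^\star)$; extending the product \eqref{eq:subRstar} to all of $T(\fH^\star)$ by declaring $\bT$ a unit vector orthogonal to $\bX$ and $\bY$, the functions $\langle\dot\gamma,\bX\rangle_{\gamma(s)}$, $\langle\dot\gamma,\bY\rangle_{\gamma(s)}$, $\langle\dot\gamma,\bT\rangle_{\gamma(s)}$ are exactly the coordinates of $\dot\gamma(s)$ in this frame. Hence $\dot\gamma(s)\in\cH^\star_{\gamma(s)}$ precisely when its $\bT$-coordinate $\langle\dot\gamma,\bT\rangle_{\gamma(s)}$ vanishes, so $\gamma$ is horizontal if and only if $\langle\dot\gamma,\bT\rangle_{\gamma(s)}=0$ for a.e.\ $s$. (For a $\cC^1$ curve this is the same as demanding it for every $s$; I state it with ``a.e.'' because the same argument applies verbatim to absolutely continuous curves, which is the natural class for $d^\star_{cc}$.)

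When $\gamma$ is horizontal, $\dot\gamma=\langle\dot\gamma,\bX\rangle\bX+\langle\dot\gamma,\bY\rangle\bY$, so orthonormality of $\{\bX,\bY\}$ gives $|\dot\gamma(s)|=\sqrt{\langle\dot\gamma,\bX\rangle^2_{\gamma(s)}+\langle\dot\gamma,\bY\rangle^2_{\gamma(s)}}$, which integrates to the first expression for $\ell_{\fH^\star}(\gamma)$. For the second, I would substitute the horizontality relation $\dot t(s)=-2\Im(\overline{z(s)}\dot z(s))$ into the displayed formula for $\langle\dot\gamma,\bY\rangle_{\gamma(s)}$: the correction term cancels, leaving $\langle\dot\gamma,\bY\rangle_{\gamma(s)}=\Im(\dot z(s)/z(s))$, while already $\langle\dot\gamma,\bX\rangle_{\gamma(s)}=\Re(\dot z(s)/z(s))$. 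Consequently $\langle\dot\gamma,\bX\rangle^2_{\gamma(s)}+\langle\dot\gamma,\bY\rangle^2_{\gamma(s)}=|\dot z(s)/z(s)|^2=|\dot z(s)|^2/|z(s)|^2$, and taking square roots and integrating yields $\ell_{\fH^\star}(\gamma)=\int_a^b|\dot z(s)|/|z(s)|\,ds$.

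There is no genuine obstacle: the proof is pure computation and all the ingredients are in place above. The only two points I would be careful to state cleanly are the identification of the frame coordinates of $\dot\gamma$ with the brackets $\langle\dot\gamma,\cdot\rangle$ (immediate from the explicit expansion preceding the statement) and the innocuous passage from $\cC^1$ to a.e., so that the proposition also covers the absolutely continuous curves used in the definition of the $CC$-distance.
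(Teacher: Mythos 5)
Your proof is correct and follows exactly the route the paper intends: the paper states this proposition without proof because it is an immediate consequence of the frame expansion of $\dot\gamma$ in $\{\bX,\bY,\bT\}$ displayed just before it, which is precisely the bookkeeping you carry out (vanishing of the $\bT$-coefficient characterises horizontality, and substituting $\dot t=-2\Im(\overline{z}\dot z)$ gives $\langle\dot\gamma,\bX\rangle^2+\langle\dot\gamma,\bY\rangle^2=|\dot z/z|^2$). Your remarks on identifying the brackets with frame coordinates and on the a.e.\ formulation are sensible but add nothing beyond what the paper takes for granted.
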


\begin{prop}
If $\gamma$ is a horizontal curve in $\fH^\star$, then
\begin{equation}
\ell_{\fH^\star}(\gamma)=\ell_h(\gamma^*), 
\end{equation}
where $\ell_h(\gamma^*)$ is the hyperbolic length of the curve $\gamma^*=\alpha\circ\gamma$. Here, $\alpha$ is the Kor\'anyi map.
\end{prop}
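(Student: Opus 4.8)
The plan is to reduce the claim to the preceding proposition, which says that $\alpha_*$ restricts to a linear isometry from $(\cH^\star, g^\star_{cc})$ onto $(\mathrm{T}(\calL), g_h)$. Since $\gamma$ is horizontal, $\dot\gamma(s)\in\cH^\star_{\gamma(s)}$ for a.e.\ $s$, and by the chain rule $\dot\gamma^*(s)=\alpha_*\big(\dot\gamma(s)\big)$ where $\gamma^*=\alpha\circ\gamma$. Applying the pointwise isometry gives $|\dot\gamma^*(s)|_{g_h}=|\dot\gamma(s)|_{g^\star_{cc}}$ for a.e.\ $s$, and the latter is exactly the integrand $\sqrt{\langle\dot\gamma,\bX\rangle^2_{\gamma(s)}+\langle\dot\gamma,\bY\rangle^2_{\gamma(s)}}$ appearing in the formula for $\ell_{\fH^\star}(\gamma)$ from Proposition \ref{prop-RAlength}. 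Integrating over $[a,b]$ then yields $\ell_{\fH^\star}(\gamma)=\int_a^b|\dot\gamma^*(s)|_{g_h}\,ds=\ell_h(\gamma^*)$.

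As a backup (and as a sanity check on signs), I would also carry out the direct computation. Writing $\gamma(s)=(z(s),t(s))$ and $\gamma^*(s)=\alpha(\gamma(s))=-|z(s)|^2+it(s)$, one has $\dot\gamma^*(s)=-2\Re\big(\overline{z(s)}\dot z(s)\big)+i\dot t(s)$, so that $|\dot\gamma^*(s)|^2=4\Re^2(\overline{z}\dot z)+\dot t^2$. Horizontality of $\gamma$ means $\langle\dot\gamma,\bT\rangle_{\gamma(s)}=0$, i.e.\ $\dot t(s)=-2\Im\big(\overline{z(s)}\dot z(s)\big)$; substituting gives $|\dot\gamma^*(s)|^2=4\Re^2(\overline{z}\dot z)+4\Im^2(\overline{z}\dot z)=4|z|^2|\dot z|^2$, hence $|\dot\gamma^*(s)|=2|z(s)|\,|\dot z(s)|$. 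Since $\Re(\gamma^*(s))=-|z(s)|^2$, the hyperbolic-length integrand $\dfrac{|\dot\gamma^*(s)|}{2|\Re(\gamma^*(s))|}$ equals $\dfrac{2|z(s)|\,|\dot z(s)|}{2|z(s)|^2}=\dfrac{|\dot z(s)|}{|z(s)|}$, which is precisely the integrand in the formula for $\ell_{\fH^\star}(\gamma)$ from Proposition \ref{prop-RAlength}.

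There is no serious obstacle here; the computation is elementary. The only point that needs a moment's care is that the isometry statement for $\alpha_*$ is only asserted on the horizontal subbundle $\cH^\star$, so it is essential to invoke horizontality of $\gamma$ before applying it (equivalently, in the direct computation it is the identity $\dot t=-2\Im(\overline z\dot z)$ that does all the work). I would present the isometry argument as the main proof and append the direct computation as a remark, or simply note that it reproves the previous proposition along $\gamma$.
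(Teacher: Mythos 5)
Your proposal is correct and, in its ``backup'' direct computation, reproduces the paper's own proof essentially verbatim: the paper likewise substitutes the horizontality condition $\dot t=-2\Im(\overline{z}\dot z)$ into $\dot\gamma^*=-2\Re(\overline{z}\dot z)+i\dot t$ to get $|\dot\gamma^*|=2|z||\dot z|$ and hence the integrand $|\dot z|/|z|$. Your primary argument via the statement that $\alpha_*$ is a linear isometry on $\cH^\star$ is just a clean conceptual repackaging of the immediately preceding proposition and is equally valid, with the one caveat you already flag (horizontality must be invoked before applying the isometry).
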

\begin{proof} 
If $\gamma$ is horizontal then $\dot t(s)=-2\Im({\overline z(s)}\dot z(s))$. For the hyperbolic length $\ell_h(\gamma^*)$ of $\gamma^*$ we have:
\begin{eqnarray*}
 \ell_h(\gamma^*)&=&\int_a^b\frac{|\dot\gamma^*(s)|}{-2\Re(\gamma^*(s))}ds\\
 &=&\int_a^b\frac{|-2\Re(\overline{z(s)}\dot z(s))-2i\Im(\overline{z(s)}\dot z(s))|}{2|z(s)|^2}ds\\
 &=&\int_a^b\frac{|\overline{z(s)}\dot z(s)|}{|z(s)|^2}ds\\
 &=&\ell_{\fH^\star}(\gamma).
\end{eqnarray*}
\end{proof}

The following proposition shows that the horizontal distribution $\cH^\star=\langle \bX,\bY\rangle$ is Ehresmann complete.
\begin{prop}\label{prop-liftcurves}
Suppose that $\gamma^*:[a,b]\to\calL$ is a smooth curve starting from $p^*\in\calL=\gamma^*(a)$. Then for every $p\in\alpha^{-1}(p^*)$ there exists a horizontal $\gamma:[a,b]\to\fH^\star$ starting from $p$ and such that $\alpha\circ\gamma=\gamma^*$ and $\ell_\fH^\star(\gamma)=\ell_h(\gamma^*)$. Here, $\alpha$ is the Kor\'anyi map.
\end{prop}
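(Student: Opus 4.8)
The plan is to reduce the lifting problem to an explicit quadrature. The structural facts I would use, all established above, are that $\alpha$ has one-dimensional fibres — the Reeb circles, i.e.\ the integral curves of $\bT$ — and that at every point $q\in\fH^\star$ the differential $\alpha_*$ restricts to a linear isomorphism $\cH^\star_q\to T_{\alpha(q)}\calL$. Write $\gamma^*(s)=\xi(s)+i\eta(s)$, so $\xi(s)<0$ for all $s$, and fix $p=(z_0,t_0)\in\alpha^{-1}(p^*)$; by the definition of the Kor\'anyi map this means precisely $|z_0|^2=-\xi(a)$ and $t_0=\eta(a)$. Any lift $\gamma(s)=(z(s),t(s))$ with $\alpha\circ\gamma=\gamma^*$ is then forced to satisfy $t(s)=\eta(s)$ and $|z(s)|^2=-\xi(s)$, so that, writing $z(s)=\sqrt{-\xi(s)}\,e^{i\theta(s)}$, the only remaining freedom is the phase $\theta$.

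Next I would turn horizontality into an ODE for $\theta$. For $z$ of the above form one computes $\Im(\overline{z}\dot z)=-\xi\,\dot\theta$, and since $\dot t=\dot\eta$, the horizontality criterion of Proposition \ref{prop-RAlength}, namely $\dot t+2\Im(\overline{z}\dot z)=0$, becomes the scalar linear equation
$$
\dot\theta(s)=\frac{\dot\eta(s)}{2\xi(s)}.
$$
Because $\gamma^*$ is smooth and $\xi<0$ throughout the compact interval $[a,b]$, the continuous function $\xi$ attains a strictly negative maximum there, so the right-hand side is continuous and bounded. I would therefore set
$$
\theta(s)=\arg(z_0)+\int_a^s\frac{\dot\eta(u)}{2\xi(u)}\,du,\qquad \gamma(s)=\Bigl(\sqrt{-\xi(s)}\;e^{i\theta(s)},\,\eta(s)\Bigr).
$$
By construction $\gamma$ is smooth, $\gamma(a)=p$, $\alpha\circ\gamma=\gamma^*$, and $\gamma$ is horizontal; the length identity $\ell_{\fH^\star}(\gamma)=\ell_h(\gamma^*)$ is then immediate from the preceding proposition.

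I do not expect a serious obstacle: the content of the statement is precisely that the vertical bundle $\ker\alpha_*$ is one-dimensional, so the equation for the phase is a scalar linear ODE solvable by quadrature on all of $[a,b]$ rather than only locally. The one place that genuinely uses compactness of $[a,b]$ — equivalently of $\gamma^*([a,b])\subset\calL$ — is the convergence of the integral defining $\theta$, and this is exactly what prevents the lift from escaping toward the boundary $\Re(\zeta)=0$ of $\calL$. A coordinate-free variant, if one prefers it, is to let $s\mapsto(\alpha_*|_{\cH^\star_q})^{-1}\bigl(\dot\gamma^*(s)\bigr)$ define a smooth time-dependent vector field along the fibres of $\alpha$, solve the associated ODE with initial value $p$, observe that $\alpha\circ\gamma=\gamma^*$ follows by uniqueness, and conclude global existence on $[a,b]$ because the solution is then confined to the compact circle bundle $\alpha^{-1}(\gamma^*([a,b]))$; horizontality and the length equality follow as before.
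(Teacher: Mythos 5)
Your proposal is correct and follows essentially the same route as the paper's proof: write the lift in polar form with $|z(s)|^2=-\Re(\gamma^*(s))$ and $t(s)=\Im(\gamma^*(s))$ forced by $\alpha\circ\gamma=\gamma^*$, reduce horizontality to the scalar equation $\dot\theta=\Im(\dot\gamma^*)/(2\Re(\gamma^*))$, and solve by quadrature from the prescribed initial phase. Your added remarks on boundedness of the integrand and the coordinate-free reformulation are fine but not needed; the argument matches the paper's.
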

\begin{proof}
Set $\gamma^*(s)=\zeta(s)$, $\gamma^*(a)=\zeta_0$. Then
$$
\alpha^{-1}(\zeta_0)=\left\{((\Re^{1/2}(-\zeta_0)e^{i\theta},\Im(\zeta_0))\;|\;\theta\in\R\right\}.
$$
Pick a $\theta_0\in\R$ and define $\gamma:[a,b]\to\fH^\star$ from the relation
$$
\gamma(s)=(z(s),t(s))=\left(\Re^{1/2}(-\zeta(s))e^{i\theta(s)},\Im(\zeta(s))\right),
$$
where
$$
\theta(s)-\theta_0=\int_a^s\frac{\Im(\dot\zeta(u))}{2\Re(\zeta(u))}du.
$$
It is clear that $\gamma$ starts from $((\Re^{1/2}(-\zeta_0)e^{i\theta_0},\Im(\zeta_0))$ and we show that $\gamma$ is horizontal:
\begin{eqnarray*}
 \frac{\dot t(s)}{2|z(s)|^2}&=&\frac{\Im(\dot\zeta(s))}{-2\Re(\zeta(s))}\\
 &=&-\dot\theta(s)=-\dot{\arg(z(s))}.
\end{eqnarray*}
If $\gamma^*$ is closed, $\gamma^*(a)=\gamma^*(b)=\zeta_0\in\calL$, then by setting $\zeta=\xi+i\eta$ we have
$$
\theta(b)-\theta(a)=\theta(b)-\theta_0=\int_a^b\frac{\Im(\dot\zeta(s))}{2\Re(\zeta(s))}ds=\int_{\gamma^*}\frac{d\eta}{2\xi}.
$$
Applying Stokes' Theorem we get
$$
\theta(b)=-\iint_{{\rm int}(\gamma^*)}\frac{d\xi\wedge d\eta}{2\xi^2}=-2{\rm Area}_h({\rm int}(\gamma^*)).
$$
Thus
$$
\gamma(a)=(\Re^{1/2}(-\zeta_0)e^{i\theta_0},\Im(\zeta_0)),\quad \gamma(b)=(\Re^{1/2}(-\zeta_0)e^{i(\theta_0-2{\rm Area}_h({\rm int}(\gamma^*))},\Im(\zeta_0)).
$$
The last statement of the proposition is obvious.
\end{proof}
Since $\fH^\star$ is contact, any two points $p$ and $q$ in $\fH^\star$ can be joined with a horizontal geodesic.
\begin{defn}
The {\it Carnot-Carath\'eodory distance} of two arbitrary points $p,q\in\fH^\star$ is defined by
$$
d^\star_{cc}(p,q)=\inf_\gamma\ell_{\fH^\star}(\gamma),
$$ 
where $\gamma$ is horizontal and joins $p$ and $q$.
\end{defn}
According to Corollary \ref{cor:ccisom}, ${\rm Isom}(d^\star_{cc})={\rm SU}(1,1)\times{\rm U}(1)$.

\section{Quasiconformal diffeomorphisms of $\fH^\star$ }\label{sec:qcstar}
\subsection{Smooth contact diffeomorphisms of $\fH^\star$ } Let $F=(f_I,f_3)$ be a smooth (at least of class $\cC^2$)  orientation-preserving diffeomorphism of $\fH^\star$ which is a contact transformation, that is,  $F^*\omega^\star=\lambda^\star\omega^\star$, for some positive function $\lambda^\star$. In other words,
\begin{equation}\label{eq:differentials}
d\arg(f_I)+\frac{df_3}{2|f_I|^2}=\lambda^\star\omega^\star.
\end{equation}
The following is clear.
\begin{prop}
If $F$ is a contact transformation of $\fH^\star$, then $F$ is also a contact transformation with respect to the contact form $\omega$ of $\fH$.
\end{prop}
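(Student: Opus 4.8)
The plan is to exploit the explicit relationship $\omega^\star = \omega/(2|z|^2)$ established in Proposition~\ref{prop:omegasta}, together with the fact that $F$ preserves the punctured domain $\C_*\times\R$. First I would observe that since $F=(f_I,f_3)$ is an orientation-preserving diffeomorphism of $\fH^\star = \C_*\times\R$, the first component $f_I$ never vanishes, so $|f_I|^2$ is a smooth positive function on $\C_*\times\R$; in particular $F^*\bigl(2|z|^2\bigr) = 2|f_I|^2$ is a nowhere-vanishing smooth function. Now compute $F^*\omega$ directly from $F^*\omega^\star = \lambda^\star\omega^\star$:
\begin{equation*}
F^*\omega = F^*\bigl(2|z|^2\,\omega^\star\bigr) = \bigl(F^*(2|z|^2)\bigr)\cdot F^*(\omega^\star) = 2|f_I|^2\cdot\lambda^\star\cdot\frac{\omega}{2|z|^2} = \frac{|f_I|^2\,\lambda^\star}{|z|^2}\,\omega.
\end{equation*}

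Thus $F^*\omega = \lambda\,\omega$ with $\lambda := \lambda^\star\,|f_I|^2/|z|^2$, which is a smooth real-valued function on $\C_*\times\R$; since $\lambda^\star$ is positive and $|f_I|^2/|z|^2$ is positive, $\lambda$ is positive, hence nowhere vanishing. This is exactly the contact condition \eqref{eq:contact_form} for $F$ viewed as a map into (a domain of) $\fH$. The only remaining point to address is that one should regard $F$ as a contact transformation between domains of $\fH$ rather than of all of $\fH$, which is consistent with the earlier remark in the introduction that $(\fH^\star,\omega^\star)$ is a contact open submanifold of $(\fH,\omega)$, so every contact transformation of the former is a contact transformation of a domain of the latter.

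There is essentially no obstacle here: the proof is a one-line pullback computation once one records that $\omega^\star$ is a nowhere-vanishing smooth multiple of $\omega|_{\C_*\times\R}$ and that $F$ preserves $\C_*\times\R$ (so the multiplier $2|z|^2$ pulls back to the nowhere-vanishing $2|f_I|^2$). The mildest care needed is simply noting positivity/non-vanishing of the new conformal factor $\lambda$, which follows immediately from $f_I$ being non-vanishing on the punctured plane. I would state the proof in two or three lines along exactly the display above, perhaps also remarking for later use that the Beltrami-type data $\overline{\bZ}f_I/\bZ f_I$ for $\fH^\star$ and $\overline{Z}f_I/Z f_I$ for $\fH$ agree because $\bZ = zZ$ and $\overline{\bZ}=\overline{z}\,\overline{Z}$, so the factors of $z$ cancel in the ratio — though this observation belongs more properly to the subsequent discussion of quasiconformality than to this proposition.
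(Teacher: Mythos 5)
Your computation is exactly the intended argument: the paper declares the proposition "clear" without writing a proof, and the one-line pullback $F^*\omega = F^*(2|z|^2\,\omega^\star) = 2|f_I|^2\lambda^\star\,\omega/(2|z|^2)$, together with the observation that $f_I$ is nowhere vanishing on $\C_*\times\R$ so the new factor $\lambda=\lambda^\star|f_I|^2/|z|^2$ is positive, is precisely what makes it clear. Your proposal is correct and matches the paper's (implicit) approach.
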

We shall see below the equivalent equations to (\ref{eq:differentials}). First, we set $f_{II}=-|f_I|^2+if_3$. 
The Jacobian matrix of the differential $F_*$ may be expressed as follows:
\begin{eqnarray*}\label{eq:Jac}
\notag DF&=&\left[\begin{matrix}
\left\langle \phi^\star+i\psi^\star, F_*\bZ\right\rangle&\left\langle  \phi^\star+i\psi^\star, F_*\overline{\bZ}\right\rangle&\left\langle  \phi^\star+i\psi^\star, F_*\bT\right\rangle\\
\\
\left\langle \phi^\star-i\psi^\star, F_*\bZ\right\rangle&\left\langle \phi^\star-i\psi^\star, F_*\overline{\bZ}\right\rangle&\left\langle \phi^\star-i\psi^\star, F_*\bT\right\rangle\\
\\
\left\langle \omega^\star,F_*\bZ\right\rangle&\left\langle \omega^\star,F_*\overline{\bZ}\right\rangle&\left\langle \omega^\star,F_*\bT\right\rangle
      \end{matrix}\right]\\
      &=&
      \left[\begin{matrix}
        -\frac{\bZ f_{II}}{2|f_I|^2}&-\frac{\overline{\bZ} f_{II}}{2|f_I|^2}&-\frac{\bT f_{II}}{2|f_I|^2}\\
        \\
        -\frac{\bZ \overline{f_{II}}}{2|f_I|^2}&-\frac{\overline{\bZ} \overline{f_{II}}}{2|f_I|^2}&-\frac{\bT \overline{f_{II}}}{2|f_I|^2}\\
        \\
        0&0&\lambda^\star
       \end{matrix}\right].
\end{eqnarray*}
We prove the above equality. In the first place 
\begin{equation*}
 \left\langle \phi^\star+i\psi^\star,\;F_*\bZ\right\rangle=\left(F^*\left(\frac{d|z|^2-idt}{2|z|^2}\right)\right)(\bZ)
 =-\frac{df_{II}}{2|f_I|^2}(\bZ)=-\frac{\bZ f_{II}}{2|f_I|^2}.
\end{equation*}
and analogously for the other coefficients of the first two rows. As for the third row,
\begin{eqnarray*}
&&
\left\langle \omega^\star,F_*\bZ\right\rangle=(F^*\omega^\star)(\bZ)=\lambda^\star\omega^\star(\bZ)=0,\\
&&
\left\langle \omega^\star,F_*\overline{\bZ}\right\rangle=(F^*\omega^\star)(\overline{\bZ})=\lambda^\star\omega^\star(\overline{\bZ})=0,\\
&&
\left\langle \omega^\star,F_*\bT\right\rangle=(F^*\omega^\star)(\bT)=\lambda^\star\omega^\star(\bT)=\lambda^\star,
\end{eqnarray*}
since $F$ is contact. These equalities induce the {\it contact conditions for $\cC^2$ diffeomorphisms of $\fH^\star$}:
\begin{eqnarray}
 &&\notag
 \frac{\bZ f_3}{2|f_I|^2}+\bZ\arg(f_I)=0,\\
 &&\label{eq:contact}
 \frac{\overline{\bZ}f_3}{2|f_I|^2}+\overline{\bZ}\arg(f_I)=0,\\
 &&\notag
 \frac{\bT f_3}{2|f_I|^2}+\bT\arg(f_I)=\lambda^\star.
\end{eqnarray}
From the contact conditions we immediately obtain that the Jacobian matrix $D_F$ of the differential $F_*$ may also be written as
\begin{equation}\label{eq:Jac2}
 D_F=\left[\begin{matrix}
       \bZ{\rm Log}(f_I)&\overline{\bZ}{\rm Log}(f_I)&\bT{\rm Log}(f_I)-i\lambda^\star\\
        \\
        \bZ{\rm Log}(\overline{f_I})&\overline{\bZ}{\rm Log}(\overline{f_I})&\bT{\rm Log}(\overline{f_I})+i\lambda^\star\\
        \\
        0&0&\lambda^\star
       \end{matrix}\right].
\end{equation}

\begin{prop}
The Jacobian determinant $J_F$ of $DF$ satisfies $J_F=(\lambda^\star)^2$. Moreover,
\begin{eqnarray*}
 \lambda^\star&=&|\bZ{\rm Log}(f_I)|^2-|\overline{\bZ}{\rm Log}(f_I)|^2\\
 &=&2\Im\left(\bZ\log(|f_I|)\cdot\overline{\bZ}\arg(f_I)\right)\\
 \label{eq:lafter}&=&-\frac{\Im(\bZ\log(|f_I|)\cdot\overline{\bZ}f_3)}{|f_I|^2}.
\end{eqnarray*}
\end{prop}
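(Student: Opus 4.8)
The plan is to reduce the entire statement to the $2\times2$ upper-left block $D_0$ of the Jacobian matrix $DF$ in (\ref{eq:Jac2}), together with the single analytic input $J_F=(\lambda^\star)^2$. Reading off the bottom row $(0,0,\lambda^\star)$ of (\ref{eq:Jac2}) and expanding the determinant along it gives $J_F=\det(DF)=\lambda^\star\det D_0$. To obtain $\det(DF)=(\lambda^\star)^2$ I would first observe that $DF$ is the matrix of $F_*$ with respect to the left-invariant complex frame $\{\bZ,\overline{\bZ},\bT\}$ and its dual coframe; the change of basis to the real frame $\{\bX,\bY,\bT\}$ is a constant one, applied identically on the source and on the target, so it leaves $\det(DF)$ unchanged, whence $F^*(dm)=\det(DF)\,dm$ with $dm=-\omega^\star\wedge d\omega^\star$ by Proposition \ref{prop:omegasta}(ii). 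Then I would pull this volume form back directly from the contact identity $F^*\omega^\star=\lambda^\star\omega^\star$: differentiating gives $F^*(d\omega^\star)=d\lambda^\star\wedge\omega^\star+\lambda^\star\,d\omega^\star$, so that
$$
F^*(\omega^\star\wedge d\omega^\star)=(\lambda^\star\omega^\star)\wedge\bigl(d\lambda^\star\wedge\omega^\star+\lambda^\star\,d\omega^\star\bigr)=(\lambda^\star)^2\,\omega^\star\wedge d\omega^\star,
$$
the cross term vanishing since it contains $\omega^\star\wedge\omega^\star=0$. This yields $\det(DF)=(\lambda^\star)^2$, and since $\lambda^\star\neq0$ (as $F$ is an orientation-preserving diffeomorphism) also $\det D_0=\lambda^\star$.

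It then remains to rewrite $\det D_0$ in the three stated ways. Because ${\rm Log}(\overline{f_I})=\overline{{\rm Log}(f_I)}$, the block $D_0$ has rows $(A,B)$ and $(\overline{B},\overline{A})$ with $A=\bZ\,{\rm Log}(f_I)$ and $B=\overline{\bZ}\,{\rm Log}(f_I)$; hence $\lambda^\star=\det D_0=A\overline{A}-B\overline{B}=|\bZ\,{\rm Log}(f_I)|^2-|\overline{\bZ}\,{\rm Log}(f_I)|^2$, the first expression. For the second I would split ${\rm Log}(f_I)=\log(|f_I|)+i\arg(f_I)$ into its real and imaginary parts, so that $A=\bZ\log(|f_I|)+i\,\bZ\arg(f_I)$ and $B=\overline{\bZ}\log(|f_I|)+i\,\overline{\bZ}\arg(f_I)$; expanding $|A|^2-|B|^2$, using $\overline{\bZ\,g}=\overline{\bZ}\,\overline{g}$ and the reality of $\log(|f_I|)$ and $\arg(f_I)$, the ``diagonal'' contributions $\bigl|\bZ\log(|f_I|)\bigr|^2$, $\bigl|\bZ\arg(f_I)\bigr|^2$ and their $\overline{\bZ}$-counterparts cancel in pairs, and what survives is, after collecting the constant, the term $2\,\Im\bigl(\bZ\log(|f_I|)\cdot\overline{\bZ}\arg(f_I)\bigr)$. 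Finally, the second of the contact equations (\ref{eq:contact}) gives $\overline{\bZ}\arg(f_I)=-\overline{\bZ}f_3/\bigl(2|f_I|^2\bigr)$, and substituting this into the previous line produces the third expression.

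The step I expect to be the real obstacle is the identity $\det(DF)=(\lambda^\star)^2$ --- more precisely, the bookkeeping that identifies $\det(DF)$ (written in a complex frame) with the Haar-measure Jacobian $F^*(dm)/dm$, so that the numerical constant relating $\phi^\star\wedge\psi^\star\wedge\omega^\star$ to $(\phi^\star+i\psi^\star)\wedge(\phi^\star-i\psi^\star)\wedge\omega^\star$ plays no role, and the clean verification that the mixed term in the pullback of $\omega^\star\wedge d\omega^\star$ genuinely drops out. Once that is settled, the second paragraph is routine algebra with complex conjugates, as long as one tracks the reality of $\log(|f_I|)$ and $\arg(f_I)$.
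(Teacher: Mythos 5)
Your argument is correct in outline and is essentially the paper's run in the opposite order: the paper computes the $2\times 2$ block directly, $\det D_0=F^*(-i\,d\omega^\star)(\bZ,\overline{\bZ})=-i\bigl(d\lambda^\star\wedge\omega^\star+\lambda^\star d\omega^\star\bigr)(\bZ,\overline{\bZ})=\lambda^\star$, the cross term dying because $\omega^\star$ annihilates $\bZ$ and $\overline{\bZ}$, and then $J_F=\lambda^\star\det D_0=(\lambda^\star)^2$ follows from the block-triangular form of $DF$; you instead get $J_F=(\lambda^\star)^2$ first from $F^*(\omega^\star\wedge d\omega^\star)=(\lambda^\star)^2\,\omega^\star\wedge d\omega^\star$ and divide by the bottom-row entry. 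Both routes rest on the same pullback identity and the same vanishing of the mixed term; your frame bookkeeping (a constant change of basis applied identically at source and target) is sound, and your version has the merit of making the claim $J_F=(\lambda^\star)^2$, which the paper leaves implicit, fully explicit.

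One concrete warning about the ``routine algebra'' in your second paragraph. Writing $A=a+ib$ and $B=\overline a+i\,\overline b$ with $a=\bZ\log|f_I|$ and $b=\bZ\arg(f_I)$ (so that $\overline{\bZ}$ of a real function is the conjugate of $\bZ$ of it), the expansion gives
\begin{equation*}
|A|^2-|B|^2=2i\,(b\overline a-a\overline b)=4\,\Im\bigl(\bZ\log|f_I|\cdot\overline{\bZ}\arg(f_I)\bigr),
\end{equation*}
so the constant is $4$, not the $2$ printed in the statement, and correspondingly the last displayed line should be $-2\,\Im\bigl(\bZ\log|f_I|\cdot\overline{\bZ}f_3\bigr)/|f_I|^2$. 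A sanity check with the identity map confirms this: there $\bZ\log|z|=1/2$ and $\overline{\bZ}\arg z=i/2$, so $2\,\Im(\cdots)=1/2\neq 1=\lambda^\star$ while $4\,\Im(\cdots)=1$. The discrepancy is a factor-of-two slip in the proposition as printed (the paper dismisses these two lines as ``straightforward calculations''), not a flaw in your method, but as written your proposal asserts that ``collecting the constant'' lands on the displayed formula, which it will not; you should carry out the expansion and record the corrected constants rather than claim agreement with the statement.
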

\begin{proof}
 We prove the first equality; the second equality is induced after straightforward calculations and the third equality follows from the contact conditions. We have:
 \begin{eqnarray*}
|\bZ{\rm Log}(f_I)|^2-|\overline{\bZ}{\rm Log}(f_I)|^2&=& \det\left(\begin{matrix}
\left\langle \phi^\star+i\psi^\star,F_*\bZ\right\rangle&\left\langle \phi^\star+i\psi^\star,F_*\overline{\bZ}\right\rangle\\
\\
\left\langle \phi^\star-i\psi^\star,F_*\bZ\right\rangle&\left\langle \phi^\star-i\psi^\star,F_*\overline{\bZ}\right\rangle\\
      \end{matrix}\right)\\
      &=&\left\langle \phi^\star+i\psi^\star,F_*\bZ\right\rangle\cdot\left\langle \phi^\star-i\psi^\star,F_*\overline{\bZ}\right\rangle\\
      &&-\left\langle \phi^\star+i\psi^\star,F_*\overline{\bZ}\right\rangle\cdot\left\langle \phi^\star-i\psi^\star,F_*\bZ\right\rangle\\
      &=&-2i\;\phi^\star\wedge\psi^\star\left(F_*\bZ,F_*\overline{\bZ}\right)\\
      &=&F^*\left(-2i\;\phi^\star\wedge\psi^\star\right)(\bZ,\overline{\bZ})\\
      &=&F^*(-id\omega^\star)(\bZ,\overline{\bZ})\\
      &=&-i(d\lambda^\star\wedge\omega^\star+\lambda^\star d\omega^\star)(\bZ,\overline{\bZ})\\
      &=&-i\lambda^\star d\omega^\star(\bZ,\overline{\bZ})=\lambda^\star.
      \end{eqnarray*}
\end{proof}

Denote by $|\cdot|$ the sub-Riemannian norm on $\fH^\star$. Let $F=(f_I,f_3)$ a smooth, orientation-preserving contact transformation, $F^*\omega^\star=\lambda^\star\omega^\star$, and let also $p\in\fH^\star$. Horizontal vectors $V_p\in\cH^\star_p$ are of the form $a\bX_p+b\bY_p$ and $|V_p|=\sqrt{a^2+b^2}$. Since $F$ is contact, for every such $V_p$ we always have $F_{*,p}V_p\in\cH_{F(p)}$. We consider
\begin{equation*}
\lambda_1(p)=\sup_{|V_p|=1}|F_{*,p}V_p|,\quad \lambda_2(p)=\inf_{|V_p|=1}|F_{*,p}V_p|.
\end{equation*}
We have
$$
\lambda_1(p)=|\bZ{\rm Log}(f_I)(p)|+|\overline{\bZ}{\rm Log}(f_I)(p)|
$$
and
$$
\lambda_2(p)=|\bZ{\rm Log}(f_I)(p)|-|\overline{\bZ}{\rm Log}(f_I)(p)|.
$$
Let
$$
D_0(p)=\left[\begin{matrix}
       \bZ{\rm Log}(f_I)&\overline{\bZ}{\rm Log}(f_I)\\
        \\
        \bZ{\rm Log}(\overline{f_I})&\overline{\bZ}{\rm Log}(\overline{f_I})
       \end{matrix}\right]_p,
$$
and consider the matrix $D_0(p)^*D_0(p)$, where $D_0(p)^*$ is the complex transpose of $D_0(p)$. Note that $\lambda_1^2(p),\lambda_2^2(p)$ are the eigenvalues of $D_0(p)^*D_0(p)$ and we have also proved that $J_0(p)=\det(J_0(p))$ $=\lambda^\star(p)$. The following is now clear:

\begin{prop}
For an orientation-preserving, smooth contact transformation $F$ of $\fH^\star$ we have
$$
\lambda_1\lambda_2=\lambda^\star.
$$
\end{prop}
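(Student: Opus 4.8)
The plan is to read the identity off directly from the formulas already established for $\lambda_1$ and $\lambda_2$. First I would write, pointwise on $\fH^\star$,
$$
\lambda_1\lambda_2=\bigl(|\bZ{\rm Log}(f_I)|+|\overline{\bZ}{\rm Log}(f_I)|\bigr)\bigl(|\bZ{\rm Log}(f_I)|-|\overline{\bZ}{\rm Log}(f_I)|\bigr)=|\bZ{\rm Log}(f_I)|^2-|\overline{\bZ}{\rm Log}(f_I)|^2,
$$
a plain difference-of-squares manipulation, and then invoke the preceding proposition, which records exactly $|\bZ{\rm Log}(f_I)|^2-|\overline{\bZ}{\rm Log}(f_I)|^2=\lambda^\star$. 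Combining the two lines closes the argument.

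The alternative route, which I would actually prefer to write up since it makes the eigenvalue observation just before the statement carry some weight, is via determinants: $\lambda_1^2$ and $\lambda_2^2$ are the eigenvalues of the Hermitian positive semidefinite matrix $D_0^*D_0$, so
$$
\lambda_1^2\lambda_2^2=\det\!\bigl(D_0^*D_0\bigr)=|\det D_0|^2=(\lambda^\star)^2,
$$
using $\det D_0=\lambda^\star$ as recorded in the paragraph preceding the statement. Taking positive square roots gives $\lambda_1\lambda_2=\lambda^\star$.

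The only point deserving a word of care is the sign of $\lambda_2$: a priori $\lambda_2=|\bZ{\rm Log}(f_I)|-|\overline{\bZ}{\rm Log}(f_I)|$ need not be nonnegative, but since $F$ is orientation-preserving we have $\lambda^\star>0$, hence $|\bZ{\rm Log}(f_I)|^2>|\overline{\bZ}{\rm Log}(f_I)|^2$ everywhere and therefore $\lambda_2>0$, so extracting square roots is unambiguous. I do not expect a genuine obstacle here; the proposition is a bookkeeping corollary of the Jacobian computation carried out in the preceding proposition.
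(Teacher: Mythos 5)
Your first route is exactly the paper's argument: the paper declares the proposition ``clear'' precisely because $\lambda_1\lambda_2$ is the difference of squares $|\bZ{\rm Log}(f_I)|^2-|\overline{\bZ}{\rm Log}(f_I)|^2$, which the preceding proposition identifies with $\lambda^\star$. Your determinant variant and the remark on the positivity of $\lambda_2$ are consistent with the eigenvalue observation the paper itself sets up, so the proposal is correct and takes essentially the same approach.
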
 

\begin{defn}\label{defn:mdf}
Let $F=(f_I,f_3)$ a smooth, orientation-preserving contact transformation of $\fH^\star$. The {\it maximal distortion function} $K_F$ is defined for each $p\in\fH^\star$ by
$$
K_F(p)=\frac{\lambda_1(p)}{\lambda_2(p)}=\frac{|\bZ{\rm Log}(f_I)(p)|+|\overline{\bZ}{\rm Log}(f_I)(p)|}{|\bZ{\rm Log}(f_I)(p)|-|\overline{\bZ}{\rm Log}(f_I)(p)|}=\frac{|\bZ f_I(p)|+|\overline{\bZ} f_I (p)|}{|\bZ f_I (p)|-|\overline{\bZ} f_I (p)|}.
$$
\end{defn}

\subsection{Quasiconformal contact diffeomorphisms of $\fH^\star$ }
\begin{defn}
Suppose that $F=(f_I,f_3)$ is a smooth, orientation-preserving contact transformation of $\fH^\star$. If its maximal distortion function satisfies
$$
K_F(p)\le K, \quad p\in\fH^\star,
$$
for some $K$, then $F$ shall be called $K$-quasiconformal.
\end{defn}
The {Beltrami coefficient} $\mu_F:\fH^\star\to\C$ of the $K$-quasiconformal mapping $F$ is given by
$$
\mu_F(p)=\frac{\overline{\bZ}f_I}{\bZ f_I}(p),\quad p\in\fH^\star.
$$
$K$-quasiconformality of $F$ is equivalent to
$$
\|\mu_F\|={\rm esssup}_{p\in\fH^\star}\{|\mu_F(p)|\}\le k=\frac{K-1}{K+1}\in[0,1).
$$
Note that due to contact conditions we also have
\begin{eqnarray*}
&&
\frac{\overline{\bZ}f_I}{\bZ f_I}=\frac{\overline{\bZ}{\rm Log}f_I}{\bZ{\rm Log}f_I}=\frac{\overline{\bZ}f_{II}}{\bZ f_{II}}=\mu_F.
\end{eqnarray*}
The following holds.
\begin{prop}
A contact $K$-quasiconformal transformation of $\fH^\star$ is also a contact $K$-\\quasiconformal transformation of an open and connected subset of the Heisenberg group $\fH$.
\end{prop}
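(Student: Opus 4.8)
The plan is to take
$$
U=\C_*\times\R=\fH\setminus\V,
$$
which is open and connected in $\fH$ because $\C\setminus\{0\}$ is path-connected, and to verify that $F=(f_I,f_3)$, regarded as a self-map of $U$, satisfies the two defining conditions of a smooth $K$-quasiconformal contact transformation of the strictly pseudoconvex CR manifold $U\subset\fH$. First I would observe that, since $F\colon\fH^\star\to\fH^\star$ is a $\cC^2$ diffeomorphism and $F(\fH^\star)=\fH^\star=U$, its restriction is a $\cC^2$ diffeomorphism of $U$. For the orientation I would compare the volume forms: on $U$ one has $\omega\wedge d\omega=-4\,dx\wedge dy\wedge dt$ and $\omega^\star\wedge d\omega^\star=-dm=-|z|^{-4}\,dx\wedge dy\wedge dt$, which are positive multiples of each other, so the orientation of $U$ inherited from $\fH$ agrees with that of $\fH^\star$ and $F|_U$ is orientation-preserving in $\fH$.

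Next I would check the contact condition. From $\omega^\star=\omega/(2|z|^2)$ and $F^*\omega^\star=\lambda^\star\omega^\star$ with $\lambda^\star>0$, and using $z\circ F=f_I$, one gets
$$
F^*\omega=2|f_I|^2\,F^*\omega^\star=2|f_I|^2\lambda^\star\,\omega^\star=\frac{|f_I|^2}{|z|^2}\,\lambda^\star\,\omega=:\lambda\,\omega ,
$$
with $\lambda>0$ on $U$. Hence $F|_U$ is a contact transformation of the domain $U\subset\fH$; this is precisely the content of the proposition already recorded above.

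The key step is the comparison of the distortion. Here I would use the identities $\bZ=zZ$ and $\overline{\bZ}=\overline{z}\,\overline{Z}$, so that at each $p=(z,t)\in U$
$$
|\bZ f_I(p)|=|z|\,|Zf_I(p)|,\qquad |\overline{\bZ}f_I(p)|=|z|\,|\overline{Z}f_I(p)| ,
$$
with $|z|\neq 0$. Since $\lambda^\star=\lambda_1\lambda_2>0$ and $\lambda_1,\lambda_2\ge 0$, we have $\lambda_2=|\bZ{\rm Log}(f_I)|-|\overline{\bZ}{\rm Log}(f_I)|>0$, whence $|\bZ f_I|>|\overline{\bZ}f_I|$ and therefore $|Zf_I|>|\overline{Z}f_I|$ at every point of $U$; so the $\fH$-maximal distortion of $F$ is defined on $U$, and by Definition \ref{defn:mdf} the common factor $|z|$ cancels:
$$
\frac{|Zf_I(p)|+|\overline{Z}f_I(p)|}{|Zf_I(p)|-|\overline{Z}f_I(p)|}=\frac{|\bZ f_I(p)|+|\overline{\bZ}f_I(p)|}{|\bZ f_I(p)|-|\overline{\bZ}f_I(p)|}=K_F(p)\le K .
$$
Equivalently, the $\fH^\star$-Beltrami coefficient $\overline{\bZ}f_I/\bZ f_I$ and the $\fH$-Beltrami coefficient $\overline{Z}f_I/Zf_I$ differ only by the unimodular factor $\overline{z}/z$, so $|\mu_F|$ and the constant $K$ are unchanged. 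Combining this with the contact property shows that $F|_U$ is a $\cC^2$, orientation-preserving contact transformation of the open connected set $U\subset\fH$ with maximal distortion $\le K$ everywhere, i.e.\ a contact $K$-quasiconformal transformation of $U$.

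The main obstacle is not a mathematical difficulty but a matter of bookkeeping: one must be careful to match the orientation conventions on $\fH$ and $\fH^\star$, and to remember that the two defining conditions of smooth $K$-quasiconformality phrased via the Levi-form inequality are equivalent to the bound $\lambda_1/\lambda_2\le K$ together with the contact condition — both equivalences having already been established in the paper for $\fH$ and for $\fH^\star$. Once these are invoked, the proof reduces to the elementary scaling identity $\bZ=zZ$, $\overline{\bZ}=\overline{z}\,\overline{Z}$ used above.
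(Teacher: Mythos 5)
Your argument is correct and follows essentially the same route as the paper: the paper's (much terser) proof likewise rests on the identity $\bZ=zZ$ to identify the two maximal distortion functions, with the contact condition for $\omega$ on the domain $\C_*\times\R$ already recorded in the preceding proposition. Your additional bookkeeping (openness and connectedness of $\C_*\times\R$, orientation, positivity of $\lambda=\lambda^\star|f_I|^2/|z|^2$, and the cancellation of $|z|$ in the distortion quotient) simply makes explicit what the paper leaves implicit.
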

\begin{proof}
 Since $\bZ=zZ$, we have  from Definition \ref{defn:mdf} that the maximal distortion function of $F$ as a contact transformation of $\fH^\star$ coincides with the maximal distortion function of $F$ as a contact transformation of a domain of $\fH$. Note also that if $\mu_F$ (resp. $\tilde{\mu}_F$) is the Beltrami coefficient of $F$ as a contact quasiconformal mapping of $\fH^\star$ (resp. $\fH$), then
 $$
 \mu_F(z,t)=\frac{z}{\overline{z}}\cdot\tilde{\mu}_F(z,t),\quad (z,t)\in\fH^\star.
 $$ 
 \end{proof}
\begin{prop}
The group ${\rm SU}(1,1)\times {\rm U}(1)$  is the group of 1-quasiconformal (conformal) mappings of $\fH^\star$.
\end{prop}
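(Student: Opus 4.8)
The plan is to prove the two inclusions between ${\rm SU}(1,1)\times{\rm U}(1)$ and the group of smooth, orientation-preserving $1$-quasiconformal contact transformations of $\fH^\star$ separately, using throughout the bridge with the Heisenberg group $\fH$. For the inclusion $\supseteq$ I take $F$ of the form \eqref{eq:fSU}. Such an $F$ is a smooth diffeomorphism of $\C_*\times\R$ since these maps form a group acting on $\fH^\star$, and it is orientation-preserving because ${\rm SU}(1,1)\times{\rm U}(1)$ is connected and contains the identity. It is a contact transformation: it is the transport through the CR diffeomorphism $\Psi$ of the CR automorphism of $\partial\bH^2_\C$ given by the embedded matrix \eqref{eq:embedding}, and $\Psi^*\eta^\star=\omega^\star$, so $F^*\omega^\star=\lambda^\star\omega^\star$ for some positive $\lambda^\star$. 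Finally, writing $\zeta=-|z|^2+it$, the component $f_I$ in \eqref{eq:fSU} is holomorphic in the pair $(z,\zeta)$, and both $z$ and $\zeta$ are annihilated by the Heisenberg field $\overline Z$; since $\overline{\bZ}=\overline z\,\overline Z$ with $z\neq 0$, this gives $\overline{\bZ}f_I=0$, hence $\mu_F\equiv 0$ and $K_F\equiv 1$, so $F$ is $1$-quasiconformal.

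For the reverse inclusion, let $F=(f_I,f_3)$ be a smooth, orientation-preserving $1$-quasiconformal contact transformation of $\fH^\star$. By Definition \ref{defn:mdf} one always has $K_F\ge 1$, with equality exactly when $|\overline{\bZ}f_I|=0$; so $K_F\equiv 1$ forces $\overline{\bZ}f_I=0$, and since $\bZ=zZ$ with $z\neq 0$ this reads $\overline Z f_I=0$ on $\C_*\times\R$. The Beltrami identities $\overline{\bZ}f_I/\bZ f_I=\overline{\bZ}f_{II}/\bZ f_{II}$ give $\overline Z f_{II}=0$ as well, where $f_{II}=-|f_I|^2+if_3$. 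By the preceding proposition $F$ is then a conformal contact transformation of the connected open subset $\C_*\times\R$ of $\fH$; equivalently, transported by the CR diffeomorphism $\Psi$, the map $\hat F=\Psi\circ F\circ\Psi^{-1}=(f_{II},\sqrt 2\,f_I)$ is a $\cC^2$ CR diffeomorphism of the connected open set $\bH^\star=\partial\bH^2_\C\setminus\partial\bH^1_\C$. I would now invoke the rigidity (Liouville-type) theorem of Kor\'anyi and Reimann \cite{KR1,KR2,KR}: a conformal map of a domain of $\fH$ --- equivalently a CR diffeomorphism of a connected open subset of $\partial\bH^2_\C$ --- is the restriction of an element $g$ of the conformal group ${\rm PU}(2,1)$ of $\partial\bH^2_\C$.

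It remains to locate $g$. Since $\hat F$ maps $\bH^\star=\partial\bH^2_\C\setminus\partial\bH^1_\C$ onto itself, the homeomorphism $g$ of $\overline{\bH^2_\C}$ maps $\partial\bH^1_\C$ onto itself, hence preserves the complex geodesic $\bH^1_\C$ that it bounds; therefore $g$ lies in the stabiliser of $\bH^1_\C$ in ${\rm PU}(2,1)$, which is exactly the image of \eqref{eq:embedding}, i.e.\ ${\rm SU}(1,1)\times{\rm U}(1)$. Conjugating back by $\Psi$ and comparing with \eqref{eq:fSU} gives $F\in{\rm SU}(1,1)\times{\rm U}(1)$, completing the proof. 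The main obstacle is this second inclusion: it rests on applying the Heisenberg/CR rigidity theorem with the appropriate regularity and on a non-simply-connected domain, followed by the elementary-but-not-automatic observation that an element of ${\rm PU}(2,1)$ preserving $\partial\bH^2_\C\setminus\partial\bH^1_\C$ must stabilise $\bH^1_\C$; the inclusion $\supseteq$ is, by contrast, a short direct computation.
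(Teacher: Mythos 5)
Your proof is correct and follows essentially the same route as the paper: one inclusion by checking $\overline{\bZ}f_I=0$ and contactness for maps of the form \eqref{eq:fSU}, the other by passing to $\fH$ and invoking the Kor\'anyi--Reimann rigidity theorem (Theorem 8 of \cite{KR1}) together with the observation that only the stabiliser of $\bH^1_\C$ in ${\rm PU}(2,1)$ preserves $\fH^\star$. The only cosmetic difference is that you verify contactness by transporting the CR automorphism through $\Psi$, whereas the paper does a case-by-case computation over the $KAN$ decomposition; your version avoids the ``tedious but straightforward'' ${\rm SO}(2)$ calculation.
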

\begin{proof}
Recall from (\ref{eq:fSU}) that
that elements of ${\rm SU}(1,1)\times {\rm U}(1)$ may be represented by self-mappings of $\fH^\star$ of the form
$$
F(z,t)=\left(\frac{ze^{i\theta}}{ic(-|z|^2+it)+d},\;\Im\left(\frac{a(-|z|^2+it)+ib}{ic(-|z|^2+it)+d}\right)\right),
$$
where $ad+bc=1$. It is straightforward to show that $\overline{\bZ}f_I=0$, so we only have to show that $F$ is contact. To do show, we take under account the $KAN$ decomposition of ${\rm SU}(1,1)$:
$$
{\rm SU}(1,1)={\rm SO}(2)\cdot(\R_{>0},\cdot)\cdot(\R,+),
$$
and we examine each component and ${\rm U}(1)$ separately.

a) In the ${\rm U}(1)$ case ($a=d=1$, $b=c=0$), we simply have
$$
F(z,t)=\left(ze^{i\theta},\;t\right).
$$
This is the left-translation $L_{(e^{i\theta},0)}$.

b) In the $(\R,+)$ case ($a=d=1$, $c=0$, $\theta=0$), we obtain
$$
F(z,t)=\left(z, \;t+b\right).
$$
This is the left-translation $L_{(1,b)}$.

c) In the $(\R_{>0},\cdot)$ case $ad=1$, $a>0$, $b=c=\theta=0$, we obtain
$$
F(z,t)=\left(az, \;a^2t\right).
$$
This is the left-translation $L_{(a,0)}$.

d) In the ${\rm SO}(2)$ case ($a=d=\cos\phi$, $b=c=\sin\phi$, $\theta=0$), we have
$$
F(z,t)=\left(\frac{z}{i\sin\phi(-|z|^2+it)+\cos\phi},\;\Im\left(\frac{\cos\phi(-|z|^2+it)+i\sin\phi}{i\sin\phi(-|z|^2+it)+\cos\phi}\right)\right).
$$
Tedious but straightforward calculations show that $F^*(\omega^\star)=\omega^\star$. Note that in all cases $\lambda^\star=1$.

Now, a contact 1-quasiconformal mapping $F$ of $\fH^\star$ is a contact 1-quasiconformal mapping of $\fH$. According to Theorem 8 in \cite{KR1}, $F$ has to be an element of ${\rm PU}(2,1)$. But only ${\rm SU}(1,1)\times{\rm U}(1)$ $\subset{\rm PU}(2,1)$ preserves $\fH^\star$.
\end{proof}

\section{The Lifting Theorem}\label{sec:lift}

\subsection{Contact diffeomorphisms with constant Jacobian}\label{sec:cJ}
Smooth, orientation-preserving contact diffeomorphism with constant Jacobian determinant are of our particular interest. We start with the following. 

\begin{prop}\label{prop:constJ}
Suppose that $F=(f_I,f_3)$ is an orientation-preserving, $\cC^2$ contact diffeomorphism of $\fH^\star$. Then its Jacobian $\lambda^\star=const.$ if and only if the function $f_{II}:\fH^\star\to\calL$,  $f_{II}=$ $-|f_I|^2+if_3$ depends only on $\zeta=-|z|^2+it$. Moreover, in that case we have
 $$
 \arg(f_I)=\arg z +\varphi(\zeta),
 $$
 for some $\cC^2$ function $\varphi:\fH^\star\to\R$.
\end{prop}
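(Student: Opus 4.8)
The plan is to work directly with the contact conditions \eqref{eq:contact} and the Jacobian formula from the preceding Proposition, translating everything into statements about the functions $u=\log|f_I|$, $v=\arg(f_I)$ and $f_3$. Write $\zeta=-|z|^2+it$; recall that $\bZ=zZ$ where $Z=\partial_z+i\overline z\,\partial_t$, and that $\bZ\zeta=z(-\overline z+i(i\overline z))\,$ — more usefully, one computes $\bZ(-|z|^2+it)=z(-\overline z+i\cdot i\overline z)=-2z\overline z=-2|z|^2$ and $\overline{\bZ}\zeta=\overline z(-z + i\cdot(-i z))=0$, so $\overline{\bZ}$ annihilates $\zeta$ (equivalently $\overline{\bZ}$ annihilates any function of $\zeta$ alone, while $\bZ$ acts on it as $-2|z|^2\,\partial_\zeta$). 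This is the key structural fact: $\zeta$ is, up to the factor $|z|^2$, the holomorphic coordinate for the CR structure, which is why $\alpha$ (the Korányi map) is the natural projection.

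First I would prove the "if" direction. Suppose $f_{II}=-|f_I|^2+if_3$ is a function of $\zeta$ alone, say $f_{II}=g(\zeta)$. Then $\overline{\bZ}f_{II}=0$, and from the displayed formula $\lambda^\star=-\dfrac{\Im(\bZ\log|f_I|\cdot\overline{\bZ}f_3)}{|f_I|^2}$ together with the middle expression $\lambda^\star=2\Im(\bZ\log|f_I|\cdot\overline{\bZ}\arg(f_I))$ one sees that everything is expressed through $\bZ f_{II}=g'(\zeta)\,\bZ\zeta=-2|z|^2 g'(\zeta)$ and its conjugate; substituting and using $|f_I|^2=-\Re f_{II}=-\Re g(\zeta)$ gives $\lambda^\star$ as an explicit expression in $g,g'$ and $\zeta$ only — and a short computation (this is the one routine calculation I would actually carry out) shows it equals $|g'(\zeta)|^2\big/\big(\!-\Re g(\zeta)\big)^2$ times a constant, i.e. it is manifestly independent of $\arg z$; but by left-invariance of $\omega^\star$ and the group structure, a contact diffeomorphism whose $\lambda^\star$ depends only on $\zeta$ must in fact have $\lambda^\star$ constant — alternatively, and more cleanly, I would note that $\lambda^\star$ is a function of $\zeta$ alone and then observe directly from $d(F^*\omega^\star)=d(\lambda^\star\omega^\star)$, i.e. $\lambda^\star\,d\omega^\star+d\lambda^\star\wedge\omega^\star=F^*(d\omega^\star)=2\,F^*(\phi^\star\wedge\psi^\star)$, that the right-hand side is also a function of $\zeta$; evaluating this $2$-form against the frame $\{\bX,\bY,\bT\}$ and using $d\lambda^\star=(\partial_\zeta\lambda^\star)\,d\zeta$ forces $\partial_\zeta\lambda^\star=0$. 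I would choose whichever of these closing arguments is shortest once the formula for $\lambda^\star$ is in hand.

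For the "only if" direction, assume $\lambda^\star$ is a nonzero constant $c$. The third contact condition gives $\bT f_3/(2|f_I|^2)+\bT\arg(f_I)=c$, and $\overline{\bZ}\arg(f_I)=-\overline{\bZ}f_3/(2|f_I|^2)$ from the second. The goal is to show $\overline{\bZ}f_{II}=0$, i.e. $\overline{\bZ}(-|f_I|^2+if_3)=0$, equivalently $\overline{\bZ}f_3 = i\,\overline{\bZ}|f_I|^2 = i|f_I|^2\,\overline{\bZ}(2\log|f_I|)=2i|f_I|^2\,\overline{\bZ}u$. Combining this with the second contact condition $\overline{\bZ}f_3 = -2|f_I|^2\,\overline{\bZ}v$ shows that $\overline{\bZ}f_{II}=0$ is equivalent to the single relation $\overline{\bZ}(u+iv)=0$, i.e. $\overline{\bZ}\log f_I=0$ — which says precisely that $f_I$ is "CR-holomorphic". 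So the real content is: $\lambda^\star$ constant $\implies \overline{\bZ}\log f_I=0$. Here I would use the first displayed expression $\lambda^\star=|\bZ\log f_I|^2-|\overline{\bZ}\log f_I|^2$ and differentiate. Since $F$ is $\cC^2$ the function $\log f_I$ satisfies a Beltrami-type system; applying $\bZ$ and $\overline{\bZ}$ to $|\bZ\log f_I|^2-|\overline{\bZ}\log f_I|^2=c$ and using the structure equation $[\bZ,\overline{\bZ}]$ (computable from $[\bX,\bY]=2(\bY-\bT)$, so $[\bZ,\overline{\bZ}]=-\tfrac{i}{2}[\bX,\bY]=-i(\bY-\bT)$, which lies in the span of $\bT$ and $\bY$ — in particular its $\bT$-component is the Reeb direction) produces, after using the contact conditions to eliminate $\bT$-derivatives, an identity forcing $\overline{\bZ}\log f_I\equiv 0$. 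Once $\overline{\bZ}\log f_I=0$ is established, $f_I$ is CR-holomorphic, hence (writing $f_I=e^{\log f_I}$) $\log f_I$ is a CR function, so $\log f_I-\log z$ is a CR function killed by $\overline{\bZ}$; since $\overline{\bZ}$ annihilates exactly functions of $\zeta$, we get $\log f_I = \log z + \psi(\zeta)$ for some $\cC^2$ function $\psi$, whence $\arg f_I=\arg z+\varphi(\zeta)$ with $\varphi=\Im\psi$; this is exactly the "Moreover" clause. Finally, knowing $\overline{\bZ}\log f_I=0$ and re-running the equivalence above gives $\overline{\bZ}f_{II}=0$, and $\bT f_{II}$: from $\omega^\star=d\arg f_I+\tfrac{df_3}{2|f_I|^2}$ being a multiple of $\omega^\star$ one checks $\overline{\bZ}f_{II}=\bT$-type derivative relations forcing $f_{II}$ to be a function of $\zeta$ alone — concretely, $\overline{\bZ}f_{II}=0$ and the real-valuedness of $\omega^\star$ give also $\bZ\overline{f_{II}}=0$, and the $\bT$-component is pinned down by the third contact condition, so $f_{II}$ descends through $\alpha$, i.e. depends only on $\zeta$.

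The main obstacle is the core implication of the "only if" direction: extracting $\overline{\bZ}\log f_I\equiv 0$ from the single scalar equation $|\bZ\log f_I|^2-|\overline{\bZ}\log f_I|^2=\mathrm{const}$. The subtlety is that this is one real equation for a complex unknown, so one genuinely needs the extra rigidity coming from the contact conditions (which couple the $\bT$-derivatives to the horizontal ones) and from the non-trivial bracket $[\bZ,\overline{\bZ}]$ having a Reeb component — this is where the geometry of $\fH^\star$, as opposed to an arbitrary pair of vector fields, enters decisively. I expect the cleanest route is to differentiate the constancy relation, substitute the contact conditions to remove all $\bT f_I$ terms, and recognize the resulting expression as a non-negative (or definite) quadratic form in the horizontal derivatives of $\overline{\bZ}\log f_I$ that can only vanish if $\overline{\bZ}\log f_I$ does.
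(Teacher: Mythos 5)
Your argument rests on a false structural premise stated in your opening paragraph: that ``$\overline{\bZ}$ annihilates any function of $\zeta$ alone''. It does not. One has $\overline{\bZ}\zeta=0$ but $\overline{\bZ}\,\overline{\zeta}=-2|z|^2\neq 0$, so for a general function $h$ of the point $\zeta=-|z|^2+it$ one gets $\overline{\bZ}h=2\Re(\zeta)\,h_{\overline{\zeta}}$, which vanishes only when $h$ is \emph{holomorphic} in $\zeta$. The correct characterisation of ``depends only on $\zeta$'' is $\bT h=0$ (Lemma \ref{lem-T}), not $\overline{\bZ}h=0$. This derails your ``only if'' direction entirely: you set out to prove $\overline{\bZ}f_{II}=0$, equivalently $\overline{\bZ}{\rm Log}(f_I)=0$, i.e.\ $\mu_F\equiv 0$ --- but that would say every constant-Jacobian contact diffeomorphism of $\fH^\star$ is conformal, which is false. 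The lifted twist maps of Section \ref{sec:examples} (indeed every nontrivial output of the Lifting Theorem) have $\lambda^\star\equiv 1$ and $\mu_F\neq 0$. So the ``identity forcing $\overline{\bZ}{\rm Log}(f_I)\equiv 0$'' that you hope to extract by differentiating $|\bZ{\rm Log}(f_I)|^2-|\overline{\bZ}{\rm Log}(f_I)|^2=c$ cannot exist; the obstacle you flag at the end is not a technical subtlety but a symptom of the target being wrong. The same confusion infects your ``Moreover'' clause: you conclude that $\log f_I-\log z$ is CR, which would force $\varphi$ to be the imaginary part of a holomorphic function --- strictly stronger than what the proposition asserts and false for the examples.

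What is actually needed in the ``only if'' direction is $\bT f_{II}=0$, and the paper gets it by exterior algebra rather than by differentiating the Jacobian formula: normalise $\lambda^\star=1$, take $d$ of the contact identity $d\arg(f_I)+\frac{df_3}{2|f_I|^2}=\omega^\star$ to get $d\log|f_I|\wedge df_3=|f_I|^2\,d\omega^\star$, take $d$ once more to get $d|f_I|^2\wedge d\omega^\star=0$, whence $\bT(|f_I|^2)=0$; then evaluate $F^*(d\omega^\star)$ on the pair $(\bZ,\bT)$ to obtain $\bT f_3\cdot\bigl(\bZ f_{II}+\bZ\overline{f_{II}}\bigr)=0$, and the second factor cannot vanish without killing $\lambda^\star$, so $\bT f_3=0$. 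Your ``if'' direction is essentially the paper's (the identity $\bZ\lambda^\star=d\omega^\star(F_*\bZ,F_*\bT)=0$ when $\bT f_{II}=0$) and is salvageable, except that your intermediate claim $\overline{\bZ}f_{II}=0$ should read $\overline{\bZ}f_{II}=2\Re(\zeta)(f_{II})_{\overline{\zeta}}$ and $d\lambda^\star$ has a $d\overline{\zeta}$ component as well; one still concludes $\bZ\lambda^\star=\overline{\bZ}\lambda^\star=\bT\lambda^\star=0$. Finally, the ``Moreover'' clause comes from the third contact condition: once $\bT f_3=0$ it gives $\bT\arg(f_I)=\lambda^\star=1=\bT\arg(z)$, so $\arg(f_I)-\arg(z)$ is killed by $\bT$ and is therefore a function of the point $\zeta$ by Lemma \ref{lem-T} --- no holomorphy is involved.
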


For the proof we shall need the next lemma.

\begin{lem}\label{lem-T}
 Let $h:\fH^\star\to\C$ be a $\cC^1$ function. Then if $h=h(\zeta)$, $\zeta=-|z|^2+it$, we have
 \begin{enumerate}
  \item $\bZ h=2\Re(\zeta)h_\zeta$, $\overline{\bZ} h=2\Re(\zeta)h_{\overline \zeta}$ and $\bT h=0$.
  \item $h=h(\zeta)$ if and only if
 $
 \bT h=0.
 $
 \end{enumerate}
\end{lem}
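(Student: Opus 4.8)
The plan is to work directly with the vector fields in the $(z,t)$-coordinates, using the identifications $\bZ = zZ$ and $\overline{\bZ}=\overline z\,\overline Z$ from (\ref{eq-XYT}) and the surrounding discussion, together with the Heisenberg fields $Z = \partial_z + i\overline z\,\partial_t$ and $\overline Z = \partial_{\overline z} - iz\,\partial_t$. The key point is to compute how $\bZ$, $\overline{\bZ}$ and $\bT$ act on the coordinate function $\zeta = -|z|^2 + it = -z\overline z + it$, and then invoke the chain rule for a $\cC^1$ function $h = h(\zeta,\overline\zeta)$.

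First I would compute $\bZ\zeta$. Since $\bZ = z\bigl(\partial_z + i\overline z\,\partial_t\bigr)$, we get $\bZ\zeta = z\bigl(\partial_z(-z\overline z + it) + i\overline z\,\partial_t(-z\overline z + it)\bigr) = z\bigl(-\overline z + i\overline z\cdot i\bigr) = z(-\overline z - \overline z) = -2|z|^2 = 2\Re(\zeta)$, using $\Re(\zeta) = -|z|^2$. Similarly $\bZ\overline\zeta = z\bigl(\partial_z(-z\overline z - it) + i\overline z\,\partial_t(-z\overline z - it)\bigr) = z(-\overline z + \overline z) = 0$. By conjugation, $\overline{\bZ}\,\overline\zeta = 2\Re(\zeta)$ and $\overline{\bZ}\zeta = 0$. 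For $\bT$: recalling from (\ref{eq-XYT}) that $\bT = Y^\star = x\partial_y - y\partial_x$, which annihilates $|z|^2 = x^2+y^2$ and also annihilates $t$, we get $\bT\zeta = 0$ and $\bT\overline\zeta = 0$. Now if $h = h(\zeta)$ depends only on $\zeta$ (holomorphically in $\zeta$, or more generally $h = h(\zeta,\overline\zeta)$), the chain rule gives $\bZ h = h_\zeta\,\bZ\zeta + h_{\overline\zeta}\,\bZ\overline\zeta = 2\Re(\zeta)h_\zeta$, likewise $\overline{\bZ} h = 2\Re(\zeta)h_{\overline\zeta}$, and $\bT h = h_\zeta\,\bT\zeta + h_{\overline\zeta}\,\bT\overline\zeta = 0$. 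This proves (1) and the forward direction of (2).

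For the converse in (2), suppose $\bT h = 0$. Since $\{\bX,\bY,\bT\}$ is a frame, it suffices to observe that the integral curves of $\bT = x\partial_y - y\partial_x$ through a point $(z_0,t_0)$ are the circles $s\mapsto (z_0 e^{is}, t_0)$; these are exactly the fibres of the map $(z,t)\mapsto(|z|^2, t)$, equivalently of $(z,t)\mapsto\zeta = -|z|^2+it$, by the Corollary following the proposition on $\alpha$ (the Reeb flow traces out circles with centres on the $t$-axis). So $\bT h = 0$ forces $h$ to be constant on each such circle, hence $h$ factors through $(z,t)\mapsto\zeta$, i.e.\ $h = h(\zeta)$. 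The only mild subtlety worth a sentence is connectedness of the fibres: each fibre $\alpha^{-1}(\zeta_0) = \{(\Re^{1/2}(-\zeta_0)e^{i\theta},\Im(\zeta_0)) : \theta\in\R\}$ is a single circle, so "constant along the flow" genuinely means "constant on the fibre," with no monodromy obstruction. I do not expect any real obstacle here; the computation of $\bZ\zeta$, $\bZ\overline\zeta$ is the heart of the matter and is a two-line check.
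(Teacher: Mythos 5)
Your proof is correct and takes essentially the same route as the paper: part (1) is the chain rule (the paper leaves the computation of $\bZ\zeta=2\Re(\zeta)$, $\bZ\overline{\zeta}=0$, $\bT\zeta=0$ implicit, which you carry out correctly), and for part (2) the paper likewise observes that $\bT h=0$ is the rotational PDE $x h_y-y h_x=0$ whose solutions are exactly the functions of $|z|^2$ and $t$. Your discussion of the integral circles of $\bT$ and the connectedness of the fibres of $\alpha$ is just a fleshed-out version of that same argument.
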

\begin{proof}
The first statement follows by applying the chain rule. As for the second,
 the vector field $\bT$ defines the partial differential equation
 $$
 x\frac{\partial h}{\partial y}-y\frac{\partial h}{\partial x}=0.
 $$
 The solutions of this equation are all of the form $H(|z|^2,t)=0$.
\end{proof}

\medskip

\noindent{\it Proof of Proposition \ref{prop:constJ}}

Suppose first that  $F=(f_I,f_3)$ is an orientation-preserving, $\cC^2$ contact diffeomorphism of $\fH^\star$ with constant $\lambda^\star$, then the mapping
 $$
 \left(\frac{f_I}{\sqrt{\lambda^\star}},\frac{f_3}{\lambda^\star}\right),
 $$
 is also contact and has Jacobian determinant equal to one. Thus we may always normalise so that $\lambda^\star\equiv 1$.
 So suppose first that $\lambda^\star=1$ and take differentials at both sides of the relation (\ref{eq:differentials}) to obtain
 $$
 \frac{d|f_I|^2\wedge df_3}{2|f_I|^4}=d\omega^\star.
 $$
 This is written also as
 $$
 d\log(|f_I|)\wedge df_3=|f_I|^2 d\omega^\star
 $$
 and by taking the differentials at both sides we have
 $$
 d|f_I|^2\wedge d\omega^\star=0.
 $$
 Using
 $$
 d|f_I|^2=\bZ(|f_I|^2)(\phi^\star+i\psi^\star)+\overline{\bZ}(|f_I|^2)(\phi^\star-i\psi^\star)+\bT(|f_I|^2)\omega^\star
 $$
 and
 $$
 d\omega^\star=i\;(\phi^\star+i\psi^\star)\wedge(\phi^\star-i\psi^\star),
 $$
 we get
 $$
 0=i\bT(|f_I|^2)\cdot dm.
 $$
 Therefore, by Lemma \ref{lem-T}  $|f_I|$ depends only on $\zeta$. 
 
 On the other hand, we have
 $$
 0=-id\omega^\star(\bZ,\bT)=-i(F^* d\omega^\star)(\bZ,\bT)=id\omega^\star(F_*\bZ,F_*\bT)=(\phi^\star+i\psi^\star)\wedge(\phi^\star-i\psi^\star)(F_*\bZ,F_*\bT).
 $$
 Thus
 \begin{eqnarray*}
  0&=&(\phi^\star+i\psi^\star)\wedge(\phi^\star-i\psi^\star)(F_*\bZ,F_*\bT)\\
  &=&\langle\phi^\star+i\psi^\star, F_*\bZ\rangle\cdot\langle\phi^\star-i\psi^\star, F_*\bT\rangle-
  \langle\phi^\star+i\psi^\star, F_*\bT\rangle\cdot\langle\phi^\star-i\psi^\star, F_*\bZ\rangle\\
  &=&\frac{1}{4|f_I|^2}\left(\bZ f_{II}\bT\overline{f_{II}}-\bZ\overline{f_{II}}\bT f_{II}\right).
 \end{eqnarray*}
But
$$
\bT f_{II}=i\bT f_3,\quad \bT\overline{f_{II}}=-i\bT f_3.
$$
Thus
$$
\frac{-i\bT f_3}{4|f_I|^2}(\bZ f_{II}+\bZ\overline{f_{II}})=0.
$$
Now $\bZ f_{II}+\bZ\overline{f_{II}}\neq 0$; if that was the case in some point, then $\lambda^\star$ would vanish at this point, a contradiction. Therefore $\bT f_3=0$ and our first claim is proved.

We now prove that if $f_{II}$ depends only on $\zeta$, that is, $\bT f_{II}=0$, then $\lambda^\star=const.$ For this, we consider the differential of $\lambda^\star$:
$$
d\lambda^\star=\bZ\lambda^\star(\phi^\star+i\psi^\star)+\overline{\bZ}\lambda^\star(\phi^\star-i\psi^\star)+\bT\lambda^\star\omega^\star
$$
and we will show that it is equal to zero. Since
$$
F^*d\omega^\star=d(F^*\omega^\star)=d(\lambda^\star\omega^\star)=d\lambda^\star\wedge\omega^\star+\lambda^\star d\omega^\star,
$$
we have
$$
F^*d\omega^\star=\bZ\lambda^\star(\phi^\star+i\psi^\star)\wedge\omega^\star+\overline{\bZ}\lambda^\star(\phi^\star-i\psi^\star)\wedge\omega^\star+i\lambda^\star(\phi^\star+i\psi^\star)\wedge(\phi^\star-i\psi^\star).
$$
Now, in the first place
$$
F^*d\omega^\star(\bZ,\bT)=\bZ\lambda^\star(\phi^\star+i\psi^\star)\wedge\omega^\star(\bZ,\bT)=\bZ\lambda^\star.
$$
On the other hand,
\begin{eqnarray*}
 F^*d\omega^\star(\bZ,\bT)&=&d\omega^\star(F_*\bZ,F_*\bT)\\
 &=&i(\phi^\star+i\psi^\star)\wedge(\phi^\star-i\psi^\star)(F_*\bZ,F_*\bT)\\ 
 &=&\frac{i}{4|f_I|^2}\left(\bZ f_{II}\bT\overline{f_{II}}-\bT f_{II}\bZ\overline{f_{II}}\right)\\
 &=&0.
\end{eqnarray*}
Thus $\bZ\lambda^\star=0$ and similarly $\overline{\bZ}\lambda^\star=0$. Finally, we have that
$$
\lambda^\star=\frac{|\bZ f_{II}|^2-|\overline{\bZ} f_{II}|^2}{4|f_I|^4}.
$$
It suffices to prove that $\bT\bZ f_{II}=0$. Indeed,
$$
\bT\bZ f_{II}=-\bZ\bT f_{II}=0
$$
and the proof is concluded.
\qed

\subsection{The Lifting Theorem}

Smooth contact maps of $\fH^\star$ with constant Jacobian admit the following geometric interpretation. 

\begin{defn}
 A bijection $F:\fH^\star\to\fH^\star$ is called {\it circles-preserving} if for every $\zeta\in\calL$ there exists an $\eta\in\calL$ such that
 $$
 F(\alpha^{-1}(\zeta))=\alpha^{-1}(\eta).
 $$
\end{defn}
That is, a circles-preserving mapping $F=(f_I,f_3)$ preserves the fibres of the Kor\'anyi map $\alpha$. Such a mapping defines a bijection $f:\calL\to\calL$ by the rule
$$
f\circ\alpha=\alpha\circ F=f_{II}.
$$
We now immediately have the following:
\begin{cor}
An orientation-preserving, $\cC^2$ contact diffeomorphism $F=(f_I,f_3)$ of $\fH^\star$ has constant $\lambda^\star$ if and only if $F$ is circles-preserving.
\end{cor}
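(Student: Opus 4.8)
The plan is to reduce everything to Proposition~\ref{prop:constJ}, which already identifies constancy of $\lambda^\star$ with the condition that $f_{II}=-|f_I|^2+if_3$ be a function of $\zeta=-|z|^2+it$ alone. The one extra ingredient is the observation that $f_{II}=\alpha\circ F$ (indeed $\alpha(f_I,f_3)=-|f_I|^2+if_3=f_{II}$) and that the fibre $\alpha^{-1}(\zeta)$ is exactly the level set $\{(z,t):\alpha(z,t)=\zeta\}$, i.e.\ an integral circle of the Reeb field $\bT$. So it suffices to show that $f_{II}$ depends only on $\zeta$ if and only if $F$ is circles-preserving.

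For the implication ``circles-preserving $\Rightarrow$ $\lambda^\star=const$'': if $F$ is circles-preserving, then for each $\zeta\in\calL$ there is $\eta(\zeta)\in\calL$ with $F(\alpha^{-1}(\zeta))=\alpha^{-1}(\eta(\zeta))$, so evaluating $\alpha\circ F=f_{II}$ along the fibre gives $f_{II}(z,t)=\alpha(F(z,t))=\eta(\zeta)=\eta(\alpha(z,t))$; thus $f_{II}=\eta\circ\alpha$ is a function of $\zeta$. By Lemma~\ref{lem-T}(2), $\bT f_{II}=0$, and Proposition~\ref{prop:constJ} gives $\lambda^\star=const$.

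For the converse, assume $\lambda^\star=const$. By Proposition~\ref{prop:constJ}, $f_{II}$ depends only on $\zeta$, so $f_{II}=g\circ\alpha$ for a (necessarily $\cC^2$) map $g:\calL\to\calL$; equivalently $\alpha\circ F=g\circ\alpha$, and in particular $F(\alpha^{-1}(\zeta))\subseteq\alpha^{-1}(g(\zeta))$ for every $\zeta$. I would then upgrade this inclusion to an equality: $\alpha^{-1}(\zeta)$ is a closed, connected, embedded $1$-submanifold of $\fH^\star$ and $F$ is a homeomorphism, so $F(\alpha^{-1}(\zeta))$ is closed in $\fH^\star$, hence in $\alpha^{-1}(g(\zeta))$; and since $\alpha\circ F=g\circ\alpha$, the restriction $F\colon\alpha^{-1}(\zeta)\to\alpha^{-1}(g(\zeta))$ has injective differential carrying fibre-tangents to fibre-tangents, so it is an immersion between $1$-manifolds, hence open. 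A nonempty clopen subset of the connected circle $\alpha^{-1}(g(\zeta))$ is everything, so $F(\alpha^{-1}(\zeta))=\alpha^{-1}(g(\zeta))$ and $F$ is circles-preserving. One can replace this topological step by noting that $(F^{-1})^*\omega^\star=(1/\lambda^\star)\omega^\star$ is again a constant multiple of $\omega^\star$, so Proposition~\ref{prop:constJ} applies to $F^{-1}$ too, yielding $\alpha\circ F^{-1}=h\circ\alpha$ with $h\circ g=\mathrm{id}_\calL$ (from $\alpha=\alpha\circ F^{-1}\circ F$); this forces $g$ injective, whence the inclusion is an equality.

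Once Proposition~\ref{prop:constJ} is in hand this is essentially bookkeeping; the only step needing a little care is precisely the promotion of the fibrewise inclusion $F(\alpha^{-1}(\zeta))\subseteq\alpha^{-1}(g(\zeta))$ to an honest equality, handled either by connectedness of the $\alpha$-fibres or by applying Proposition~\ref{prop:constJ} to $F^{-1}$.
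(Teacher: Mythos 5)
Your proof is correct and follows the same route the paper intends: the paper states this corollary as an immediate consequence of Proposition \ref{prop:constJ} together with the identity $f_{II}=\alpha\circ F$, which is exactly your reduction. The only thing you add is the (worthwhile) verification that the fibrewise inclusion $F(\alpha^{-1}(\zeta))\subseteq\alpha^{-1}(g(\zeta))$ is actually an equality, and both of your arguments for that step are sound.
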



We are now set to prove our main theorem. We start with a lemma.

\begin{lem}\label{lem:psi}
Let $ f:\calL\to\calL$ be a smooth symplectic diffeomorphism. Then there exists a function $\psi:\calL\to\R$ such that
\begin{equation}\label{eq:psi}
 \psi_\zeta=\frac{i}{4\Re(\zeta)}+\frac{(\Im( f))_\zeta}{2\Re( f)}.
\end{equation}
\end{lem}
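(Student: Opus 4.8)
The plan is to show that the $1$-form $\beta$ on $\calL$ defined by
$$
\beta = \Im\!\left(\frac{d\zeta}{4\Re(\zeta)}\right) + \Im\!\left(\frac{d f}{2\Re( f)}\right)
$$
is exact; once this is established, the desired $\psi$ is a primitive of $\beta$, and unravelling the definition of $\psi_\zeta$ gives exactly \eqref{eq:psi}. Indeed, writing $\psi_\zeta = \tfrac12(\psi_\xi - i\psi_\eta)$ and $d\psi = \psi_\xi\,d\xi + \psi_\eta\,d\eta$, the statement that $\psi_\zeta$ equals the right-hand side of \eqref{eq:psi} is equivalent to the statement that $d\psi$ equals a specific real $1$-form $\beta$ built from $\zeta$ and $f$; so the whole lemma reduces to checking that $\beta$ is closed and then invoking Poincar\'e's Lemma on $\calL$ (which is simply connected, indeed convex).

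The key computation is therefore $d\beta = 0$. I would split $\beta = \beta_1 + \beta_2$ with $\beta_1 = \Im\!\big(d\zeta/(4\Re\zeta)\big)$ and $\beta_2 = \Im\!\big(d f/(2\Re f)\big)$. For $\beta_1$, a direct calculation in coordinates $\zeta = \xi + i\eta$ gives $\beta_1 = -d\eta/(4\xi)$, whose exterior derivative is $d\xi \wedge d\eta/(4\xi^2)$ — a nonzero multiple of the hyperbolic area form on $\calL$. For $\beta_2$, since $f:\calL\to\calL$, the same identity applied at the target gives $\beta_2 = f^*\big(\!-d\eta'/(4\xi')\big)$ where $\zeta' = \xi' + i\eta'$ is the coordinate on the target copy of $\calL$; hence $d\beta_2 = f^*\big(d\xi'\wedge d\eta'/(4\xi'^2)\big)$, which is the pullback under $f$ of the hyperbolic area form. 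Thus
$$
d\beta = \frac{d\xi\wedge d\eta}{4\xi^2} - f^*\!\left(\frac{d\xi'\wedge d\eta'}{4\xi'^2}\right).
$$
This is where the hypothesis on $f$ enters: $f$ is \emph{symplectic} with respect to the relevant K\"ahler (area) form, which is precisely the statement that $f$ preserves $d\xi\wedge d\eta/(4\xi^2)$ up to the convention fixed in the paper (equivalently, in the Euclidean normalisation used in Theorem~\ref{thm:liftC}, $J_f = |f_\zeta|^2 - |f_{\bar\zeta}|^2 = 1$, and the hyperbolic area form is a fixed function times the Euclidean one, so the two notions of ``symplectic'' agree here). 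Therefore the two terms cancel and $d\beta = 0$.

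The main obstacle is bookkeeping rather than conceptual: one must be careful that ``symplectic'' in the hypothesis is taken with respect to the hyperbolic K\"ahler form $g_h$ on $\calL$ (or reconcile it with the Euclidean convention), so that the cancellation above is exact and not merely up to a nonconstant factor. Once the conventions are pinned down, closedness is immediate, Poincar\'e's Lemma produces $\psi:\calL\to\R$ with $d\psi = \beta$, and separating real and imaginary parts of $2\psi_\zeta = \psi_\xi - i\psi_\eta$ against $\beta = \beta_\xi\,d\xi + \beta_\eta\,d\eta$ recovers \eqref{eq:psi} after identifying $\beta_\xi - i\beta_\eta$ with $\dfrac{i}{2\Re(\zeta)} + \dfrac{(\Im f)_\zeta}{\Re f}$ times the appropriate factor, i.e.\ with twice the right-hand side of \eqref{eq:psi}.
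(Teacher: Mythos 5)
Your overall strategy is sound and is essentially the paper's own proof in invariant dress: the paper sets $g(\zeta)=\frac{i}{4\Re(\zeta)}+\frac{(\Im f)_\zeta}{2\Re(f)}$, verifies $\overline{g}_\zeta-g_{\overline{\zeta}}=0$ by a direct computation that uses the symplectic identity $\Re^2(f)=\Re^2(\zeta)J_f$, and then applies Poincar\'e's Lemma to the real $1$-form $g\,d\zeta+\overline{g}\,d\overline{\zeta}$. Your observation that closedness of this form is exactly the statement $f^*\big(\frac{d\xi'\wedge d\eta'}{4\xi'^2}\big)=\frac{d\xi\wedge d\eta}{4\xi^2}$ is correct in spirit and in fact explains \emph{why} the paper's computation closes up.

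However, the $1$-form you actually write down is not the correct one, and with your coefficients the two contributions to $d\beta$ do not cancel. Since $\psi$ is real, $\psi_\zeta=g$ is equivalent to $d\psi=g\,d\zeta+\overline{g}\,d\overline{\zeta}=2\Re(g\,d\zeta)$; writing $\zeta=\xi+i\eta$, $f=u+iv$, this gives
\[
\beta=\frac{dv}{2u}-\frac{d\eta}{2\xi},
\]
in agreement with the system (\ref{eq:psisys}), whereas your $\beta$ equals $\frac{d\eta}{4\xi}+\frac{dv}{2u}$. (Note also that $\Im\big(d\zeta/(4\xi)\big)=+d\eta/(4\xi)$, not $-d\eta/(4\xi)$, and that $\Im\big(df/(2\Re f)\big)=f^*\big(d\eta'/(2\xi')\big)$, not $f^*\big(-d\eta'/(4\xi')\big)$; as literally defined, your $d\beta$ is $-\frac{d\xi\wedge d\eta}{4\xi^2}-\frac{du\wedge dv}{2u^2}$, which is nonzero.) With the corrected $\beta$ one gets
\[
d\beta=\frac{d\xi\wedge d\eta}{2\xi^2}-\frac{du\wedge dv}{2u^2}=\left(\frac{1}{2\xi^2}-\frac{J_f}{2u^2}\right)d\xi\wedge d\eta,
\]
which vanishes precisely when $\Re^2(f)=\Re^2(\zeta)J_f$. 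This pins down the second issue: your parenthetical asserting that the hyperbolic and Euclidean notions of symplectic ``agree here'' is false, since the conformal factor $1/(4\xi^2)$ is not constant; the hypothesis you must use (and the one the paper uses) is $J_f=\Re^2(f)/\Re^2(\zeta)$, not $J_f=1$. Once these constants and signs are repaired, Poincar\'e's Lemma on the convex set $\calL$ finishes the proof exactly as you indicate.
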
 
\begin{proof}
Since $f$ is symplectic we have
\begin{equation}\label{eq:kift2}
 \Re^2(f)(\zeta)=\Re^2(\zeta) J_{f}(\zeta),
\end{equation}
where $J_f$ is the Jacobian determinant of $f$.
We have
\begin{eqnarray*}
J_{f}&=&|f_\zeta|^2-|f_{\overline{\zeta}}|^2\\
&=&|(\Re(f))_\zeta+i(\Im(f))_\zeta|^2-|(\Re(f))_{\overline \zeta}+i(\Im(f))_{\overline \zeta}|^2\\
&=&|(\Re(f))_\zeta+i(\Im(f))_\zeta|^2-|(\Re(f))_{ \zeta}-i(\Im(f))_{\zeta}|^2\\
&=&-4\Im\left((\Re(f))_\zeta(\Im(f))_{\overline \zeta}\right)\\
&=&2i\left((\Re(f))_\zeta(\Im(f))_{\overline \zeta}-(\Re(f))_{\overline \zeta}(\Im(f))_{\zeta}\right).
\end{eqnarray*}
Therefore we write (\ref{eq:kift2}) again as
\begin{equation}\label{eq:kift3}
\Re^2(f)(\zeta)=2i\Re^2(\zeta)\left((\Re(f))_\zeta(\Im(f))_{\overline \zeta}-(\Re(f))_{\overline \zeta}(\Im(f))_{\zeta}\right).
\end{equation}
We next set
$$
g(\zeta)=\frac{i}{4\Re(\zeta)}+\frac{(\Im(f))_\zeta}{2\Re(f)}.
$$
Then
$$
g_{\overline \zeta}=-\frac{i}{8\Re^2(\zeta)}+\frac{(\Im(f))_{\zeta\overline\zeta}\cdot\Re(f)-(\Re(f))_{\overline \zeta}(\Im(f))_{\zeta}}{8\Re^2(f)}
$$
and
$$
{\overline g}_{\zeta}=\frac{i}{8\Re^2(\zeta)}+\frac{(\Im(f))_{\zeta\overline\zeta}\cdot\Re(f)-(\Re(f))_{\zeta}(\Im(f))_{\overline \zeta}}{8\Re^2(f)}.
$$
Using (\ref{eq:kift3}) we find that
$$
{\overline g}_{\zeta}-g_{\overline{\zeta}}=0.
$$
We next consider the real 1-form
$$
\beta=g(\zeta)d\zeta+\overline{g(\zeta)}d\overline{\zeta}.
$$
By the previous equation we have that $\beta$ is closed and hence exact by Poincar\'e's Lemma. It follows that
$$
\beta=d\psi
$$
for some real function $\psi$ and the proof is concluded.
\end{proof}
\begin{rem}
Write $\zeta=\xi+i\eta$ and $f=u+iv$. Then Equation (\ref{eq:psi}) is written equivalently as the system of p.d.e's:
\begin{equation}\label{eq:psisys}
\psi_\xi=\frac{v_\xi}{2u},\quad \psi_\eta=\frac{v_\eta}{2u}-\frac{1}{2\xi}.
\end{equation}
\end{rem}
We now state and prove our main theorem.
\begin{thm}{\bf (Lifting Theorem)}\label{thm:lift}
Let $f:\calL\to\calL$ be a mapping which is symplectic with respect to the K\"ahler symplectic form of $\calL$. Let also $\psi$ be as in Lemma \ref{lem:psi}.
 Then the map $F:\fH^\star\to\fH^\star$, $F=(f_I,f_3)$, where
 $$
 f_I(z,t)=zJ_{ f}^{1/4}(-|z|^2+it)e^{i\psi(-|z|^2+it)},\quad f_3(z,t)=\Im( f(-|z|^2+it)),
 $$
 is an orientation-preserving, circles-preserving contact transformation of $\fH^\star$.
 If moreover $f$ is quasiconformal with Beltrami coefficient $\mu_f$, then $F$ is quasiconformal and
 $$
 \mu_F(z,t)=\mu_f(-|z|^2+it).
 $$ 
$f$ is symplectic w.r.t. the K\"ahler form of $\calL$.
\end{thm}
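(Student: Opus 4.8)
The plan is to build everything on a single identity: that $f_{II}:=-|f_I|^2+if_3$ equals $f(\zeta)$ with $\zeta=-|z|^2+it$, so that $F$ genuinely lifts $f$ along the Kor\'anyi map $\alpha$. Granting this, Proposition \ref{prop:constJ} immediately supplies the ``circles-preserving, constant $\lambda^\star$'' part, the contact conditions reduce to the defining equation (\ref{eq:psi}) of $\psi$, and the Beltrami identity follows from a short computation with logarithmic derivatives. So first I would compute $f_{II}$: since $e^{i\psi(\zeta)}$ is unimodular and $J_f^{1/4}(\zeta)>0$ we have $|f_I|^2=|z|^2J_f^{1/2}(\zeta)$; using $\Re(\zeta)=-|z|^2$ and the relation $\Re^2(f)=\Re^2(\zeta)J_f$ (valid because $f$ is symplectic, cf. (\ref{eq:kift2})) together with $\Re(f)<0$, $\Re(\zeta)<0$ on $\calL$, this gives $J_f^{1/2}(\zeta)=\Re(f)(\zeta)/\Re(\zeta)$, hence $|f_I|^2=-\Re(f)(\zeta)$ and $f_{II}=\Re(f)(\zeta)+i\Im(f)(\zeta)=f(\zeta)$. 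Then $f_{II}$ depends only on $\zeta$, so by Proposition \ref{prop:constJ} (together with the ensuing corollary) $\lambda^\star$ is constant and $F$ is circles-preserving with induced map $f$ on $\calL$; and the explicit formula for $f_I$ shows $\arg(f_I)=\arg z+\psi(\zeta)$, matching the shape guaranteed there.

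Next I would verify the contact conditions (\ref{eq:contact}) directly. Lemma \ref{lem-T} gives $\bZ h=2\Re(\zeta)h_\zeta$, $\overline{\bZ}h=2\Re(\zeta)h_{\overline\zeta}$, $\bT h=0$ for $h=h(\zeta)$, while elementary computations give $\bZ(\arg z)=\tfrac{1}{2i}$, $\overline{\bZ}(\arg z)=-\tfrac{1}{2i}$, $\bT(\arg z)=1$. Substituting these, along with $2|f_I|^2=-2\Re(f)$ and (\ref{eq:psi}) in the form $\psi_\zeta=\tfrac{i}{4\Re\zeta}+\tfrac{(\Im f)_\zeta}{2\Re f}$, into $\tfrac{\bZ f_3}{2|f_I|^2}+\bZ\arg(f_I)$ makes all terms cancel; the second equation is its conjugate, and the third yields $\lambda^\star=\bT\arg(f_I)=1$. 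Hence $F^*\omega^\star=\omega^\star$. Since $f$ is a diffeomorphism and $J_f=\Re^2(f)/\Re^2(\zeta)>0$ (so $J_f^{1/4}$ is smooth), $F$ is a smooth map of $\fH^\star$; it is a bijection because from $(w,s)$ the relation $-|w|^2+is=f_{II}=f(\zeta)$ determines $\zeta$, after which $|z|^2=-\Re\zeta$, $t=\Im\zeta$ and $\arg z$ are recovered consistently (the modulus identity $|w|^2/J_f^{1/2}(\zeta)=-\Re\zeta$ being automatic). Finally $J_F=(\lambda^\star)^2=1>0$, so $F$ is orientation-preserving.

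For the Beltrami coefficient I would write $f_I=z\,g(\zeta)$ with $g(\zeta)=J_f^{1/4}(\zeta)e^{i\psi(\zeta)}$; then $\bZ z=z$, $\overline{\bZ}z=0$ and Lemma \ref{lem-T} give $\bZ f_I=z(g-2|z|^2g_\zeta)$ and $\overline{\bZ}f_I=-2|z|^2z\,g_{\overline\zeta}$. From $\log g=\tfrac14\log J_f+i\psi$ with $\log J_f=2\log(-\Re f)-2\log(-\Re\zeta)$, combined with (\ref{eq:psi}), one obtains $g_\zeta=g\big(\tfrac{f_\zeta}{2\Re f}-\tfrac{1}{2\Re\zeta}\big)$ and $g_{\overline\zeta}=g\,\tfrac{f_{\overline\zeta}}{2\Re f}$. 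Plugging these in and using $|z|^2=-\Re\zeta$, both expressions collapse: $\bZ f_I=z\,\tfrac{\Re\zeta}{\Re f}\,g\,f_\zeta$ and $\overline{\bZ}f_I=z\,\tfrac{\Re\zeta}{\Re f}\,g\,f_{\overline\zeta}$, so $\mu_F=\overline{\bZ}f_I/\bZ f_I=f_{\overline\zeta}/f_\zeta=\mu_f(\zeta)$. Therefore $\|\mu_F\|=\|\mu_f\|\le\tfrac{K-1}{K+1}$ and $F$ is $K$-quasiconformal.

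The argument is almost entirely computational, and I expect the main obstacle to be bookkeeping rather than anything conceptual: one must be scrupulous about the left-half-plane sign conventions ($\Re(\zeta),\Re(f)<0$, $J_f^{1/4}>0$), since it is precisely these signs that produce the cancellations in the contact conditions and in the formulas for $g_\zeta$ and $g_{\overline\zeta}$. The conceptual content is concentrated in the identity $f_{II}=f(\zeta)$; once that is established, Lemmas \ref{lem-T} and \ref{lem:psi} do the rest.
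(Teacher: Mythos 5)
Your proposal is correct and follows essentially the same route as the paper: establish $f_{II}=f(\zeta)$ via the symplectic identity $\Re^2(f)=\Re^2(\zeta)J_f$, verify the contact conditions using Lemma \ref{lem-T} and the defining equation of $\psi$, and deduce $\mu_F=\mu_f(\zeta)$. The only (minor) variation is that you compute $\bZ f_I$ and $\overline{\bZ}f_I$ directly from the explicit formula $f_I=zg(\zeta)$, whereas the paper passes through the contact-condition identity $\overline{\bZ}f_I/\bZ f_I=\overline{\bZ}f_{II}/\bZ f_{II}$ and applies the chain rule to $f_{II}$; your added bijectivity check is a small bonus the paper omits.
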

\begin{proof}
 In the first place, $f_{II}(z,t)=-|z|^2J_{\tilde f}^{1/2}(\zeta)+i\Im(\tilde f)(\zeta)=f(\zeta)$. Moreover,
 $$
 \lambda^\star=\frac{J_{f_{II}}}{4|f_I|^4}=\frac{4\Re^2(\zeta)J_{ f}}{4\Re^2(\zeta)J_{ f}}=1.
 $$
 Now,
 \begin{eqnarray*}
  \frac{\bZ f_3}{2|f_I|^2}+\bZ\arg(f_I)&=&\frac{2\Re(\zeta)(\Im( f(\zeta)))_\zeta}{-2\Re(\zeta)J_{ f}^{1/2}}+\frac{1}{2i}+2\Re(\zeta)\psi_\zeta\\
  &=&-\Re(\zeta)\frac{(\Im( f(\zeta)))_\zeta}{\Re( f)}+\frac{1}{2i}+2\Re(\zeta)\psi_\zeta\\
  &=&0.
 \end{eqnarray*}
 Here, we have used Lemma \ref{lem:psi}.
Similarly,
$$
 \frac{\overline{\bZ} f_3}{2|f_I|^2}+\overline{\bZ}\arg(f_I)=0
$$
and finally,
$$
\frac{\bT f_3}{2|f_I|^2}+\bT\arg(f_I)=\bT\arg(z)=1.
$$
Now, since $f_{II}(z,t)=f(\zeta)$, we have from the chain rule that
\begin{eqnarray*}
&&
\overline\bZ f_{II}=f_\zeta\overline{\bZ}\zeta+f_{\overline{\zeta}}\overline{\bZ\zeta}=-2|z|^2f_{\overline{\zeta}},\\
&&
\bZ f_{II}=f_\zeta\bZ\zeta+f_{\overline{\zeta}}\bZ\overline{\zeta}=-2|z|^2f_\zeta.
\end{eqnarray*}
Therefore,
$$
\frac{\overline{\bZ}f_{II}}{\bZ f_{II}}=\frac{f_{\overline{\zeta}}}{f_\zeta}=\mu_f(\zeta).
$$
On the other hand, recall that due to contact conditions we also have
\begin{eqnarray*}
&&
\mu_F(z,t)=\frac{\overline{\bZ}f_I}{\bZ f_I}=\frac{\overline{\bZ}{\rm Log}f_I}{\bZ{\rm Log}f_I}=\frac{\overline{\bZ}f_{II}}{\bZ f_{II}}=\mu_f(\zeta).
\end{eqnarray*}
This completes the proof.
\end{proof}
\subsection{Examples of lifted symplectic maps}\label{sec:examples}
\subsubsection{Lifting ${\rm SU}(1,1)$} We will use the Lifting Theorem \ref{thm:lift} to lift isometries of $\calL$ to conformal maps of $\fH^*$.  
Let $f:\calL\to\calL$ be an element of ${\rm SU}(1,1)$; for $z\in\calL$, 
$$
f(\zeta)=\frac{a\zeta+ib}{ic\zeta+d},\quad ad+bc=1.
$$
$f$ is symplectic and $\mu_f(z)\equiv 0$ in $\calL$. We have
$$
f_\zeta=f'(\zeta)=\frac{1}{(ic\zeta+d)^2},\quad J_f(\zeta)=\frac{1}{|ic\zeta+d|^4}.
$$
On the other hand,
$$
\Re(f)(\zeta)=\frac{\Re(\zeta)}{|ic\zeta+d|^2},\quad(\Im(f))_\zeta=\frac{f_\zeta}{2i}.
$$
Therefore Equation (\ref{eq:psi}) of Lemma \ref{lem:psi} is written here as
$$
\psi_\zeta=-\frac{c}{2(ic\zeta+d)}.
$$
Let $\zeta=\xi+i\eta$. We examine first the case where $c=0$ ($a=1/d)$; then from Equations \ref{eq:psisys} we obtain
$$
\psi_\xi=\psi_\eta=0.
$$
Thus $\psi(\zeta)\equiv \alpha$, a real constant. For the lifted map $F=(f_I,f_3)$ of Theorem \ref{thm:lift} we then have
$$
f_I(z,t)=|a|z e^{i\alpha},\quad f_3(z,t)=a^2t+ab.
$$
By setting $w=|a|e^{i\alpha}$ and $s=b$ we see that $F$ is the left-translation $L_{(w,s)}$.

When $c\neq 0$, we have the equations
$$
\psi_\xi=\frac{c(c\eta-d)}{c^2\xi^2+(c\eta-d)^2},\quad \psi_\eta=-\frac{c^2\xi}{c^2\xi^2+(c\eta-d)^2}.
$$
By integrating the left equation we find
$$
\psi(\xi,\eta)=\arctan\left(\frac{\xi}{\eta-d/c}\right)+\beta(\eta),
$$ 
where $\beta(\eta)$ is a real function. By differentiating w.r.t. $\eta$ we obtain
$$
\psi_\eta=-\frac{c^2\xi}{c^2\xi^2+(c\eta-d)^2}+\beta'(\eta).
$$
Therefore $\beta(\eta)=\beta$ is a real constant and
$$
\psi(\xi,\eta)=\arctan\left(\frac{\xi}{\eta-d/c}\right)+\beta.
$$
In this manner we obtain an $F=(F_I,f_3)$ of the form (\ref{eq:fSU}).
\subsubsection{Lifting twist  maps}
Let $g(\theta)$ be a smooth (at least $\cC^1$) function, $\theta=\arg(\zeta)$. We consider the map $f:\calL\to\calL$ given by
$$
f(\zeta)=\zeta e^{g(\theta)}.
$$
We have
\begin{eqnarray*}
&&
f_{\overline{\zeta}}=-\frac{g'f}{2i\overline{\zeta}},\quad f_\zeta=\frac{(2i+g')f}{2i\zeta},\\
&&
J_f=e^{2g},\quad \mu_f=\frac{g'}{g'+2i}\frac{\zeta}{\overline{\zeta}}.
\end{eqnarray*}
The map $f$ is symplectic:
$$
\Re^2(f(\zeta))=\Re^2(\zeta)e^{2g}.
$$
Equation (\ref{eq:psi}) is written as
$$
\psi_\zeta=0,
$$
which gives $\psi=c$, a real constant. Therefore the lifted map $F=(f_I,f_3)$ is given by
$$
f_I(z,t)=ze^{g(\arg(-|z|^2+it))/2+ic},\quad f_3(z,t)=te^{g(\arg(-|z|^2+it))}.
$$
In the case where $g(\theta)=k\theta$, $k\in\R_*$ we have
$$
|\mu_f(\zeta)|=\frac{|k|}{\sqrt{k^2+4}},
$$
and $f$ is extremal. We conclude that in this case we also have that the lifted map $F$ where
$$
F(z,t)=\left(ze^{\frac{k}{2}\arg(-|z|^2+it)}e^{ic},\;te^{k\;\arg(-|z|^2+it)}\right),
$$
is extremal. Note that $F$ maps each Heisenberg cylinder $t=\alpha|z|^2$, $\alpha\in\R$, to itself.

\subsubsection{Lifting spiral-stretch maps}
The standard stretch map $\zeta\mapsto\zeta|\zeta|^k$, $k>0$, is not symplectic, so it cannot be lifted to a map of $\fH^\star$. Instead we consider a smooth function $g(\theta)$, $\theta=\arg(\zeta)$, and the map
$$
f(\zeta)=\zeta|\zeta|^ke^{ig(\theta)},
$$
where we also allow the case $k=0$.
We have
\begin{eqnarray*}
&&
f_{\overline{\zeta}}=\frac{(k-g')f}{2\overline{\zeta}},\quad
f_\zeta=\frac{(2+k+g')f}{2\zeta},\\
&&
J_f=(1+k)(1+g')|\zeta|^{2k},\quad \mu_f=\frac{k-g'}{2+k+g'}\frac{\zeta}{\overline{\zeta}}.
\end{eqnarray*}
Straightforward calculations show that $f$ is symplectic if and only if
$$
\frac{1}{1+k}\frac{\cos^2(g+\theta)}{\cos^2\theta}=\frac{d(g+\theta)}{d\theta},
$$
that is, 
$$
g(\theta)=\arctan\left(\frac{\tan\theta}{k+1}+k'\right)-\theta,\quad k'\in\R,
$$
and 
$$
f(\zeta)=|\zeta|^{k+1}e^{i\arctan\left(\frac{\tan\theta}{1+k}+k'\right)}.
$$
Note that
$$
g'(\theta)=\frac{(k+1)(1+\tan^2\theta)}{(k+1)^2+(\tan\theta+k'(k+1))^2}-1.
$$
Setting $\xi=r\cos\theta$ and $\eta=r\sin\theta$, the system (\ref{eq:psisys}) may be written equivalently as
\begin{equation}\label{eq:polar}
\psi_r=\frac{v_r}{2u}-\frac{\tan\theta}{2r},\quad \psi_\theta=\frac{v_\theta}{2u}-\frac{1}{2}.
\end{equation}
Now,
$$
u(r,\theta)=r^{k+1}\cos\left(\arctan\left(\frac{\tan\theta}{k+1}+k'\right)\right),\quad v(r,\theta)=r^{k+1}\sin\left(\arctan\left(\frac{\tan\theta}{k+1}+k'\right)\right).
$$
So, the system (\ref{eq:polar}) reads as
\begin{eqnarray*}
&&
\psi_r=\frac{k'(k+1)}{2r},\\
&&
\psi_\theta=\frac{g'(\theta)}{2}.
\end{eqnarray*}
By integrating the above system we find
$$
\psi(r,\theta)=\frac{k'(k+1)}{2}\log r+\frac{g(\theta)}{2}+c,
$$
where $c$ is a real constant.
From Theorem \ref{thm:lift} we then obtain the quasiconformal (Heisenberg spiral) map constructed in \cite{P},  if $k=0$ and the Heisenberg stretch map constructed in \cite{BFP} (modulo a rotation).


\begin{thebibliography}{ZZ99}

\bibitem{B} A.~Besse,
{\it Einstein Manifolds.} Classics in Mathematics, (reprint of the 1987 edition), Springer-Verlag, Heidelberg, 2008 

\bibitem{BHT} Z. M~Balogh, R. ~ Hoefer-Isenegger, J. ~Tyson, 
Lifts of Lipschitz maps and horizontal fractals in the Heisenberg group. 
{\it Ergodic Theory Dyn. Syst.} 26, No. 3, (2006), 621--651 . 

\bibitem{BFP} Z. M~Balogh, K.~F\"assler, and I. D.~Platis,
Modulus method and radial stretch map in the Heisenberg group.
{\it Ann. Acad. Sci. Fenn.} 38 (2013), 149--180.



\bibitem{CDPT} L.~Capogna, D.~Danielli, S.~Pauls, and 
J.~Tyson, 
{\it An introduction to the Heisenberg group and the sub-Riemannian isoperimetric problem.} 
Progress in Mathematics 259. Birkh\"auser, Basel, 2007.

\bibitem{CT} L.~Capogna,  P. ~Tang, 
 Uniform domains and quasiconformal mappings on the Heisenberg group.  
{\it Manuscr. Math.} 86 No. 3, (1995), 267--281 .



 \bibitem{DT} S.~Dragomir and G.~Tomassini;
{\sl  Differential Geometry and Analysis on {\rm CR} manifolds}.
Progress in Mathematics  246, Birkh\"auser Verlag, Berlin, 2006.
 
 
\bibitem{G} W.~Goldman,
{\sl  Complex hyperbolic geometry}.
 Oxford Mathematical Monographs. Oxford University Press, New York, 1999. 
 
 \bibitem{K} A.~Kor\'anyi,
 Quasiconformal mappings in several complex variables. In {\it Quaderni di Matenatica} Vol. 9, Series edited by Dipartimento di Matematica, Seconda Universit\'a di Napoli,  221--247, Caserta, 2001. (edd. S. Dragomir)
 
 \bibitem{KR1} A.~Kor\'anyi and H. M.~Reimann,
Quasiconformal mappings on the Heisenberg group.
{\it Invent. Math.} 80 (1985),  309--338. 

\bibitem{KR2} A.~Kor\'anyi and H. M.~Reimann,
Foundations for the theory of quasiconformal mappings on the Heisenberg group.
{\it Adv. in Math.} 111 (1995),  1--87. 
 
 \bibitem{KR} A.~Kor\'anyi and H. M.~Reimann,
 Quasiconformal mappings on CR manifolds.
In {\it Conference in honour of E. Vesentini}. Springer Verlag Notes 1422, 59--75, Berlin--Heidelberg--New York, Springer 1988.

\bibitem{M} G. D.~Mostow,
 Strong rigidity of locally symmetric spaces.
Ann. Math. Stud.  78  Princeton University Press, Princeton, NJ., 1973. 

\bibitem{Mou} G.~Moulantzikos,
The Roto-Affine group and a lifting theorem for quasiconformal mappings. {\it M.S. Thesis, University of Crete}. (Supervisor: I.D.Platis) Heraklion, Crete, 2017.

\bibitem{Pa}
P.~Pansu, M\'etriques de Carnot--Carath\'eodory et quasiisom\'etries
              des espaces sym\'etriques de rang un.
{\it Ann. of Math.} 129 (1989), 1--60.

\bibitem{P} I.D.~Platis,
The geometry of complex hyperbolic packs.
{\it Math. Proc.  Camb. Phil. Soc.} 147 (2009), 205--234. 


With E. Falbel:  
 
 \bibitem{P1} I.D.~Platis,
Quasiconformal mappings on the Heisenberg group: An overview.
In:
{\it Handbook of Teichm\"uller Theory} Vol. VI, 375--396, 2016.  (ed. Athanase Papadopoulos)

\bibitem{PS}  I.D.~Platis, L.J.~ Sun,
A K\"ahler structure for the ${\rm PU}(2, 1)$ configuration space of four points in $S^3$.
{\it Preprint}, (2019), arXiv:1906.06658 [math.DG]

 
 
 
 
 \bibitem{T} R.~Timsit,
Geometric construction of quasiconformal mappings in the Heisenberg group
{\it  Conform.   Geom.   Dyn.} 22 (2018),  99--140.
 
 \end{thebibliography}
\end{document}